\newtheorem{introthm}{Theorem}
\newtheorem{thm}{Theorem}[section]
\newtheorem{cor}[thm]{Corollary}
\newtheorem{lemma}[thm]{Lemma}
\newtheorem{prop}[thm]{Proposition}
\newtheorem{conjecture}[thm]{Conjecture}
\newtheorem{question}[thm]{Question}
\theoremstyle{definition}
\newtheorem{defi}[thm]{Definition}
\newtheorem{example}[thm]{Example}
\theoremstyle{remark}
\newtheorem{remark}[thm]{Remark}
\newlist{enumthm}{enumerate}{1}  
\setlist[enumthm,1]{label=\textup{(\roman*)}}
\tikzset{orbitnodes/.style={circle, draw, fill=black,
                          inner sep=0pt, minimum width=4pt}}
\renewcommand{\phi}{\varphi}
\renewcommand{\theta}{\vartheta}
\newcommand{\eps}{\varepsilon}
\renewcommand{\geq}{\geqslant}
\renewcommand{\leq}{\leqslant}
\newcommand{\nbd}{\nobreakdash-\hspace{0pt}}  
\newcommand{\defemph}[1]{\textbf{#1}} 
\newcommand{\reals}{\mathbb{R}}
\newcommand{\compl}{\mathbb{C}}
\newcommand{\nats}{\mathbb{N}}
\newcommand{\GF}[1]{\mathbb{F}_{#1}}
\newcommand{\iso}{\cong}    
\newcommand{\Pow}[1]{2^{#1}}
\DeclarePairedDelimiter{\abs}{\lvert}{\rvert}
\DeclarePairedDelimiter{\erz}{\langle}{\rangle}
\DeclarePairedDelimiter{\skp}{\langle}{\rangle}
\DeclareMathOperator{\id}{Id}
\DeclareMathOperator{\Z}{Z}
\DeclareMathOperator{\GL}{GL}
\DeclareMathOperator{\AGL}{AGL}
\DeclareMathOperator{\Sym}{Sym}
\DeclareMathOperator{\enmo}{End}
\DeclareMathOperator{\Aut}{Aut}
\DeclareMathOperator{\Fix}{Fix}
\DeclareMathOperator{\LinSym}{LinSym}
\DeclareMathOperator{\Irr}{Irr}
\DeclareMathOperator{\mat}{\mathbf{M}}
\DeclareMathOperator{\aff}{aff}
\DeclareMathOperator{\conv}{conv}
\DeclareMathOperator{\diag}{diag}
\DeclareMathOperator{\rk}{rk}
\DeclareMathOperator{\eval}{eval}
\begin{document}
\title{Affine Symmetries of Orbit Polytopes}
\author{Erik Friese}
\email{erik.friese@uni-rostock.de}
\author{Frieder Ladisch}
\thanks{The second author is supported by the DFG, project SCHU 1503/6-1}
\email{frieder.ladisch@uni-rostock.de}
\address{Universität Rostock,
         Institut für Mathematik,
         Ulmenstr.~69, Haus~3,
         18057 Rostock,
         Germany}
\keywords{%
  Orbit polytope, 
  group representation, 
  affine symmetry, 
  representation polytope, 
  permutation polytope
}
\subjclass[2010]{Primary 52B12, Secondary 52B15, 05E15, 20B25, 20C15}
%
\begin{abstract}
  An orbit polytope is the convex hull of an orbit under a finite group
  $G \leq \operatorname{GL}(d,\mathbb{R})$.
  We develop a general theory of possible affine symmetry groups of 
  orbit polytopes. 
  For every group, we define an open and dense set of generic points 
  such that the orbit polytopes of generic points have 
  conjugated affine symmetry groups.
  We prove that the symmetry group of a generic orbit polytope 
  is again $G$  
  if $G$ is itself the affine symmetry group of some
  orbit polytope, or if $G$ is absolutely irreducible. 
  On the other hand, we describe some 
  general cases where the affine symmetry group grows.

  We apply our theory to representation polytopes
  (the convex hull of a finite matrix group)
  and show that their affine symmetries  
  can be computed effectively from a certain character.
  We use this to construct counterexamples to a conjecture
  of Baumeister et~al.\ on permutation polytopes
  [Advances in Math. 222 (2009), 431--452, Conjecture~5.4].
\end{abstract}
\maketitle

\section{Introduction}
Let $G\leq \GL(d,\reals)$ be a finite group.
An \defemph{orbit polytope} of $G$ is
defined as the convex hull of the orbit $Gv$
of some point $v\in \reals^d$. 
We denote it by
\[ P(G,v)= \conv\{ gv \mid g \in G\}.
\]

Orbit polytopes have been studied by a number of 
authors~\cite{Babai77,barvivershik88,EllisHS06,onn93,robertson84pas},
especially 
orbit polytopes of finite reflection groups,
which are often called  generalized permutahedra, or simply 
permutahedra~\cite{borovikmirrors,Hohlweg12,HohlwegLT11,mccarthyOZZ03,zobin94}.
Let us mention here that the classical 
\emph{Wythoff construction}~\cite{Coxeter34,coxeterpoly,CoxeterRCP91}
basically consists in taking orbits 
under a reflection group to construct polytopes
or tesselations of a sphere.
In particular,
Coxeter~\cite{Coxeter34} has shown that several uniform polytopes can be obtained 
as orbit polytopes of (finite) reflection groups by choosing a 
suitable starting point~$v$ (see also~\cite{MoodyPatera95}).
In the language of Sanyal, Sottile and Sturmfels~\cite{sanyaletal11},
orbit polytopes are polytopal \emph{orbitopes}.
(An orbitope is the convex hull of an orbit of a compact group,
 not necessarily finite.)

In this paper we study the affine symmetry groups of
orbit polytopes.
An \defemph{affine symmetry}
of a polytope $P\subset \reals^d$
is a bijection of $P$ which is the restriction of an affine map
$\reals^d \to \reals^d$.
We write $\AGL(P)$ for the affine symmetry group of a polytope $P$.

Clearly, the affine symmetry group of an orbit polytope $P(G,v)$ 
always contains the symmetries induced by $G$.
Depending on the group and on the point $v$,
there may be additional symmetries or not. 
In particular, certain symmetry groups imply additional
symmetries for all orbit polytopes. 
In this paper we develop a general theory to
explain this phenomenon.
We begin by looking at some very simple examples.

\subsection{Illustrating examples}
Let 
$ G=\erz{t,s }\iso D_4$, 
the dihedral group\footnote{
  In this paper, we follow the convention of geometers
  and write $D_n$ for the group of the $n$-gon with $2n$ elements.
  Most group theorists write $D_{2n}$ instead.
  }   of order $8$.
Here $t$ denotes a counterclockwise rotation by a right angle,
and $s$ a reflection (in the plane).
Figure~\ref{fig:d4genericopt} shows two ``generic'' orbit polytopes.
Their affine symmetry group is only the group $G$ itself.
In contrast, the orbit polytopes in Figure~\ref{fig:d4notgeneric} 
are atypical:
The first one has a larger affine
symmetry group, namely the dihedral group $D_8$ of order $16$.
The other one has affine symmetry group $D_4$,
but it has fewer vertices than the typical
orbit polytope.
Of course, this happens because the stabilizer of $v$
is nontrivial.
Finally, if we take for $v$ the fixed point of the rotation,
then we get a degenerate orbit polytope of dimension zero.
\begin{figure}
  \begin{tikzpicture}[scale=0.7]
    \newcommand{\startangle}{10}
    \newcommand{\radius}{2}
    \filldraw[fill=red!50]
      (\startangle:\radius)  node[label= 
      \startangle:{$v$},orbitnodes]{}
      \foreach \i/\label in {1/t,2/t^2,3/t^3}{
        -- (90*\i - \startangle:\radius) 
            node[label= 90*\i - \startangle : {$\label 
            sv$},orbitnodes] {}
        -- (90*\i + \startangle:\radius) 
            node[label=90*\i + \startangle: $\label v$, orbitnodes] {}
      }
      -- (-\startangle:\radius) 
         node[label= -\startangle:{$sv$},orbitnodes]{}
      -- cycle;
    \begin{scope}[xshift=9cm]
      \renewcommand{\startangle}{29}
      \filldraw[fill=red!50]
        (\startangle:\radius)  node[label= 
        \startangle:{$v$},orbitnodes]{}
        \foreach \i/\label in {1/t,2/t^2,3/t^3}{
          -- (90*\i - \startangle:\radius) 
              node[label= 90*\i - \startangle : {$\label 
              sv$},orbitnodes] {}
          -- (90*\i + \startangle:\radius) 
              node[label=90*\i + \startangle: $\label v$, orbitnodes] 
              {}
        }
        -- (-\startangle:\radius) 
           node[label= -\startangle:{$sv$},orbitnodes]{}
        -- cycle;
    \end{scope}
  \end{tikzpicture}
  \caption{Two typical orbit polytopes of 
           $D_4=\erz{t,s}$, the group of the square.
           Both have no additional 
           affine symmetries.}
  \label{fig:d4genericopt}
\end{figure}
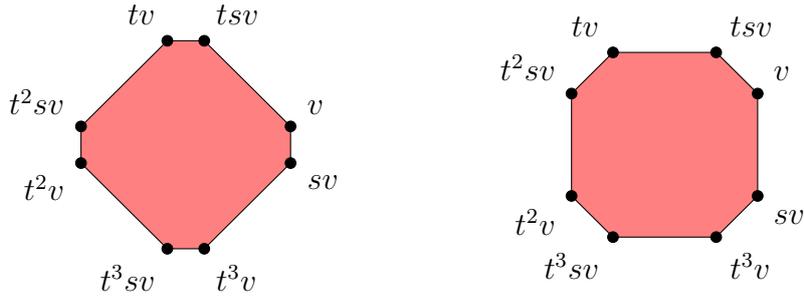
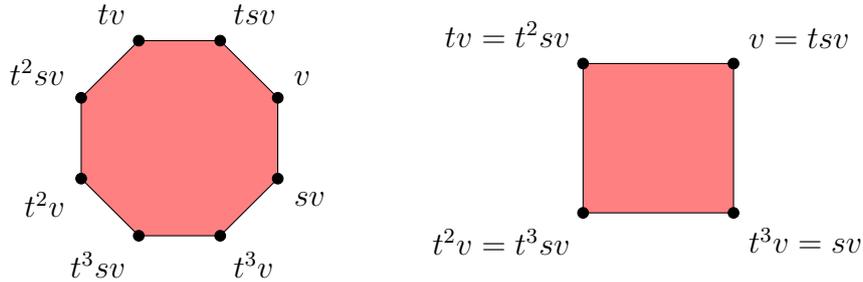
\begin{figure}
  \begin{tikzpicture}[scale=0.7]
    \newcommand{\radius}{2}
    \begin{scope}
          \newcommand{\startangle}{22.5}
      \filldraw[fill=red!50]
        (\startangle:\radius)  node[label= 
        \startangle:{$v$},orbitnodes]{}
        \foreach \i/\label in {1/t,2/t^2,3/t^3}{
          -- (90*\i - \startangle:\radius) 
              node[label= 90*\i - \startangle : {$\label 
              sv$},orbitnodes] {}
          -- (90*\i + \startangle:\radius) 
              node[label=90*\i + \startangle: $\label v$, orbitnodes] 
              {}
        }
        -- (-\startangle:\radius) 
           node[label= -\startangle:{$sv$},orbitnodes]{}
        -- cycle;
    \end{scope}
    \begin{scope}[xshift=9cm]
      \filldraw[fill=red!50]
         (45:\radius)  node[label= 45:{$v=tsv$},orbitnodes]{}
         -- (135:\radius) node[label= 110:{$tv=t^2sv$},orbitnodes]{}
         -- (225:\radius) node[label= 
         -110:{$t^2v=t^3sv$},orbitnodes]{}
         -- (-45:\radius) 
         node[label=-45:{$t^3v=sv$},orbitnodes]{}            
         -- cycle;    
    \end{scope}
  \end{tikzpicture}
  \caption{Two untypical orbit polytopes of
           $D_4=\erz{t,s}$:
           The polytope on the left has additional affine symmetries,
           that on the right fewer vertices.}
  \label{fig:d4notgeneric}
\end{figure}

In general, given a finite group $G\leq \GL(d,\reals)$,
there may be three kinds of
``exceptional'' points:
First, there may be points such that the orbit polytope
$P(G,v)$ is not full-dimensional.
Let us call a point $v\in \reals^d$
a \emph{generating point} (for $G$)
if $\reals^d=\erz{gv\mid g\in G}$.
If there exists a generating point,
then the set of non-generating points  
is the zero set of some
non-zero polynomials, as is not difficult to see (Lemma~\ref{l:fulldim} below).
In the example with $G=D_4$, only the origin does not generate a 
full-dimensional
orbit polytope.

Second, there may be points $v$ which are stabilized by some 
non-identity elements of $G$.
The set of such points
is a finite union of proper affine subspaces, 
since the fixed space of each $g\in G\setminus \{1\}$ is
a proper subspace. 

Finally, there may be points such that the corresponding 
orbit polytope
has more symmetries than a ``generic'' orbit polytope.
The first aim of this paper is to make this statement more
precise (see Theorem~\ref{t:genericsymgroup}). 
In particular, it is not obvious in general
that ``almost all'' orbit polytopes have the same symmetry group,
and that the other ones usually have more symmetries.
For example, it is known that in general orbit polytopes of the 
same group may have quite different face lattices,
even for ``generic'' points~\cite{onn93}.
\begin{figure}
  \begin{tikzpicture}[scale=0.7]
    \clip (-4.5,-4.5) rectangle (4.5,4.5);
    \draw[very thin,color=gray] (-4.5,-4.5) grid (4.5,4.5);
    \draw[very thin, white] (180:8cm) --(0:8cm);
    \draw[very thin, white] (270:8cm) --(90:8cm);
    \foreach \i in {0,1,2,3}{
      \draw[red, thick, densely dashed] (45*\i +180:8cm) -- (45*\i:8cm);
    } 
    \foreach \i in {0,1,2,3}{
      \draw[blue, thick] (45*\i+ 180 +22.5:8cm) -- (45*\i +22.5:8cm);
    }
    \filldraw[fill=black] (0,0) circle (2pt);
  \end{tikzpicture}
  \caption{Exceptional points for $D_4$: Points with trivial
     stabilizer (\textcolor{red}{dashed} lines) 
     or additional symmetries (\textcolor{blue}{solid} lines).
     }
  \label{fig:d4genpoints}
\end{figure}
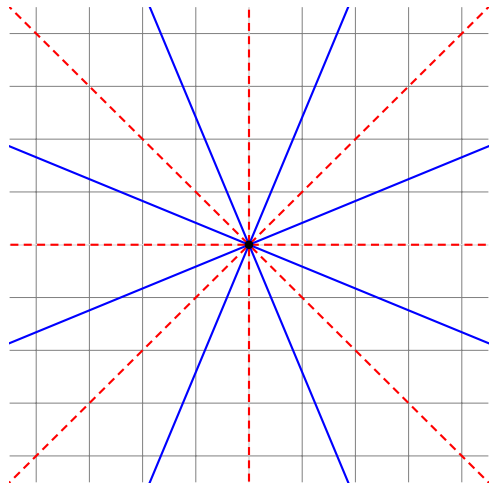

In our example, the symmetry group of ``almost every''
orbit polytope is again $G$. 
This is not always the case. 
For a simple example,
let $G=\erz{t}$, where $t$ is a rotation
by
a right angle in $2$-dimensional space.
Then every orbit polytope is a square, 
and the affine symmetry group is always isomorphic
to 
the dihedral group 
$D_4$ of order $8$ (Figure~\ref{fig:rot4}).
(Again, there is the trivial exception
 of the orbit of the fixed point of~$t$.)
From the first example, we know that if we take an orbit polytope
of this new symmetry group, then its affine symmetry group
does no longer grow for ``almost all'' points  $v$.
This will be seen to be a general phenomenon
(Corollary~\ref{c:symgr_closed}).

\begin{figure}
  \begin{tikzpicture}[scale=0.7]
    \newcommand{\startangle}{45}
    \newcommand{\radius}{2}
    \filldraw[fill=red!50]
      (\startangle:\radius)  node[label= 
      \startangle:{$v$},orbitnodes]{}
      \foreach \i/\label in {1/t,2/t^2,3/t^3}{
        -- (90*\i + \startangle:\radius) 
            node[label=90*\i + \startangle: $\label v$, orbitnodes] {}
      }
      -- cycle;
    \begin{scope}[xshift=8cm]
        \renewcommand{\startangle}{150}
        \renewcommand{\radius}{1.6}
        \filldraw[fill=red!50]
          (\startangle:\radius)  node[label= 
          \startangle:{$v$},orbitnodes]{}
          \foreach \i/\label in {1/t,2/t^2,3/t^3}{
             -- (90*\i + \startangle:\radius) 
                 node[label=90*\i + \startangle: $\label v$, 
                 orbitnodes] {}
          }
         -- cycle;
    \end{scope}
  \end{tikzpicture}
    \caption{Two orbit polytopes of 
             the group $G=\erz{t}$
             of rotations preserving a square.
             All nontrivial orbit polytopes 
             are affinely equivalent
             and have additional symmetries.
            }
    \label{fig:rot4}  
\end{figure}
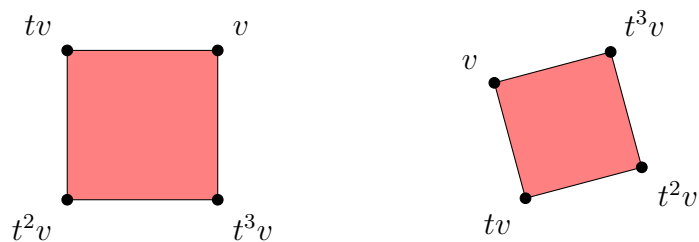
We also see that the different orbit polytopes of $G=\erz{t}$, 
as $v$ varies, have not 
exactly \emph{the same} symmetries 
(we have reflections at different axes),
but the resulting groups
are conjugate in the group of all affine isomorphisms.
Actually, more is true: If we identify the vertices
of an orbit polytope with the corresponding group elements,
then the affine symmetry groups of all 
orbit polytopes induce the same permutations on $G$.
Again, this is a general phenomenon
(Theorems~\ref{t:genericsymgroup} and~\ref{t:genericsimilar}).

\subsection{Affine symmetries of orbit polytopes: results}
For a given finite group $G\leq \GL(d,\reals)$ such that
at least one orbit polytope of $G$ is full-dimensional,
we define a set of \emph{generic points}
(Section~\ref{sec:generic}).
If $v$ is generic, we call
$P(G,v)$ a \emph{generic orbit polytope}.
We prove the following:
\begin{introthm}\label{introt:generic}
The set of generic points is the complement of the zero set of certain
non-zero polynomials.
The affine symmetry groups of all the generic
orbit polytopes are conjugate in $\GL(d, \reals)$.
Moreover, the affine symmetry group of any full-dimensional
orbit polytope $P(G,v)$
contains a conjugate of the affine symmetry group of a generic orbit 
polytope.
\end{introthm}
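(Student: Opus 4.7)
My plan is to parametrize the affine symmetries of $P(G,v)$ by permutations of the vertex set and then identify those permutations that arise for \emph{every} choice of $v$. For each $\sigma\in\Sym(G)$, let
\[
  \Omega_\sigma := \{v\in\reals^d : gv\mapsto\sigma(g)v \text{ extends to an affine symmetry of } P(G,v)\}.
\]
Such an extension exists iff every affine dependence $\sum_g\lambda_g\,gv=0$ with $\sum_g\lambda_g=0$ also satisfies $\sum_g\lambda_g\,\sigma(g)v=0$. Writing this condition out in coordinates of $v$ shows that $\Omega_\sigma$ is Zariski closed in $\reals^d$. Put $\Sigma := \{\sigma\in\Sym(G) : \Omega_\sigma = \reals^d\}$; a short verification confirms that $\Sigma$ is a subgroup of $\Sym(G)$ and contains the left-regular image of $G$.

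Since $\reals^d$ is irreducible as an algebraic variety, for each $\sigma\notin\Sigma$ the set $\Omega_\sigma$ is a proper algebraic subset, hence the zero set of finitely many non-zero polynomials. I then declare $v$ to be \emph{generic} when it is a generating point (applying Lemma~\ref{l:fulldim}), has trivial $G$-stabilizer (excluding a finite union of proper affine subspaces), and lies outside every $\Omega_\sigma$ with $\sigma\notin\Sigma$. This exhibits the set of generic points as the non-vanishing locus of finitely many non-zero polynomials, giving the first assertion. For such $v$, $\AGL(P(G,v))$ acts faithfully on the $|G|$ vertices and, by construction, its image in $\Sym(G)$ is exactly $\Sigma$.

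To compare two generic orbit polytopes $P(G,v)$ and $P(G,w)$, the idea is to exploit the $G$-equivariant orbit maps $\mu_v,\mu_w\colon \reals G\to\reals^d$, $a\mapsto av$. Each $\mu_v$ is surjective with kernel $\ann(v)$ and intertwines the basis-permutation action of $\Sigma$ on $\reals G$ with the affine $\Sigma$-action on $P(G,v)$; in particular $\ann(v)$ is a $\Sigma$-stable subspace. The two quotients $\reals G/\ann(v)$ and $\reals G/\ann(w)$ thus realize the \emph{same} abstract $G$-module $\reals^d$ equipped with compatible $\Sigma$-structures, so a $(G,\Sigma)$-equivariant isomorphism $A\in\GL(d,\reals)$ between them should exist, and this $A$ conjugates $\AGL(P(G,v))$ onto $\AGL(P(G,w))$. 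For the third claim, let $v$ be any point with $P(G,v)$ full-dimensional: because $\Omega_\sigma = \reals^d$ for every $\sigma\in\Sigma$, each such $\sigma$ automatically induces an affine symmetry of $P(G,v)$, yielding a homomorphism $\Sigma\to\AGL(P(G,v))$; a variant of the orbit-map comparison then shows the image is $\GL$-conjugate to $\AGL(P(G,v_{\mathrm{gen}}))$, after appropriate care with the stabilizer quotient.

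The main obstacle I foresee is the explicit construction of the conjugating $A\in\GL(d,\reals)$. The naive assignment sending a basis $g_1v,\ldots,g_dv$ to $g_1w,\ldots,g_dw$ does \emph{not} respect linear dependencies in general, because $\ann(v)\neq\ann(w)$ as submodules of $\reals G$; equivalently, when one expands $\sigma(g_i)v$ in the basis $g_jv$ the coefficients depend on $v$. One must instead build $A$ from an abstract $\Sigma$-module isomorphism $\reals G/\ann(v)\iso\reals G/\ann(w)$ that is simultaneously a $G$-module isomorphism; verifying the existence of such a bi-equivariant map---and adapting it to the containment claim where non-trivial stabilizers may intervene---is where the bulk of the technical work lies.
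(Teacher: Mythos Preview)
Your overall architecture is sound, but there are two concrete gaps, one of which is fatal as written.

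\textbf{First gap: $\Omega_\sigma$ is not Zariski closed in $\reals^d$.}
Your assertion that ``writing the condition out in coordinates shows $\Omega_\sigma$ is Zariski closed'' fails once the rank of the orbit matrix drops. Take $G=(\mathbb{Z}/2)^2=\{1,a,b,ab\}$ acting on $\reals^2$ by $a=\diag(-1,1)$, $b=\diag(1,-1)$, and let $\sigma=(1\;ab)$. For every generating $v=(x,y)$ with $xy\neq 0$ the matrix
\[
  A=\begin{pmatrix}0 & -x/y\\ -y/x & 0\end{pmatrix}
\]
sends $gv\mapsto\sigma(g)v$, so all such $v$ lie in $\Omega_\sigma$. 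But for $v=(x,0)$ the orbit collapses to two points with $v=bv$ and $av=abv$; since $\sigma(1)v=abv\neq v=\sigma(b)v$, no affine map realises $\sigma$, so $(x,0)\notin\Omega_\sigma$. Thus $\Omega_\sigma=\{xy\neq 0\}\cup\{0\}$, which is not closed. With your definition $\Sigma=\{\sigma:\Omega_\sigma=\reals^d\}$ this $\sigma$ lies outside $\Sigma$, yet $\Omega_\sigma$ contains every generating point, so your ``generic'' set is empty. The fix is to restrict to generating points from the start: on that open set the orbit matrix has constant rank $d$, and the condition becomes the vanishing of the $(d{+}1)$-minors of the stacked matrix $\begin{pmatrix}V_v\\ V_vP(\sigma)\end{pmatrix}$, which \emph{is} relatively closed. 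The paper does exactly this by passing to the function field $\reals(X)$ and using the invertibility of $Q(X)$.

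\textbf{Second gap: the conjugacy step.}
You correctly identify that the crux is producing a $\Sigma$-equivariant isomorphism $\reals G/\ann(v)\cong\reals G/\ann(w)$, but you offer no mechanism for it; knowing both quotients are isomorphic as $G$-modules does not give a $\Sigma$-isomorphism. The paper's key idea, which you are missing, is a character argument: one has a representation $D_X\colon\Sigma\to\GL(d,\reals(X))$ over the function field, and evaluation at any generating $v$ yields $D_v$. The trace $\chi_X(\sigma)=\tr D_X(\sigma)$ lies in $\reals(X)$, but character values of a finite group are algebraic integers (sums of roots of unity), hence $\chi_X(\sigma)\in\reals$ is constant. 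Therefore $\chi_{D_v}=\chi_X=\chi_{D_w}$ for all generating $v,w$, and representations with equal characters over a field of characteristic zero are similar. This simultaneously gives the conjugacy of the generic symmetry groups and the injectivity of the evaluation map needed for the containment statement.
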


In the examples above, the affine symmetry group 
of a generic orbit polytope
has order $8$ in both cases.
In the case of $G=\erz{t}\iso C_4$,
every point except the fixed point of $t$ is generic.
In the case of $D_4$, the non-generic points 
are the union of eight lines through the origin
(Figure~\ref{fig:d4genpoints}).

We should also mention that the exceptional points
are not necessarily a finite union of proper subspaces,
as is the case in our simple examples.

In the general case, it follows that every finite group 
$G\leq \GL(d,\reals)$
defines a unique conjugacy class of subgroups of $\GL(d,\reals)$
containing the groups
$\widehat{G} = \AGL(P(G,v))$ for $v$ generic. 
Clearly, $P(G,v)= P(\widehat{G},v)$, but
if $\abs{G} < \abs{\widehat{G}}$,
then $v$ has nontrivial stabilizer in $\widehat{G}$
and thus $v$ is not generic for $\widehat{G}$.
However, we have the following:
\begin{introthm}\label{introt:genclose}
  Let $\widehat{G}= \AGL(P(G,v))$ be the affine symmetry group
  of the full-dimensional orbit polytope $P(G,v)$.
  If $w$ is generic for $\widehat{G}$, then
  $\AGL(P(\widehat{G},w)) = \widehat{G}$.
\end{introthm}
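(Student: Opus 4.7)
The plan is to sandwich $\AGL(P(\widehat{G}, w))$ between $\widehat{G}$ and itself, using Theorem~\ref{introt:generic} applied to the group $\widehat{G}$ for the non-trivial inclusion.

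The first step is to verify that $P(\widehat{G}, v) = P(G, v)$. One inclusion holds because $G \leq \widehat{G}$; the reverse holds because every element of $\widehat{G}$ maps $P(G,v)$ into itself as an affine symmetry, so $\widehat{G} v \subseteq P(G,v)$, and taking convex hulls yields $P(\widehat{G}, v) \subseteq P(G,v)$. In particular $P(\widehat{G}, v)$ is full-dimensional, so $v$ is a generating point for $\widehat{G}$, and Theorem~\ref{introt:generic} is available for $\widehat{G}$.

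Now set $H := \AGL(P(\widehat{G}, w))$ for the fixed generic $w$. The inclusion $\widehat{G} \leq H$ is immediate, since every element of $\widehat{G} \leq \GL(d,\reals)$ permutes the orbit $\widehat{G} w$ by a linear, hence affine, map. For the reverse inclusion I would apply the last part of Theorem~\ref{introt:generic} to the full-dimensional orbit polytope $P(\widehat{G}, v)$: its affine symmetry group must contain a $\GL(d,\reals)$\nbd conjugate of the generic symmetry group $H$. But that affine symmetry group is $\AGL(P(G,v)) = \widehat{G}$ by the hypothesis of the theorem, combined with the equality $P(\widehat{G},v) = P(G,v)$ from the previous step. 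Hence $\widehat{G}$ contains a $\GL(d,\reals)$\nbd conjugate of $H$, which gives $\abs{H} \leq \abs{\widehat{G}}$. Together with $\widehat{G} \leq H$ this forces $H = \widehat{G}$.

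I do not anticipate any real obstacle: the argument is a short bootstrapping once Theorem~\ref{introt:generic} is in hand. The only bookkeeping point is the identification $P(\widehat{G}, v) = P(G, v)$, which is exactly what allows the hypothesis on $\AGL(P(G,v))$ to be carried across to the orbit polytope of the larger group $\widehat{G}$, together with the trivial observation that $\GL(d,\reals)$\nbd conjugation preserves group order.
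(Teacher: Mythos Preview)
Your proof is correct and follows essentially the same route as the paper: the paper's Corollary~\ref{c:symgr_closed} also shows $\widehat{G}v = Gv$, then uses Proposition~\ref{p:genericsym_evalhom} and Theorem~\ref{t:genericsimilar} (the ingredients behind the ``moreover'' clause of Theorem~\ref{introt:generic}) to get that $\GL(\widehat{G}w)$ injects into $\GL(\widehat{G}v)=\widehat{G}$, and then finishes with the trivial inclusion $\widehat{G}\leq \GL(\widehat{G}w)$. You simply invoke Theorem~\ref{introt:generic} as a black box rather than its components, which is perfectly fine.
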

Thus we have some sort of closure operator
on the conjugacy classes of finite subgroups of $\GL(d,\reals)$
generating full-dimensional orbit polytopes.
We call a group $G$ \emph{generically closed} if $\AGL(P(G,v))=G$
for all generic $v$.
Thus the symmetry group of a full-dimensional orbit polytope
is generically closed.

If a group is not generically closed,
every full-dimensional orbit polytope has additional affine
symmetries,
as in the example $G\iso C_4$ above.
Naturally, this leads to the problem of characterizing 
generically closed groups. 

More generally, we may begin with an abstract finite group $G$,
and consider various representations
$D\colon G\to \GL(d,\reals)$.
We will see (Theorem~\ref{t:mod_fdpolytop})
that there are only finitely
 many similarity classes of representations such that the space
 contains full-dimensional orbit polytopes of $D(G)$.
We may ask: for which of these (faithful) representations 
of the given group
is the image $D(G)$ generically closed?
\begin{introthm}\label{introt:absirr}
If $D\colon G\to \GL(d,\reals)$ is absolutely irreducible,
then a generic orbit polytope has only affine symmetry group
$D(G)$.
\end{introthm}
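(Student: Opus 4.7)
Since $D$ is absolutely irreducible and we may take it to be nontrivial (else the orbit polytope is a point), $\sum_{g \in G} D(g) = 0$, so the centroid of every orbit lies at the origin and $\AGL(P(G,v)) = \LinSym(P(G,v))$. The plan is to show that, for $v$ in a Zariski-open dense subset of $\reals^d$, the only linear symmetry of $P(G,v)$ fixing $v$ is the identity; the conclusion will then follow. Fix such a $v$; in particular, its $G$-stabilizer is trivial. Any $A \in \LinSym(P(G,v))$ satisfies $Av = D(g_0)v$ for a unique $g_0 \in G$, so, setting $B := D(g_0)^{-1}A$, we are reduced to proving that any $B \in \LinSym(P(G,v))$ with $Bv = v$ equals $I$.

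Such a $B$ is determined by the permutation $\tau\colon G \to G$ with $\tau(1) = 1$ and $B(D(g)v) = D(\tau(g))v$, since $Gv$ spans $\reals^d$ by irreducibility. The existence of $B$ for a given pair $(v, \tau)$ amounts to $\tau$ respecting the linear relations among the orbit points. Algebraically, let $\pi\colon \reals[G] \to \mat_d(\reals)$ be the algebra map $g \mapsto D(g)$, which is surjective by absolute irreducibility, and let $\sigma_\tau\colon \reals[G] \to \reals[G]$ denote the $\reals$-linear bijection extending $g \mapsto \tau(g)$. Writing $K_v := \ker(X \mapsto \pi(X)v)$, the condition becomes $\sigma_\tau(K_v) = K_v$, and I set $V_\tau := \{v \in \reals^d : \sigma_\tau(K_v) = K_v\}$, which is Zariski-closed.

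The crux is that $V_\tau \subsetneq \reals^d$ for every $\tau \neq \mathrm{id}$ with $\tau(1) = 1$. Assuming $V_\tau = \reals^d$, the map $\sigma_\tau$ preserves $\bigcap_v K_v = \ker \pi$ and so descends to a linear bijection $\bar\sigma_\tau\colon \mat_d(\reals) \to \mat_d(\reals)$ with $\bar\sigma_\tau(D(g)) = D(\tau(g))$, which moreover preserves $R_v := \{M \in \mat_d(\reals) : Mv = 0\}$ for every $v$. I would then prove the linear-algebra lemma that every such bijection has the form $T(M) = AM$ for some $A \in \GL(d,\reals)$; I expect this to be the main technical step, via first specializing to $v = e_m$ (so that the $m$-th column of $T(M)$ is a linear function $A_m(M_{\cdot m})$ of $M$'s $m$-th column alone) and then to $v = e_i + e_j$ (which forces $A_i = A_j$). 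Applied to $\bar\sigma_\tau$, the lemma yields $A D(g) = D(\tau(g))$; setting $g = 1$ gives $A = I$, whence $D \circ \tau = D$, and, after passing to $D(G)$ to ensure faithfulness, $\tau = \mathrm{id}$, a contradiction.

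Consequently, $\reals^d \setminus \bigcup_{\tau \neq \mathrm{id}} V_\tau$ is a Zariski-open dense set, and for $v$ therein with trivial stabilizer we obtain $\LinSym(P(G,v)) = D(G)$. Theorem~A then extends this equality to every generic $v$: all generic affine symmetry groups are $\GL(d,\reals)$-conjugate, have the same cardinality, and each contains $D(G)$, forcing equality with $D(G)$.
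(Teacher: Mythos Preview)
Your proof is correct and takes a genuinely different route from the paper's.

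The paper's argument (Theorem~\ref{t:absirr_closed}) leverages the machinery of Section~\ref{sec:gensymgroup}: by Theorem~\ref{t:genericsimilar}, the representations $D_v$ and $D_w$ of the generic symmetry group $\LinSym((gX)_g)$ are similar via some matrix $S$; since these representations restrict to the given representation on $G$, the matrix $S$ centralizes $G$, and absolute irreducibility (via Schur's lemma) forces $S$ to be scalar.  Hence the image $\hat{G}=\GL(Gv)$ is literally the \emph{same} subgroup of $\GL(d,\reals)$ for every generic $v$, and choosing a point generic for both $G$ and $\hat{G}$ yields $\hat{G}=G$.

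Your approach instead uses absolute irreducibility through Burnside's theorem (surjectivity of $\pi\colon \reals G\to \mat_d(\reals)$): this lets you push a hypothetical universal symmetry $\sigma_\tau$ down to a linear bijection of $\mat_d(\reals)$ preserving every maximal left ideal $R_v=\{M:Mv=0\}$, and your lemma identifies such maps as left multiplications $M\mapsto AM$.  The argument then finishes directly.  This is more hands-on and avoids the character-theoretic input behind Theorem~\ref{t:genericsimilar}, at the price of the auxiliary lemma (whose sketch is correct: the $e_m$ case gives column-wise maps $A_m$, and $e_i+e_j$ forces $A_i=A_j$).  Two small remarks: in the absolutely irreducible setting every nonzero vector is generating, so your sets $V_\tau$ are genuinely Zariski-closed via the vanishing of the $(d{+}1)$-minors of the stacked matrix $\bigl(\begin{smallmatrix}V(v)\\ V(v)P_\tau\end{smallmatrix}\bigr)$; and your open dense set in fact \emph{coincides} with the paper's set of generic points (since you have shown $\LinSym((gX)_g)$ reduces to left translations), so the final appeal to Theorem~\ref{introt:generic} is a convenience rather than a necessity.
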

For every group of order $\geq 3$, there are representations
such that $D(G)$ is not generically closed
(for example, the regular representation yields a
 simplex with $\abs{G}$ vertices as orbit polytope),
but there may be no representations such that
$D(G)$ is generically closed.
For example, abelian groups containing elements
of order greater than $2$ are never generically closed
(see Proposition~\ref{p:reppol_bigsym} 
 and Corollary~\ref{c:realidealgroups}).

Thus we may ask for which groups there is a faithful
representation at all such that $D(G)$
is generically closed.
This is equivalent to a question of Babai~\cite{Babai77}, namely,
which groups are isomorphic to the
affine symmetry group of an orbit polytope.
(Babai~\cite{Babai77} classified groups that are isomorphic
to the orthogonal symmetry group
of an orbit polytope.)
For example, it follows from
Theorem~\ref{introt:elab2} below 
that every elementary abelian $2$\nbd group of order
$\neq 4$, $8$, $16$ is isomorphic to the affine symmetry group
of one of its orbit polytopes.
On the other hand, the elementary abelian groups 
of orders $4$, $8$ and $16$
are not affine symmetry groups of orbit polytopes.
These groups are in fact the only groups
which are isomorphic to the orthogonal symmetry group
of an orbit polytope, but not to the affine symmetry group
of an orbit polytope.
This was posed as a conjecture in the first version of this paper.
Since the submission of the first version, we have found a proof
of this conjecture (and thus an answer to Babai's question),
but this will appear elsewhere.

Studying the different possible orbit polytopes of a fixed group $G$
is related to McMullen's theory of realizations of abstract regular
polytopes~\cite{mcmullen89,mcmullen11,mcmullen14,mcmullenmonson03}. 
For a given finite group $G$ and a subgroup $H\leq G$,
McMullen studies congruence classes of orbit polytopes
of $G$ such that $H$ fixes a vertex.
The group $G$ is usually assumed to be the automorphism group of 
an abstract regular polytope~\cite{mcmullenschulte02_arp}
and $H$ a stabilizer of a vertex, 
and then the orbit polytopes are called realizations of the abstract
regular polytope.
However, most of the arguments are actually valid for 
an arbitrary group $G$ and subgroup $H$.
The congruence classes of such orbit polytopes 
form a pointed convex cone, the realization cone.
Since we consider orbit polytopes up to a certain affine equivalence
(see Definition~\ref{defi:affgequiv}),
we further identify orbit polytopes in this cone.
For example, the interior of the realization cone consists of 
non-congruent simplices, but these are all affinely equivalent.

\subsection{Representation polytopes: results}
An interesting class of orbit polytopes 
which have additional affine symmetries 
are the representation polytopes.
A representation polytope is 
defined as the convex hull of $D(G)$,
where $D\colon G\to \GL(d,\reals)$
is a representation of an abstract finite group $G$.
If the image group consists of permutation matrices,
the polytope is called a permutation polytope.
A well-known example is the celebrated Birkhoff polytope
of doubly stochastic matrices (also known as assignment polytope),
which is the convex hull of \emph{all} permutation matrices
of a fixed dimension.
Permutation polytopes and some other special classes
of representation polytopes have also been studied by a number of 
people~\cite{BHNP09,guralnickperkinson06,hofmannneeb12,McCarthyOSZ02}.

Here we study representation polytopes as special cases of orbit 
polytopes. 
Representation  polytopes usually have a big group of affine
symmetries
(with the notable exception of elementary abelian $2$\nbd groups, 
 see below).
The permutations of the vertices 
induced by the affine symmetry group of a representation polytope
can be  computed from a certain character.
To define this character, we use the following notation:
For a representation $D\colon G \to \GL(d,\reals)$,
we write $\Irr D$ for the set of irreducible (complex)
characters of $G$ which occur in the character of $D$.
Then we have:
\begin{introthm}\label{introt:repsyms}
  Let $D\colon G\to \GL(d,\reals)$ be a representation
  and set
  \[ \gamma = \sum_{\chi\in \Irr D} \chi(1)\chi .
  \]
  Let $\pi$ be a permutation of $G$. 
  Then there is an affine symmetry of the corresponding
  representation polytope $P(D) = P(G, I)$
  sending $D(g)$ to $D(\pi(g))$ if and only if
  \[ \gamma( \pi(g)^{-1}\pi(h)) = \gamma(g^{-1}h)
     \quad \text{for all} \quad
     g, \, h \in G.
  \]
\end{introthm}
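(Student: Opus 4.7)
The plan is to recast the theorem as an equality of symmetric bilinear forms on the group algebra $\reals[G]$. Define $B_\gamma$ on $\reals[G]$ by bilinear extension of $B_\gamma([g],[h]) := \gamma(g^{-1}h)$. Since $D$ is a real representation, the set $\Irr D$ is closed under complex conjugation, so $\gamma$ is real-valued and satisfies $\gamma(x^{-1}) = \gamma(x)$; in particular $B_\gamma$ is symmetric. The right-hand side condition in the theorem says exactly that the permutation $\pi$, extended linearly to $\reals[G]$, preserves $B_\gamma$.

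Let $A \subseteq \reals^{d\times d}$ denote the linear span of $D(G)$, which is a semisimple subalgebra, and let $f\colon \reals[G] \to A$ be the surjection $[g]\mapsto D(g)$ with kernel~$K$. The first algebraic step is to show $\operatorname{rad}(B_\gamma) = K$. After extending scalars to $\compl$ and using the Wedderburn decomposition $\compl[G] = \bigoplus_\chi M_\chi$ with $M_\chi \iso \enmo(V_\chi)$, the form $B_\gamma$ is block-diagonal on this decomposition: on $M_\chi$ it restricts to $\chi(1)$ times the nondegenerate Frobenius trace form, and so vanishes precisely when $\chi \notin \Irr D$. Hence $\operatorname{rad}(B_\gamma) = \bigoplus_{\chi \notin \Irr D} M_\chi$, and this coincides with $K$ because $f$ kills exactly those simple components on which $D$ is not supported. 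Consequently $B_\gamma$ descends to a nondegenerate form $B_A$ on $A$, and the Gram matrix of the vertex set $(D(g))_{g\in G}$ with respect to $B_A$ is $\bigl(\gamma(g^{-1}h)\bigr)_{g,h}$.

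Next I translate ``affine symmetry'' into an algebraic condition. The permutation $\pi$ induces an affine bijection $P(D) \to P(D)$ with $D(g)\mapsto D(\pi g)$ iff $\pi$ preserves the affine-relation space $K_0 := K \cap \ker\epsilon$, where $\epsilon$ is the augmentation: that condition says precisely that every affine dependence $\sum c_g D(g)=0$ with $\sum c_g=0$ transfers to $\sum c_g D(\pi g)=0$. The easy direction of the theorem is then immediate: if $\pi$ preserves $B_\gamma$, then it preserves $\operatorname{rad}(B_\gamma) = K$; since permutations preserve $\ker\epsilon$ automatically, they preserve $K_0$, and the affine symmetry follows.

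The main obstacle is the converse: given an affine symmetry $\psi$ of $P(D)$ with $\psi(D(g)) = D(\pi g)$, I must show that $\pi$ preserves \emph{all} of $B_\gamma$, not merely $K_0$. The plan is to identify $B_A$ with an affine invariant of the vertex configuration. A natural candidate is (essentially) the inverse of the second-moment tensor $\sum_g (D(g)-\bar D)\otimes(D(g)-\bar D)$, which is affinely equivariant under $\psi$ and so yields an inner product on the translation space of $\aff P(D)$ that $\psi$ preserves. One then checks this inner product coincides, up to a scalar on each simple summand of $A$, with the restriction of $B_A$; the residual scalar ambiguity can be pinned down by exploiting that left and right multiplication by elements of $D(G)$ are themselves affine symmetries of $P(D)$, acting transitively on vertices and on each isotypic component of~$A$. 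Once $\psi$ is shown to be a $B_A$-isometry, evaluating at the vertex pairs $(D(g), D(h))$ yields $\gamma(\pi(g)^{-1}\pi(h)) = \gamma(g^{-1}h)$, completing the proof.
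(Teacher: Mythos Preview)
Your framework is sound and the forward direction (preserving $B_\gamma$ implies preserving its radical $K$, hence the affine-relation space $K_0 = K\cap\ker\epsilon$, hence an affine symmetry) is fine. The gap is in the converse. You correctly note that any affine symmetry preserves the inverse second-moment form on $A$, and that by bi-invariance under left and right $G$-multiplication this form restricts to a scalar multiple $c_\chi$ of the trace form on each simple summand $M_\chi$. But your proposed way to ``pin down'' the $c_\chi$ does not work: the $G\times G$-action does not link different summands $M_\chi$, so no transitivity argument can determine the relative sizes of the $c_\chi$. If these scalars were arbitrary, preserving the second-moment form would not in general imply preserving $B_\gamma$.

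What actually closes the gap is a direct computation. Taking $D$ orthogonal and using Schur orthogonality for matrix coefficients, one finds that the second-moment operator $Q=\sum_g D(g)D(g)^t$ acts on $M_\chi$ as the scalar $\abs{G}/\chi(1)$, so the inverse second-moment form on $M_\chi$ is exactly $\chi(1)/\abs{G}$ times the Frobenius form; evaluating on vertex pairs then gives precisely $\tfrac{1}{\abs{G}}\gamma(g^{-1}h)$. Thus the second-moment Gram matrix and the $B_\gamma$-Gram matrix agree up to a single global scalar, and your argument goes through. The paper reaches the same identity by a different and somewhat slicker route: it realizes $P(D)$ as $P(G,f)$ inside the group algebra for an idempotent $f$, reads off the coefficients of $f$ once from the $Q^{-1}$-construction (Theorem~\ref{t:concretesplitting}) and once from the standard formula for central idempotents in terms of characters, and then invokes the general criterion of Corollary~\ref{c:affsymchar}, which handles both directions simultaneously.
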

Computing the affine symmetry group of a representation polytope
can  be viewed as a \emph{linear preserver problem}.
A \emph{linear preserver problem} 
is the problem of determining the set of linear transformations
of $\mat_n(\reals)$ that
map a given subset $G \subseteq \mat_n(\reals)$ to itself,
where $\mat_n(\reals)$ denotes the ring of 
$n\times n$-matrices with entries in $\reals$.
This problem has already been studied for various specific
subsets $G$, for example when $G$ 
is a finite irreducible reflection 
group~\cite{limilligan03,LiSpiZobin04,LiTamTsing02}.

We use Theorem~\ref{introt:repsyms} to 
construct counterexamples 
to a conjecture
of Baumeister et~ al.~\cite[Conjecture~5.4]{BHNP09}.
Namely, we have:
\begin{introthm}\label{introt:elab2}
  For every elementary abelian $2$-group $G$ 
  of order $\neq 4$, $8$, $16$, there is a (permutation)
  representation $D\colon G\to \GL(d,\reals)$
  such that the corresponding representation polytope
  has affine symmetry group $D(G)$.
\end{introthm}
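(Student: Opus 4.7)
The plan is to translate the existence question into a combinatorial problem via Theorem~\ref{introt:repsyms}, and then resolve that problem by invoking the classical theory of graphical regular representations (GRRs) of elementary abelian $2$-groups.

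Write $G = (\GF{2})^n$ additively. Every irreducible character $\chi$ of $G$ has $\chi(1) = 1$ and takes values in $\{\pm 1\}$, so for any permutation representation $D$ of $G$ the character of Theorem~\ref{introt:repsyms} is
\[
  \gamma = \sum_{\chi \in S} \chi, \qquad S := \Irr D \subseteq \hat{G},
\]
and the condition $\gamma(\pi(g)^{-1}\pi(h)) = \gamma(g^{-1}h)$ becomes $\gamma(\pi(g) + \pi(h)) = \gamma(g + h)$. Since $G$ is abelian, any $S \subseteq \hat{G}$ containing the trivial character $\mathbf{1}$ is realisable as $\Irr D$ for a suitable permutation representation -- explicitly, the direct sum of the two-point coset representations $G/\ker\chi$ for $\chi \in S \setminus \{\mathbf{1}\}$, each contributing $\{\mathbf{1},\chi\}$ to $\Irr D$ -- and faithfulness of $D$ amounts to $S$ generating $\hat{G}$. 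The problem therefore reduces to choosing such an $S$ so that the only permutations $\pi$ of $G$ satisfying the displayed identity are the translations $\pi_a(g) = a + g$.

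Composing with a translation, we may assume $\pi(0) = 0$; then $h = 0$ gives $\gamma \circ \pi = \gamma$, and the full identity identifies $\pi$ as an automorphism, fixing $0$, of the \emph{Cayley colour graph} on $G$ whose edge $\{g,h\}$ carries the colour $\gamma(g + h)$. The automorphism group of this colour graph is contained in $\bigcap_c \Aut(\mathrm{Cay}(G, \Lambda_c))$, where $\Lambda_c = \gamma^{-1}(c) \setminus \{0\}$ runs over the off-diagonal level sets. A sufficient condition for this intersection to coincide with $G$, forcing $\pi = \id$, is that some single $\Lambda_c$ be the connection set of a graphical regular representation of $G$. Classical results of Nowitz, Watkins, and Hetzel on GRRs of abelian groups show that $(\GF{2})^n$ admits a GRR precisely when $n \notin \{2, 3, 4\}$; the case $n = 1$ is handled trivially by $K_2$ (with $D$ the regular representation of $\GF{2}$). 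So for every $n$ allowed by the hypothesis, a GRR connection set $C \subseteq G \setminus \{0\}$ is available.

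The main obstacle is the final matching step: given $C$, one must exhibit $S \ni \mathbf{1}$ generating $\hat{G}$ such that $C$ equals some level set $\Lambda_c$ of $\gamma = \sum_{\chi \in S} \chi$. Because the Fourier coefficients of $\gamma$ are forced into $\{0,1\}$, not every subset of $G \setminus \{0\}$ can so arise, and the matching is not automatic. I would attack it in two complementary ways: first, by an adaptive construction, starting from a basis of $\hat{G}$ (already forcing $\gamma$ to distinguish $0$ from the generic stratum) and then augmenting $S$ character by character so as to carve off the desired GRR class in $\gamma$-value; second, as a backstop valid for $n \geq 5$, by a counting argument in which the $2^{2^n - 1}$ admissible $S$'s vastly exceed the Fourier-closed family of $S$'s compatible with any fixed non-identity permutation $\pi$ of $G$ fixing $0$, forcing a rigid $S$ to exist. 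Either route yields a faithful permutation representation $D$ for which Theorem~\ref{introt:repsyms} gives $\AGL(P(D)) = D(G)$, completing the proof for every $n \neq 2, 3, 4$.
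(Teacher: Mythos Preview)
Your reduction via Theorem~\ref{introt:repsyms} is correct, and the observation that it suffices for some level set $\Lambda_c$ of $\gamma$ to be a GRR connection set is a clean idea. The difficulty, as you yourself flag, is the \emph{matching step}, and neither of the two attacks you sketch actually closes the gap.

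The adaptive construction is not a proof as it stands: adding a character $\chi$ to $S$ bisects each current level set according to the sign of $\chi$, so the level sets you can reach by any sequence of such moves are exactly the nonempty atoms of the Boolean algebra generated by the hyperplanes $\ker\chi$ for $\chi\in S$. There is no reason a \emph{prescribed} GRR connection set $C$ should be such an atom, and you give no mechanism for steering the bisections toward $C$. The counting argument has a more serious quantitative problem: there are $(2^n-1)!$ permutations of $G$ fixing $0$, while only $2^{2^n-1}$ subsets $S$, and for $n\geq 3$ the former dwarfs the latter. A union bound over all $\pi$ therefore cannot work without first drastically restricting the class of $\pi$ that can occur (for instance to $\GL(n,2)$), and even then the crude bound $2^{2^n-2}$ on the number of $S$ fixed by a given linear $\pi$ is not small enough. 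Some genuine structure theorem about which permutations can preserve the colour partition of an ideal character would be needed, and you have not supplied one.

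For comparison, the paper does not go through GRRs at all. It realises the required representation polytopes explicitly as \emph{cut polytopes} of graphs: the cut space of a connected graph on $n+1$ vertices is an elementary abelian $2$-group of rank $n$, and the ideal character $\gamma$ is $\lvert E\rvert - 2\lvert S\rvert$ on cut sets $S$. The stabiliser of $\emptyset$ in the affine symmetry group is then identified with the ``admissible permutations'' of the cut space, and the paper proves (Lemma~\ref{lm:suff}, Proposition~\ref{prop:all_induced}) that for complements of suitable asymmetric trees every admissible permutation comes from a graph automorphism---hence is trivial. This handles $n\geq 7$ constructively; $n=5,6$ are dispatched by direct computation, and Lemma~\ref{l:elab2repperm} converts everything to permutation representations. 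The trade-off is that the paper's argument is hands-on graph theory with some ad hoc case work, whereas your route, \emph{if} the matching step could be made to work, would give a uniform existential proof leaning on the known GRR classification.
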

These representation polytopes are constructed as cut polytopes
of certain graphs. 
(It is an easy consequence of the general
 theory in Sections~\ref{sec:groupalg}--\ref{sec:reppts_groupalg} 
 that every orbit polytope of an elementary abelian 
 $2$\nbd group is affinely equivalent to a
 permutation polytope.)

Finally, we have another amusing characterization of representation 
polytopes
among orbit polytopes:
\begin{introthm}\label{introt:reppolyinvsym}
The orbit polytope $P(G,v)$ is affinely equivalent to a representation polytope
of the same group $G$ if and only if
$P(G,v)$ has an affine symmetry
sending every vertex $gv$ to $g^{-1}v$.
\end{introthm}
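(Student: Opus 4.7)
The ``only if'' direction is straightforward. Because $G$ is finite, we may replace $D$ by an isomorphic orthogonal representation (equivalently, conjugate so that the standard inner product is $G$-invariant). Then transposition $A \mapsto A^T$ is a linear map on $\mat_n(\reals)$ sending $D(g)$ to $D(g)^T = D(g)^{-1} = D(g^{-1})$, and so restricts to an affine symmetry of $P(D)$ that inverts the group labels on vertices. Transporting this symmetry through an affine $G$-equivalence $P(G,v) \cong P(D)$ yields the required $\sigma$ on $P(G,v)$.

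For the converse, assume $\sigma$ exists. Since the centroid $\bar v = \tfrac{1}{\abs{G}}\sum_g gv$ is fixed both by $G$ and by $\sigma$, after a translation we may assume $\bar v = 0$, which makes $\sigma$ a linear involution of the affine hull of $P(G,v)$; restricting to this affine hull, we may also assume $P(G,v)$ is full-dimensional. Consider the surjective $G$-equivariant linear map
\[
  \phi'\colon \reals G \longrightarrow \reals^d, \qquad e_g \mapsto gv,
\]
with kernel $L$, together with the canonical anti-involution $J$ of $\reals G$ defined by $J(e_g) = e_{g^{-1}}$. For $c = \sum_g c_g e_g \in L$, applying $\sigma$ to $0 = \sum_g c_g\, gv$ and using linearity of $\sigma$ yields $0 = \sum_g c_g\, g^{-1}v = \phi'(J(c))$, so $J(c) \in L$. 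Hence $L$ is $J$-invariant.

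The decisive algebraic observation is that a $J$-invariant left ideal of $\reals G$ is automatically two-sided: for $x \in L$ and $r \in \reals G$ we have $J(xr) = J(r)\,J(x) \in J(r) \cdot L \subseteq L$ (since $L$ is a left ideal), hence $xr = J(J(xr)) \in J(L) = L$. Thus $L$ is a two-sided ideal of $\reals G$, and $B := \reals G / L$ is a finite-dimensional semisimple $\reals$-algebra of dimension $d$. Each $D(g) := [e_g] \in B$ is invertible with inverse $[e_{g^{-1}}]$, so $D\colon G \to B^\times$ is a group representation; embedding $B$ into $\mat_d(\reals)$ via its left regular representation turns $D$ into a matrix representation. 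The induced linear isomorphism $\overline{\phi'}\colon B \to \reals^d$ sends $D(g)$ to $gv$, and restricts to an affine $G$-equivalence of $P(D) = \conv D(G)$ with $P(G,v)$.

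The main difficulty is extracting a sufficiently strong algebraic condition from the geometric hypothesis on $\sigma$. The crucial point is that placing the centroid at the origin turns $\sigma$ into a linear map, which upgrades the naive affine-relation statement (closure of $L \cap H_0$ under $J$, where $H_0 = \{c : \sum c_g = 0\}$) to closure of the entire left ideal $L$ under $J$. Once this is in hand, the anti-involution argument promotes $L$ to a two-sided ideal, and the sought representation polytope is essentially the quotient algebra $\reals G / L$.
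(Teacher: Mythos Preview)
Your proof is correct and takes a genuinely different route from the paper's. The paper works with the Gram matrix $Q = \sum_g (gv)(gv)^t$ and the function $f(g) = v^t Q^{-1}(gv)$: using Corollary~\ref{c:affsymchar}, the inversion symmetry gives $f(gh^{-1}) = f(g^{-1}h)$, and combined with the direct calculation $f(g^{-1}) = f(g)$ this shows $f$ is a class function, so the idempotent $f = \sum_g f(g^{-1})g$ lies in $\Z(\reals G)$. One then invokes Theorem~\ref{t:concretesplitting} and Theorem~\ref{t:isreppolytop} (centrality of the idempotent characterizes representation polytopes) to conclude.

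Your argument bypasses this machinery: you observe that centering the polytope makes the inversion symmetry linear, whence the kernel $L$ of $e_g \mapsto gv$ is stable under the canonical anti-involution $J$, and a $J$-stable left ideal is automatically two-sided. The quotient $\reals G / L$ then directly furnishes the representation. The two arguments meet in the middle---your $L$ is the paper's $\reals G(1-f)$, and $L$ being two-sided is equivalent to $f$ being central by Lemma~\ref{l:centidemp}---but your route is more self-contained and avoids the Gram-matrix computations of Section~\ref{sec:linisocrit} and the explicit idempotent of Theorem~\ref{t:concretesplitting}. What the paper's approach buys is its integration with the surrounding theory; what yours buys is a cleaner standalone proof needing only the first isomorphism theorem and one line of ring theory.

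One small remark: in the final step you identify $P(D)$ with $\conv\{[e_g]\}\subseteq B$ rather than with a polytope in a matrix space. This is fine since left multiplication embeds $B$ into $\enmo_{\reals}(B)$ and evaluation at $1_B$ recovers $[e_g]$ from $D(g)$, but it would not hurt to say so explicitly.
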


\subsection{Outline}
The paper is organized as follows:
Section~\ref{sec:prelim} contains preliminary remarks.
In Section~\ref{sec:linisocrit}, we review and slightly generalize
a criterion 
of Bremner, Dutour Sikiri\'{c} and Schürmann~\cite{bdss09}
which allows us to effectively compute the affine symmetries
of a polytope.
(A corollary is that an affine group cannot act
transitively on the $2$-subsets of the vertices of a polytope,
unless the polytope is a simplex,
a fact which we could not find in the literature.)
We define generic points in Section~\ref{sec:generic} and 
then prove Theorems~\ref{introt:generic}, \ref{introt:genclose}
and~\ref{introt:absirr} in Sections~\ref{sec:generic}--\ref{sec:gensymgroup}.
In Section~\ref{sec:reppolytopes}, we begin the study of 
representation polytopes.
Section~\ref{sec:groupalg} contains more technical material.
This material is, however, indispensable for a deeper understanding 
of the different possible orbit polytopes belonging to a fixed 
abstract finite group, and is also needed in 
Section~\ref{sec:reppts_groupalg}.
This section contains different characterizations of representation 
polytopes (including Theorem~\ref{introt:reppolyinvsym}) 
and the proof of Theorem~\ref{introt:repsyms}.
In Section~\ref{sec:elab2} we consider orbit polytopes
of elementary abelian $2$-groups and construct 
the representation polytopes of Theorem~\ref{introt:elab2}.
Finally, in the last section 
we discuss some open questions and conjectures.

\section{Generalities}\label{sec:prelim}
As in the introduction,
$G\leq \GL(d,\reals)$ is a finite group
and 
\[ P(G,v) = \conv\{gv\mid g\in G\}
\]
the orbit polytope of some $v\in \reals^d$.
We also use the notation $P(G,v)$, 
if $G$ is some abstract finite group together with a representation
$D\colon G\to \GL(d,\reals)$, and $v\in \reals^d$.

Notice that every $gv$ is a vertex of $P(G,v)$:
A priori, the vertices are a subset of
$Gv$.
Every element of $G$ induces a symmetry of 
$P(G,v)$ onto itself and thus maps vertices to vertices.
Thus every element of $Gv$ is a vertex.

We need a straightforward generalization of an observation
by Guralnick and Perkinson~\cite{guralnickperkinson06}.
We use the notation
\[ \Fix G = \{v\in \reals^d \mid gv = v \text{ for all $g\in G$}\}
\]
for the fixed space of $G$ in $\reals^d$,
and we write $\aff X$ for the affine hull of a set of points 
$X\subseteq \reals^d$.
Recall that
\[ \aff X = \{ \sum_{x\in X}\lambda_x x \mid \sum_{x\in X}\lambda_x = 1
             \}.
\]
\begin{lemma}\label{l:baryc_orbitpolytope}
  We have
  \[  \left\{\frac{1}{\abs{G}} \sum_{g\in G} gv
         \right\}
     =P(G,v)\cap \Fix G = \aff(Gv)\cap \Fix G
     .
  \]
  Thus the following are equivalent:
  \begin{enumthm}
  \item \label{it:baryc=origin} $\sum_{g\in G} gv = 0$,
  \item \label{it:zeropolyt} $0\in P(G,v)$,
  \item \label{it:zeroinaffhull} $0\in \aff(Gv)= \aff\{gv \mid g\in G\}$.
  \item \label{it:fixedspace} $\erz{gv\mid g\in G} \cap \Fix G = \{0\}$.
  \end{enumthm}
\end{lemma}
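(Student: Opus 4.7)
The proof hinges on a single averaging trick. I would begin by setting $b := \frac{1}{\abs{G}}\sum_{g\in G} gv$. Since $b$ is an equally weighted convex combination of the vertices $gv$, it lies in $P(G,v)$, and it is $G$\nbd fixed because left multiplication by any $h\in G$ merely permutes the summands. This immediately yields the inclusions $\{b\}\subseteq P(G,v)\cap \Fix G \subseteq \aff(Gv)\cap \Fix G$. The work is therefore to show the reverse inclusion $\aff(Gv)\cap\Fix G \subseteq \{b\}$: given $w = \sum_g \lambda_g gv$ with $\sum_g \lambda_g = 1$ and $hw = w$ for all $h\in G$, averaging over $h$ yields
\[ w = \frac{1}{\abs{G}}\sum_{h\in G} hw = \sum_{g\in G}\lambda_g \Bigl(\frac{1}{\abs{G}}\sum_{h\in G} hg\, v\Bigr) = \Bigl(\sum_g \lambda_g\Bigr) b = b, \]
where the inner sum equals $b$ by the substitution $h' = hg$. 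This proves the displayed chain of equalities.

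For the list of equivalences, note first that $0\in\Fix G$, because every $g\in G$ is a linear map. Condition~\ref{it:baryc=origin} is just the restatement $b=0$. Then~\ref{it:baryc=origin}$\Rightarrow$\ref{it:zeropolyt} is trivial since $b\in P(G,v)$, and \ref{it:zeropolyt}$\Rightarrow$\ref{it:zeroinaffhull} because $P(G,v)\subseteq \aff(Gv)$. Assuming~\ref{it:zeroinaffhull}, the point $0$ lies in $\aff(Gv)\cap\Fix G$, which by the first part of the lemma equals $\{b\}$; hence $b=0$, recovering~\ref{it:baryc=origin}.

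It remains to connect these to~\ref{it:fixedspace}. Let $V := \erz{gv\mid g\in G}$, the \emph{linear} span. The same averaging argument, now applied to an arbitrary $w = \sum_g \mu_g gv \in V\cap\Fix G$ \emph{without} the constraint $\sum \mu_g = 1$, gives $w = (\sum_g \mu_g)\, b$. Thus $b=0$ forces $V\cap\Fix G = \{0\}$, so~\ref{it:baryc=origin}$\Rightarrow$\ref{it:fixedspace}. Conversely, $b\in V\cap\Fix G$ always holds, so~\ref{it:fixedspace} forces $b=0$. The only subtlety is to keep the affine-hull version of the averaging (which uses $\sum\lambda_g = 1$ and so always produces $b$) carefully separated from the linear-span version (whose output is a scalar multiple of $b$, hence zero precisely when $b=0$); beyond that bookkeeping there is no real obstacle.
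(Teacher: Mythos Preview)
Your proof is correct and is essentially the same as the paper's: the paper encodes the averaging trick via the idempotent $E_1 = \frac{1}{\abs{G}}\sum_{g\in G} g$ and uses $w\in\Fix G \iff E_1w=w$ to get $w = E_1w = \sum_g \lambda_g E_1 v = E_1 v$, while you perform the average over $h\in G$ explicitly; the computations and the handling of~\ref{it:fixedspace} (the paper's one-line remark $\erz{gv\mid g\in G}\cap\Fix G = \erz{E_1v}$) match yours.
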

\begin{proof}
  Obviously,
  $(1/\abs{G})\sum_ g gv \in P(G,v)\cap \Fix G$.
    Let 
    $E_1$ be the matrix
    \[ E_1=\frac{1}{\abs{G}}\sum_{g\in G} g.
    \]
  It is easy to see (and well known)
  that $w\in \Fix G$ if and only if $E_1w=w$.
  Let $w\in \aff(Gv)\cap \Fix G$ and write
    \[ w = \sum_{g\in G} \lambda_g gv, \quad \sum_{g\in G}\lambda_g = 1.
    \]
  It follows
    \[ w= E_1 w = \sum_{g\in G}E_1 \lambda_g  g v
             = \sum_{g\in G} \lambda_g E_1 v = E_1v.
    \]
  Thus $\aff(Gv)\cap \Fix G = \{E_1v\}$.
  The same argument shows that
  $\erz{gv\mid g\in G}\cap \Fix G = \erz{E_1 v}$.
  
  The equivalence of the assertions follows.  
\end{proof}
Note  that $E_1v$ is the barycenter of the orbit polytope
$P(G,v)$, and that the translated polytope
$P(G,v)- E_1 v$ is the orbit polytope of $v-E_1v$.
It is thus no loss of generality to assume that
$E_1v=0$.
The barycenter of an orbit polytope is its only point
which is fixed by every element of $G$.

Note that we could have started
with a finite subgroup $G$ of the \emph{affine} group
$\AGL(d,\reals)$. Since every finite group
of affine transformations fixes a point
(namely, the barycenter of an orbit), 
we can choose a coordinate system such that
the elements of $G$ are represented by matrices.
It is thus no real loss of generality to assume
$G\leq \GL(d,\reals)$ from the beginning.

If we want to compute the affine symmetry group of an
orbit polytope $P(G,v)$,
we can restrict our attention to the 
affine space generated by the orbit $Gv$.
We can thus assume that $P(G,v)$ is full-dimensional.
This already implies (by Lemma~\ref{l:baryc_orbitpolytope})
that $P(G,v)$ is centered at the origin.
The affine symmetries of $P(G,v)$
are thus realized by linear maps.

We use the following general notation:
For any set $S\subset \reals^d$,
we write $\AGL(S)$ for the set of affine maps
$\aff(S)\to \aff(S)$ that permute $S$,
and $\GL(S)$ for the set of linear maps 
$\erz{S}\to\erz{S}$ that permute $S$.
Thus for a polytope $P$ with vertex set $S$
we have 
$\AGL(P)= \AGL(S)$.
If $P$ is centered at the origin, then
$\AGL(P)= \GL(P)=\AGL(S)=\GL(S)$.

\section{Linear isomorphisms}\label{sec:linisocrit}
We can compute the affine symmetries of a polytope 
using a result by 
Bremner, Dutour Sikiri\'{c} and Schürmann~\cite{bdss09}.
Since we will need a slight generalization,
we give a complete proof in this section.
We will then apply this criterion to orbit polytopes.

Actually, the result we are going to reprove 
is a criterion about 
isomorphisms of vector families.
Let $K$ be a field and 
let $(v_i\mid i\in I)$ and 
$(\tilde{v}_i\mid i\in I)$ be
two families of vectors in $K^d$
indexed by the same finite set $I$.
(In our applications, we will usually have $K=\reals$,
but we will also need the case where 
$K=\reals(X_1,\dotsc, X_n)$ 
is a function field.)
Following Bremner, Dutour Sikiri\'{c} and Schürmann~\cite{bdss09},
we form the $d\times d$\nbd matrix
\[ Q = \sum_{i\in I}^n v_i v_i^{t}= VV^t,
   \quad V= (v_i\mid i\in I).
\]
Here $V$ is a matrix with columns indexed by $I$.
Note that $Q$ is invertible if
$K=\reals$ and
$K^d = \erz{v_i\mid i\in I }$,
since then $Q$ is positive definite.
(Over $K=\compl$, we would have to use 
the conjugate transpose instead of the transpose,
but we will not need this case.)
Similarly, we write
$\tilde{V}= (\tilde{v}_i\mid i\in I)$ and
$\tilde{Q}= \tilde{V}\tilde{V}^t$.
The next result generalizes~\cite[Proposition~3.1]{bdss09}:
\begin{prop}\label{p:linisocrit}
Let $Q$ and $\tilde{Q}$ be invertible.
There is a $d\times d$\nbd matrix $A$ such that
$Av_i= \tilde{v}_i$ for all $i\in I$
if and only if 
$V^t Q^{-1}V = \tilde{V}^t\tilde{Q}^{-1}\tilde{V}$.
In this case, we have $A= \tilde{V}V^t Q^{-1}$.
\end{prop}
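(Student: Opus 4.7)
The plan is to treat the system $Av_i = \tilde v_i$ uniformly as the single matrix equation $AV = \tilde V$, and then exploit the fact that the invertibility of $Q$ and $\tilde Q$ forces $V$ and $\tilde V$ to have full row rank $d$. From this, both the uniqueness of $A$ (when it exists) and the invertibility of $A$ will drop out cheaply, and the two directions become routine matrix manipulations.

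First I would prove the \emph{if} direction by simply verifying the proposed formula. Define $A = \tilde V V^t Q^{-1}$. Then
\[
AV = \tilde V V^t Q^{-1} V = \tilde V \bigl(\tilde V^t \tilde Q^{-1} \tilde V\bigr) = \tilde Q \tilde Q^{-1} \tilde V = \tilde V,
\]
where the second equality uses the assumed identity $V^t Q^{-1} V = \tilde V^t \tilde Q^{-1} \tilde V$ and the third uses $\tilde V \tilde V^t = \tilde Q$. This also establishes the final sentence of the proposition, assuming uniqueness.

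For the \emph{only if} direction, suppose $AV = \tilde V$. The key observation is that $\tilde V \tilde V^t = \tilde Q$ is invertible, so the columns of $\tilde V$ span $K^d$; since these columns lie in the image of $A$, the map $A\colon K^d \to K^d$ is surjective and hence invertible. Therefore $\tilde Q = AVV^t A^t = AQA^t$ can be inverted to give $\tilde Q^{-1} = (A^t)^{-1} Q^{-1} A^{-1}$, and substituting yields
\[
\tilde V^t \tilde Q^{-1} \tilde V = V^t A^t (A^t)^{-1} Q^{-1} A^{-1} A V = V^t Q^{-1} V,
\]
as required. Uniqueness of $A$ (and the formula $A = \tilde V V^t Q^{-1}$) follows by right-multiplying $AV = \tilde V$ by $V^t$ to obtain $AQ = \tilde V V^t$ and using that $Q$ is invertible.

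There is really no serious obstacle; the only point that is easy to miss is the step where one shows that $A$ is automatically invertible, without which one cannot pass from $\tilde Q = AQA^t$ to the inverted identity. I would also mention that the whole argument works verbatim over any field $K$, since we never use ordering or positivity — invertibility of the symmetric matrices $Q$ and $\tilde Q$ is the only hypothesis needed.
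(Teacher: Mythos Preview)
Your proof is correct and follows essentially the same approach as the paper: both directions are handled by the same matrix manipulations (substituting $\tilde Q = AQA^t$ after noting that $A$ is invertible for the forward direction, and verifying $A=\tilde V V^t Q^{-1}$ works for the converse), and uniqueness comes from the full row rank of $V$. The only differences are cosmetic ordering and your slightly more explicit derivation of the formula via $AQ = \tilde V V^t$.
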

\begin{proof}
  Since $Q$ and $\tilde{Q}$ have full rank, we must have
  $K^d = \erz{v_i\mid i\in I}= \erz{\tilde{v}_i\mid i\in I}$.
  In particular, there is at most one $A$ with $Av_i = \tilde{v}_i$.
  
  Assume that $A$ exists. Note that $A$ is necessarily invertible
  since it maps a generating system to a generating system.
  By assumption, $AV= \tilde{V}$.
  It follows
  \begin{align*}
    \tilde{V}^t \tilde{Q}^{-1}\tilde{V}
      = \tilde{V}^t \left( \tilde{V}\tilde{V}^t\right)^{-1}\tilde{V}
      &= \tilde{V}^t \left( A V V^tA^t \right)^{-1} \tilde{V}
      \\
      &= \tilde{V}^t (A^t)^{-1} (VV^t)^{-1} A^{-1} \tilde{V}
      \\
      &= V^t Q^{-1} V.
  \end{align*}
  
  Conversely, if 
  $V^t Q^{-1}V = \tilde{V}^t\tilde{Q}^{-1}\tilde{V}$,
  then
  \begin{align*}
    \tilde{V}
      &= \tilde{Q}\tilde{Q}^{-1}\tilde{V}
      = \tilde{V}\tilde{V}^t \tilde{Q}^{-1}\tilde{V}
      = \tilde{V} V^t Q^{-1} V,
  \end{align*}
  so we can take $A= \tilde{V}V^t Q^{-1}$.
\end{proof}

Let $V=(v_i\mid i\in I)$ be a vector family in $K^d$
and $\sigma\in \Sym(I)$ be a permutation of $I$.
We say that $\sigma$ is a \defemph{linear symmetry}
of $V$ if there is $A\in \GL(d,K)$
with $Av_i= v_{\sigma(i)}$.
We write 
\[ \LinSym(V)
   = \{ \sigma \in \Sym(I)\mid 
   \exists A\in \GL(d,K)\colon
   Av_i = v_{\sigma(i)}
   \}
\]
and call this the \defemph{linear symmetry group}
of $(v_i\mid i\in I)$.
Proposition~\ref{p:linisocrit} gives,
in particular, a criterion for when
$\sigma\in \LinSym(V)$.
\begin{cor}\label{c:bdss}
  Let $\sigma\in \Sym(I)$
  and $V= (v_i\mid i\in I)\in K^{d\times I}$
  be such that $Q= VV^t$ is invertible,
  and set $W= V^t Q^{-1}V $. 
  Then $\sigma\in \LinSym(V)$ if and only if
  \[ P(\sigma)^{-1}W P(\sigma) = W
     \]
  where $P(\sigma)\in K^{I\times I}$ is the permutation matrix belonging to $\sigma$.
  In this case, for $A(\sigma)= VP(\sigma)V^tQ^{-1}$ we have
  $A(\sigma)v_i = v_{\sigma(i)}$ for all $i\in I$.
\end{cor}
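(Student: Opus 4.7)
The plan is to deduce the corollary as an immediate application of Proposition~\ref{p:linisocrit} to the permuted family $\tilde{v}_i := v_{\sigma(i)}$. The key observation is that in matrix form this family is obtained from $V$ by a right multiplication with a permutation matrix, and permutation matrices are orthogonal, so the Gram-like quantity $\tilde{Q}$ is unchanged.

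More concretely, I would first fix the convention that $P(\sigma)$ has a $1$ in position $(\sigma(i), i)$ (and zeros elsewhere), so that $\tilde{V} = V P(\sigma)$ satisfies $\tilde{v}_i = v_{\sigma(i)}$. Then, using $P(\sigma) P(\sigma)^t = I$, I would compute
\[
  \tilde{Q} = \tilde{V}\tilde{V}^t = V P(\sigma) P(\sigma)^t V^t = V V^t = Q,
\]
so $\tilde{Q}$ is automatically invertible, and
\[
  \tilde{V}^t \tilde{Q}^{-1} \tilde{V}
    = P(\sigma)^t V^t Q^{-1} V P(\sigma)
    = P(\sigma)^{-1} W P(\sigma).
\]
By Proposition~\ref{p:linisocrit}, the existence of $A\in K^{d\times d}$ with $A v_i = v_{\sigma(i)}$ for all $i\in I$ is equivalent to the equality $V^t Q^{-1} V = \tilde{V}^t \tilde{Q}^{-1} \tilde{V}$, that is, $W = P(\sigma)^{-1} W P(\sigma)$. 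This is the stated criterion, and the explicit formula $A(\sigma) = \tilde{V} V^t Q^{-1} = V P(\sigma) V^t Q^{-1}$ is handed to us by the same proposition. Note that $A(\sigma)$ is automatically invertible, as observed in the proof of Proposition~\ref{p:linisocrit}, so indeed $\sigma\in \LinSym(V)$.

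There is no real obstacle beyond being careful with the convention for $P(\sigma)$; everything else is a one-line substitution into Proposition~\ref{p:linisocrit}.
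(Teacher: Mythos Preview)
Your proof is correct and follows essentially the same route as the paper: set $\tilde{V}=VP(\sigma)$, observe that $\tilde{V}\tilde{V}^t=VV^t$ because $P(\sigma)^t=P(\sigma)^{-1}$, and invoke Proposition~\ref{p:linisocrit}. The paper's version is simply more terse; your explicit computation of $\tilde{V}^t\tilde{Q}^{-1}\tilde{V}=P(\sigma)^{-1}WP(\sigma)$ and your remark on the convention for $P(\sigma)$ are just the details behind the paper's one-line ``The result follows.''
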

\begin{proof}
  Write $\tilde{V}= VP(\sigma)$, so that
  $\tilde{V}$ has column $v_{\sigma(i)}$
  at place $i$.
  Then $\tilde{V}\tilde{V}^t = VV^t$
  since $P(\sigma)^t= P(\sigma)^{-1}$.
  The result follows from Proposition~\ref{p:linisocrit}.
\end{proof}

If $Q$ is invertible, write $W= V^t Q^{-1} V = (w_{ij})$,
so $w_{ij} = v_i^t Q^{-1} v_j$.
Let $G(V)$ be the complete graph with vertex set $I$,
vertex colors $w_{ii}$ and edge colors $w_{ij}$.
The last corollary tells us that 
the linear symmetries of $(v_i\mid i\in I)$ 
yield isomorphisms
of the edge colored graph $G(V)$ and vice versa.
This means that in practice one can compute the
linear symmetries by computing graph automorphisms,
using software like \texttt{nauty}~\cite{mckaypiperno14}.

The map $\sigma\mapsto A (\sigma)$
is a group homomorphism
from $\LinSym(V)$ onto $\GL(\{v_i\mid i\in I\})$.
(Recall that we write $\GL(S)$
 for the set of matrices $A\in \GL(d,K)$
 mapping a set $S\subseteq K^d$ onto itself.
Under the assumptions of Corollary~\ref{c:bdss},
$S=\{v_i \mid i\in V\}$ is finite and generates $\reals^d$, 
so  $\GL(S)$ is finite and isomorphic to a permutation group
 on $S$.)
Notice that we do not exclude the possibility
that $i\mapsto v_i$ is not injective.
In that case, $\LinSym(V)\to \GL(\{v_i\mid i\in I\})$  
has a nontrivial kernel, namely the permutations
preserving the fibers of $i\mapsto v_i$.
If $i\mapsto v_i$ is injective,
then $\LinSym(V)\iso \GL(\{v_i\mid i \in I\})$.

Corollary~\ref{c:bdss} has the following amusing consequence.
(One can also prove this using the representation theory of finite
groups, in particular, the decomposition of a permutation
representation into irreducible representations over $\reals$.)
\begin{cor}
  If the affine symmetry group of a polytope $P$
  acts transitively on the $2$\nbd subsets 
  of its vertices,
  then $P$ is a simplex.
\end{cor}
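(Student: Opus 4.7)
The plan is to apply Corollary~\ref{c:bdss} to the vector family $V=(v_1,\dotsc,v_n)$ consisting of the vertices of $P$. First I would translate $P$ so that its barycenter lies at the origin and restrict attention to $\aff P$; this reduces to the case where $P$ is full-dimensional in $\reals^d$, the affine symmetry group acts by linear maps permuting the vertices, and $V\mathbf{1}=0$, where $\mathbf{1}$ denotes the all-ones vector. The cases $n\leq 2$ are trivial ($P$ is already a simplex), so assume $n\geq 3$. Then $Q=VV^t$ is positive definite, and I form the Gram-type matrix $W=V^tQ^{-1}V=(w_{ij})$.

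Next I would use the hypothesis to pin down the shape of $W$. Transitivity on $2$\nbd subsets implies transitivity on vertices for $n\geq 3$ (otherwise the $2$\nbd subsets contained in a proper vertex orbit would form a non-empty, proper, invariant subset of all $2$\nbd subsets), so by Corollary~\ref{c:bdss} the diagonal entries $w_{ii}$ are all equal to some scalar $a$. For distinct indices, given any two unordered pairs $\{i,j\}$ and $\{k,l\}$, choose a symmetry $\sigma$ with $\{\sigma(i),\sigma(j)\}=\{k,l\}$; Corollary~\ref{c:bdss} then gives $w_{ij}=w_{\sigma(i)\sigma(j)}\in\{w_{kl},w_{lk}\}$. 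Since $Q^{-1}$ is symmetric, so is $W$, and thus $w_{ij}=w_{kl}$ in either case. Hence $W=(a-b)I_n+bJ_n$ for some $b\in\reals$, where $J_n$ denotes the all-ones matrix.

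The final step is a rank count. The centering condition $V\mathbf{1}=0$ gives $W\mathbf{1}=0$, hence $(a-b)+nb=0$, so $b=-a/(n-1)$. Substituting,
\[ W \;=\; \frac{an}{n-1}\left(I_n-\tfrac{1}{n}J_n\right),
\]
a matrix of rank $n-1$ when $a\neq 0$. On the other hand, $\rk W=\rk V=d$ since $V$ has full row rank (equivalent to $Q=VV^t$ being invertible). The alternative $a=0$ would force $v_i^tQ^{-1}v_i=0$ for every $i$, and positive definiteness of $Q^{-1}$ would give $v_i=0$, contradicting $d\geq 1$. Therefore $d=n-1$, so $P$ has $d+1$ vertices in dimension $d$ and is a simplex.

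The only delicate point is combining the symmetry of $W$ with $2$\nbd subset (as opposed to ordered pair) transitivity to collapse the off-diagonal entries to a single value; everything else is routine once Corollary~\ref{c:bdss} is in hand.
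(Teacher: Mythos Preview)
Your argument is correct and shares the paper's setup: center $P$, form $W=V^tQ^{-1}V$, and use Corollary~\ref{c:bdss} together with the transitivity hypothesis (and the symmetry of $W$) to force all diagonal entries of $W$ equal and all off-diagonal entries equal. The only difference is the finishing step. The paper observes that once $W$ has this form, \emph{every} permutation of the indices preserves $W$, so by Corollary~\ref{c:bdss} every permutation of the vertices comes from a linear map; it then argues directly that any linear dependence $\sum_i\lambda_i v_i=0$ is preserved under transpositions, forcing all $\lambda_i$ equal, whence $\dim\aff P=n-1$. You instead exploit the centering $V\mathbf{1}=0$ to pin down $W$ explicitly as a scalar multiple of $I_n-\tfrac{1}{n}J_n$ and read off $\rk W=n-1=d$. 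Both routes are short; yours is a clean rank computation, while the paper's makes explicit the pleasant consequence that the full symmetric group acts affinely on $P$.
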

\begin{proof}
  Without loss of generality, we may embed $P$ in 
  $\reals^d$ such that $P$ is full-dimensional and 
  centered at the origin.
  We can thus assume that the affine symmetries of $P$
  are linear. 
  Let $v_1$, $\dotsc$, $v_n$ be the vertices of $P$
  and let $W = V^tQ^{-1}V = (w_{ij})$ be the corresponding 
  vertex and edge color matrix.
  Let $i\neq j\in \{1,\dotsc,n\}$.
  Then there is a linear symmetry of $P$ mapping
  the vertices $\{v_1,v_2\}$ to 
  $\{v_i,v_j\}$.
  It follows from Corollary~\ref{c:bdss} 
  that $w_{ij}=w_{12}$ or $w_{ij}=w_{21}$.
  Since $W$ is symmetric anyway, this means that
  $w_{ij}=w_{12}$ for all $i\neq j$.
  So all entries off the diagonal of $W$ are equal.
  
  A permutation group which acts transitively
  on the $2$\nbd subsets of a set with $n\neq 2$ elements
  is also transitive on the set itself.
  It follows $w_{11}=w_{22}=\dotsb=w_{nn}$
  for $n\neq 2$.
  (For $n=2$, the corollary is trivially true anyway.)
  
  Again by Corollary~\ref{c:bdss} it follows that
  every permutation of the vertices is induced
  by a linear map.
  It follows easily that $P$ is a simplex:
  Let $\lambda=(\lambda_1,\dotsc,\lambda_n)$ be a linear dependence
  of the vertices, i.\,e., 
  $\lambda_1v_1 + \dotsb + \lambda_nv_n=0$.
  Every permutation of the coordinates of $\lambda$
  yields also a linear dependence.
  By applying the transposition $(i,j)$ and subtracting dependencies,
  we see that $(\lambda_i-\lambda_j)v_i + (\lambda_j-\lambda_i)v_j=0$.
  Since $v_i\neq v_j$, it follows that $\lambda_i=\lambda_j$
  for all $i\neq j$.
  Therefore, there is, up to scalars, at most one linear dependence,
  and thus the affine hull of the vertices has dimension $n-1$.
  It follows that $P$ is a simplex.
\end{proof}

Now let $G$ be a finite group 
acting by linear transformations on 
the vector space $V=\reals^d$,
and let $P=P(G,v)$ be an orbit polytope.
We assume that $P$ is centered at the origin.
Then $\AGL(P)=\GL(P)=\GL(Gv)$.
We view the orbit $Gv$ as a vector family
indexed by elements of $G$.
(If $v$ has a non-trivial stabilizer in $G$, then
different group elements are mapped to the same vector.)
We consider the matrix
\[ Q = \sum_{g\in G} (gv)(gv)^t = \sum_{g\in G} g(vv^t)g^t.
\]
For any $g\in G$, we have
\begin{align*}
  g^{-1}Q &= \sum_{x\in G} g^{-1}x (vv^t)x^t
          = \sum_{y\in G} y (vv^t) (gy)^t
          = Q g^t.
\end{align*}
Thus $g^t Q^{-1} = Q^{-1}g^{-1}$ and
\[ (gv)^t Q^{-1}hv
   = v^t g^t Q^{-1}hv
   = v^t Q^{-1}(g^{-1}hv).
\]
By Corollary~\ref{c:bdss},
the linear symmetries of $P(G,v)$ come 
from the graph isomorphisms of the vertex and edge colored graph
with vertices $g\in G$ and
colors $w_{g,h}= (gv)^t Q^{-1} (hv)= v^t Q^{-1}(g^{-1}hv)$.
Let $f\colon G\to \reals$ be defined by
\[f(g)= w_{1,g} =  v^t Q^{-1}(gv) = (g^{-1}v)^tQ^{-1}v.
\]
Thus $w_{g,h}= f(g^{-1}h)$.
Corollary~\ref{c:bdss} yields the following result.
\begin{cor}\label{c:affsymchar}
  A permutation $\pi\in \Sym(G)$
  defines a linear symmetry of the orbit polytope
  $P(G,v)$ 
  if and only if
  $f(g^{-1}h)= f(\pi(g)^{-1}\pi(h))$ 
  for all $g$, $h\in G$, 
  where $f(g)= v^t Q^{-1}gv$
  and $Q=\sum_{g\in G}(gv)(gv)^t$.
\end{cor}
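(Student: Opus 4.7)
The corollary is essentially a direct consequence of Corollary~\ref{c:bdss} combined with the computation of $w_{g,h}$ carried out just before the statement. The plan is to unwind the general linear isomorphism criterion in the special case where the index set is $G$ itself and the vector family is $(gv\mid g\in G)$.

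First I would reduce to the situation in which $Q = \sum_g (gv)(gv)^t$ is invertible. By replacing $\reals^d$ with $\aff(Gv)$ if necessary and translating so that $E_1 v = 0$ (Lemma~\ref{l:baryc_orbitpolytope}), we may assume $P(G,v)$ is full-dimensional, centered at the origin, and all affine symmetries are linear, so that $\AGL(P(G,v)) = \GL(Gv)$. Then $Q = VV^t$ with $V = (gv\mid g\in G)$ is positive definite, hence invertible, and the hypothesis of Corollary~\ref{c:bdss} is satisfied.

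Next I would apply Corollary~\ref{c:bdss} directly: a permutation $\pi \in \Sym(G)$ lies in $\LinSym(V)$, i.e.\ is induced by some $A \in \GL(d,\reals)$ with $A(gv) = \pi(g)v$, if and only if $P(\pi)^{-1}WP(\pi) = W$, where $W = V^tQ^{-1}V$. Writing this entrywise, the condition is $w_{\pi(g),\pi(h)} = w_{g,h}$ for all $g,h \in G$. The computation preceding the statement of the corollary shows that $w_{g,h} = (gv)^tQ^{-1}(hv) = v^tQ^{-1}(g^{-1}hv) = f(g^{-1}h)$, using the intertwining identity $g^tQ^{-1} = Q^{-1}g^{-1}$ derived from the $G$-invariance of $Q$. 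Substituting this into the equality $w_{\pi(g),\pi(h)} = w_{g,h}$ yields precisely $f(\pi(g)^{-1}\pi(h)) = f(g^{-1}h)$.

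Finally, I would note that the linear symmetries of $P(G,v)$ are exactly the elements of $\GL(Gv)$, and that every $\pi \in \Sym(G)$ inducing such a symmetry lifts to a permutation of the (possibly repeated) vector family $(gv \mid g\in G)$; conversely, the $A$ produced by Corollary~\ref{c:bdss} is a linear symmetry of $P(G,v)$ in the required sense. There is no real obstacle here: the only thing to keep in mind is the (standard) distinction between $\pi$ acting on the index set $G$ and the induced map on the (multi-)set $Gv$, which is harmless because the condition $f(\pi(g)^{-1}\pi(h)) = f(g^{-1}h)$ only depends on $\pi$ through its effect on the colored graph structure that determines $\GL(Gv)$.
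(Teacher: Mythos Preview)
Your proposal is correct and follows essentially the same route as the paper: the corollary is presented there as an immediate consequence of Corollary~\ref{c:bdss} together with the computation $w_{g,h}=(gv)^tQ^{-1}(hv)=v^tQ^{-1}(g^{-1}hv)=f(g^{-1}h)$ carried out in the paragraph preceding the statement. Your added remarks on reducing to the full-dimensional centered case and on the index set versus the orbit are sound elaborations but not required beyond what the paper already assumes in that section.
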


\section{Generic points}
\label{sec:generic}
Let $G\leq \GL(d,\reals)$ be a finite group.
We now work to define ``generic'' points  in $\reals^d$
with respect to $G$ and affine symmetries of orbit polytopes.
We will see that the non-generic points are 
the zero set of some nonzero polynomials.
Thus they form a proper algebraic subset.
We begin by considering different sets of points which are not
``generic''.
\begin{lemma}\label{l:trivstab}
  The set of points $v$ such that
  \[ G_v:=\{g\in G \mid gv=v\}> \{1\}
  \]
  is a finite union of proper subspaces of $V=\reals^d$.
\end{lemma}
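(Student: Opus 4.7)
The plan is straightforward: write the exceptional set as an explicit finite union over the non-identity elements of $G$, then observe that each individual piece is a proper linear subspace.

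First I would note that $v$ has nontrivial stabilizer in $G$ precisely when there exists some $g \in G \setminus \{1\}$ with $gv = v$, i.e.\ $v \in \ker(g - I)$. Consequently,
\[
\{ v \in \reals^d : G_v > \{1\} \} = \bigcup_{g \in G \setminus \{1\}} \Fix(g),
\]
where $\Fix(g) = \ker(g - I)$ is a linear subspace of $\reals^d$.

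Next I would argue that each $\Fix(g)$ is a \emph{proper} subspace. Since $g \neq 1$ acts as a linear map, $g - I \neq 0$, so the kernel of $g - I$ cannot be all of $\reals^d$. Finally, the union is finite because $G$ is finite, which gives the result.

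The only mild subtlety is distinguishing ``fixed set of a group element'' from ``fixed set of the full group'' (the notation $\Fix G$ is already in use in the paper for the latter), but this is purely cosmetic. There is no real obstacle — the lemma is essentially an immediate unpacking of definitions.
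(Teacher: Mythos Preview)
Your proof is correct and follows exactly the same approach as the paper's: the paper simply observes that for each $g\neq 1$, the fixed space $\{v\in V\mid gv=v\}$ is a proper subspace of $V$. Your version spells out the union and the reason $\ker(g-I)$ is proper a bit more explicitly, but the argument is identical.
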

\begin{proof}
  For every $g\neq 1$, the fixed space 
  $\{v\in V \mid gv=v\}$
  is a proper subspace of $V$.
\end{proof}
Points $v$ with trivial stabilizer $G_v$ are called
``in general position'' by Ellis, Harris and
 Sköldberg~\cite{EllisHS06}.
However, these points are not general enough for our purposes, 
so we do not adopt this terminology.
More important for what follows is the next exception that can occur.
\begin{lemma}\label{l:fulldim}
  Let $m:= \max\{\dim P(G,v) \mid v\in \reals^d\}$. 
  Then
  \[ \{v\in \reals^d\mid \dim P(G,v) < m \}
  \]
  is the zero set of a nonzero ideal 
  of the polynomial ring $\reals[X_1,\dotsc, X_d]$.  
\end{lemma}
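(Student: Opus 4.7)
The plan is to realize $\dim P(G,v)$ as the rank of a matrix whose entries are polynomials in the coordinates of $v$, and then observe that the locus where the rank drops is cut out by the corresponding minor ideal.

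First, I would enumerate $G = \{g_1, \ldots, g_n\}$ and form the $d \times (n-1)$ matrix
\[
  M(v) = \bigl( g_2 v - g_1 v \mid g_3 v - g_1 v \mid \cdots \mid g_n v - g_1 v \bigr).
\]
Since each map $v \mapsto g_i v - g_1 v$ is linear, every entry of $M(v)$ is a homogeneous linear form in the coordinates $X_1, \ldots, X_d$ of $v$. By the definition of the affine hull,
\[
  \dim P(G,v) = \dim \aff\{gv \mid g\in G\} = \rk M(v),
\]
so $m = \max_v \rk M(v)$.

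Next I would note that $\dim P(G,v) < m$ if and only if $\rk M(v) \leq m-1$, which happens exactly when every $m \times m$ minor of $M(v)$ vanishes at $v$. Each such minor is a polynomial in $\reals[X_1, \ldots, X_d]$ (in fact homogeneous of degree $m$). Let $\mathfrak{a}$ be the ideal they generate; then
\[
  \{v \in \reals^d \mid \dim P(G,v) < m\} = V(\mathfrak{a}).
\]

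Finally I would verify that $\mathfrak{a} \neq 0$. By the definition of $m$, there is some $v_0 \in \reals^d$ with $\rk M(v_0) = m$, so at least one $m \times m$ minor is nonzero when evaluated at $v_0$. That minor is therefore a nonzero polynomial, and $\mathfrak{a}$ is a nonzero ideal as claimed. There is no real obstacle here: once one sees that $M(v)$ has linear entries in $v$, the statement reduces to the standard fact that rank-drop loci are algebraic, combined with the trivial nonvanishing coming from the very definition of $m$.
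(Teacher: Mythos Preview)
Your proof is correct and follows essentially the same approach as the paper: express $\dim P(G,v)$ as the rank of a matrix whose entries are linear forms in the coordinates of $v$, and then use that the rank-drop locus is the zero set of the appropriate minor ideal, which is nonzero by the very definition of $m$. The only cosmetic difference is that the paper lifts each $g_i v$ to $\bigl(\begin{smallmatrix} g_i v \\ 1 \end{smallmatrix}\bigr)\in\reals^{d+1}$ and works with the resulting $(d+1)\times n$ matrix, whereas you subtract off $g_1 v$ to get a $d\times(n-1)$ matrix; these are the two standard equivalent ways of computing affine dimension.
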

\begin{proof}
  Enumerate $G=\{g_1,\dotsc, g_n\}$.
  For each $v\in \reals^d$,
  we can  form the $(d+1)\times n$-matrix $V$ with columns 
  $\left( \begin{smallmatrix} 
             g_i v \\ 1 
          \end{smallmatrix}
   \right) \in \reals^{d+1}$.
  The rank $\rk(V)$ of $V$ equals the dimension
  of the affine hull of $Gv$, so $\rk(V)= \dim P(G,v)$.
  We have
  $\rk(V)< m$ if and only if every
  $m\times m$ subdeterminant vanishes.
  If we regard the entries of $v$ 
  as indeterminates $X_1$, $\dotsc$, $X_d$,
  these subdeterminants define
  a number of                                   
  polynomials.
  Since there is a vector $v_0\in \reals^d$ such that 
  $\dim P(G,v_0) = m$,
  these polynomials generate a non-zero ideal.
\end{proof}
Assume that 
$\reals^d=\aff\{gv_0\mid g\in G\}$ for at least one $v_0\in \reals^d$.
Then the vectors $v$, such that
the orbit polytope 
$P(G,v)$ is not full-dimensional, 
form a proper algebraic subset of $\reals^d$.
Let us call a vector $v\in \reals^d$ a 
\defemph{generating point} (for $G$), if
$\aff\{gv\mid g\in G\}=\reals^d$.
(The terminology is justified by the fact that such a vector
 generates $\reals^d$ as a module over the group algebra
 $\reals G$, cf.~Section~\ref{sec:groupalg}.)
The generating points, if there are any at all,
 form an open, dense subset of $\reals^d $.
Notice that if a generating point exists, then
Lemma~\ref{l:baryc_orbitpolytope} yields
that $G$ fixes no non-zero element of $\reals^d$,
and the affine and the linear space generated by any 
$G$\nbd orbit coincide.
Also affine symmetries of orbit polytopes are then
restrictions of linear maps to the polytope.

Let $X=(X_1,\dotsc, X_d)^t$ be a vector of indeterminates.
This is an element of $\reals[X]^d \subseteq \reals(X)^d$,
where $\reals(X)$ is the field of rational functions
in $d$ indeterminates $X_1$, $\dotsc$, $X_d$.
Since $\reals \leq \reals(X) =: K$,
we may view $G$ as a subgroup of $\GL(d,K)$
and $(gX)_{g\in G}$ as a vector family in $K^d$.
We define
the \defemph{generic orbit permutation group}
of 
$G\leq \GL(d,\reals)$
to be the group of linear symmetries of the vector family
$(gX)_{g\in G}$ in $K^d$, which is a subgroup of the group of
all permutations of $G$, 
namely
 \begin{multline*}
  \LinSym((gX)_{g\in G}) = \\
   \{ \sigma\in \Sym(G)
      \mid \exists A \in \GL(d,K)\colon
       \forall g\in G\colon 
       AgX = \sigma(g)X
   \}.
 \end{multline*}
$\LinSym((gX)_{g\in G})$ 
 always contains the subgroup isomorphic to $G$
via left action of $G$ on itself.

Now assume that generating points exist.
It follows that $K^d = \erz{gX\mid g\in G}$,
where the linear span is taken over $K=\reals(X)$.
To every $\sigma\in \LinSym((gX)_g)$ 
corresponds a unique
matrix 
\[A_{\sigma}=A_{\sigma}(X)=A_{\sigma}(X_1,\dotsc,X_d)\in \GL(d,K)
\]
such that $A_{\sigma}gX= \sigma(g)X$.
The map
$\sigma\mapsto A_{\sigma}$ is a group homomorphism,
its image is $\GL(GX)=\GL(\{gX\mid g\in G\})$.
We have
\[ \LinSym( (gX)_g)\iso \GL(GX)
   \quad \text{via}\quad
   \sigma\mapsto A_{\sigma}.
\]

Notice that $G\leq \GL(GX)$.
If $G<\GL(GX)$, then 
the matrices in $\GL(GX) \setminus G$ can have non-constant
entries.

\begin{thm}\label{t:genericsymgroup}
  Let $G\leq \GL(d,\reals)$ be a finite group
  for which generating points exist.
  For every generating point $v$ we have
  \[ \LinSym((gX)_g) \leq \LinSym((gv)_g).\]
  The set of generating points $v$ such that
  $\LinSym((gX)_g) < \LinSym((gv)_g)$
  is a proper algebraic subset of the set of all generating points.
\end{thm}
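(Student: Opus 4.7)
The plan is to apply Corollary~\ref{c:bdss} twice --- once over the rational function field $K=\reals(X_1,\dotsc,X_d)$ and once over $\reals$ at a given generating point $v$ --- and to link the two via specialization $X\mapsto v$. Set $V_X=(gX)_{g\in G}\in K^{d\times G}$, $Q_X=V_XV_X^t\in \reals[X]^{d\times d}$ and $W_X=V_X^tQ_X^{-1}V_X\in K^{G\times G}$. The existence of a generating point $v_0$ is equivalent to $\det Q_X$ being a nonzero polynomial in $\reals[X]$: for a generating $v_0$ the matrix $Q_{v_0}$ is positive definite, hence invertible. In particular $Q_X$ is invertible over $K$, so Corollary~\ref{c:bdss} applies over $K$ and identifies $\LinSym((gX)_g)$ with those $\sigma$ satisfying $P(\sigma)^{-1}W_XP(\sigma)=W_X$. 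To avoid denominators it is convenient to introduce the polynomial matrix $M_X:=\det(Q_X)W_X=V_X^t\,\operatorname{adj}(Q_X)\,V_X\in \reals[X]^{G\times G}$, so the condition is $P(\sigma)^{-1}M_XP(\sigma)=M_X$ as a polynomial identity.

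For the first inclusion, fix a generating point $v$. Then $Q_v$ is positive definite and $\det Q_v\neq 0$, so specialization $X\mapsto v$ makes sense on both $M_X$ and $W_X$; as specialization is a ring homomorphism, a polynomial matrix identity over $\reals[X]$ specializes to the analogous identity over $\reals$. Thus for $\sigma\in \LinSym((gX)_g)$, specializing $P(\sigma)^{-1}M_XP(\sigma)=M_X$ at $X=v$ and dividing by the nonzero scalar $\det Q_v$ gives $P(\sigma)^{-1}W_vP(\sigma)=W_v$, hence $\sigma\in \LinSym((gv)_g)$ by Corollary~\ref{c:bdss} applied over $\reals$.

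For the second part, fix any $\sigma\in \Sym(G)\setminus \LinSym((gX)_g)$. The generic criterion forces $P(\sigma)^{-1}M_XP(\sigma)-M_X$ to be a nonzero element of $\reals[X]^{G\times G}$, so some entry is a nonzero polynomial $p_\sigma\in \reals[X_1,\dotsc,X_d]$. A generating $v$ with $\sigma\in \LinSym((gv)_g)$ satisfies $P(\sigma)^{-1}W_vP(\sigma)=W_v$, hence $P(\sigma)^{-1}M_vP(\sigma)=M_v$, and in particular $p_\sigma(v)=0$. The finite product $p:=\prod_\sigma p_\sigma$ over all $\sigma\notin \LinSym((gX)_g)$ is therefore a nonzero polynomial whose zero set contains every generating $v$ with $\LinSym((gv)_g)>\LinSym((gX)_g)$; intersecting with the open set of generating points yields the desired proper algebraic subset.

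The main technical care lies in the specialization step: one must record that $\det Q_X$ is not identically zero (which is precisely the hypothesis of the theorem) and pass between $W_X$ and $M_X=\det(Q_X)W_X$ so that all identities happen in the polynomial ring $\reals[X]^{G\times G}$. Once this bookkeeping is set up, both statements of the theorem become the two directions of a single polynomial identity, first read over $\reals(X)$ and then over $\reals$.
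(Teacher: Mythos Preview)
Your proof is correct and follows essentially the same route as the paper's: both apply Corollary~\ref{c:bdss} over $K=\reals(X)$ and over $\reals$, and link the two via specialization $X\mapsto v$, using that $\det Q_X$ is a nonzero polynomial. Your use of the adjugate to pass to the polynomial matrix $M_X=\det(Q_X)W_X$ makes the specialization step a bit more explicit than the paper's ``by evaluation'', but this is only a cosmetic difference.
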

\begin{proof}
Let $V(X)$ be the $(d\times G)$\nbd matrix with columns $gX$ for $g\in G$.
We form the matrix
\[ Q(X)=Q(X_1,\dotsc,X_d):= \sum_{g\in G} (gX)(gX)^t
   = V(X)V(X)^t
\]
as in Section~\ref{sec:linisocrit}.
Note that we have
$\erz{gv\mid g\in G}=\reals^d$ if and only if $\det Q(v)\neq 0$.
Since there are generating points,
$\det Q(X)$
is not the zero polynomial.
Thus $Q(X)$ is invertible as a matrix over the function field 
$\reals(X)=\reals(X_1,\dotsc,X_d)$.
Therefore, Corollary~\ref{c:bdss} applies
over $K=\reals(X)$:
For $\sigma\in \Sym(G)$,
we have $\sigma\in \LinSym((gX)_g)$
if and only if  
$\sigma$ leaves the matrix 
$W(X)= V(X)^T Q(X)^{-1} V(X)$ fixed.
If we replace $X$ by a $v$ such that  
$Q(v)$ is invertible,
we get by evaluation the matrix $W(v)$
characterizing $\LinSym((gv)_g)$.
It follows that 
\[\LinSym((gX)_g)\leq \LinSym((gv)_g).
\]
Moreover, for every 
$\sigma\in \Sym(G)\setminus \LinSym((gX)_g)$,
we have 
\[P(\sigma)^{-1}W(X)P(\sigma)\neq W(X).
\]
Thus the generating points $v$ such that
$P(\sigma)^{-1}W(v)P(\sigma)=W(v)$
are zeros of some nonzero polynomials.
This shows the last assertion.
\end{proof}
\begin{defi}
  Let $G\leq \GL(d,\reals)$ be a finite group
  such that at least one orbit polytope
  of $G$ is full-dimensional.
  A point $v\in \reals^d$ is called 
  \defemph{generic} (for $G$),
  if $P(G,v)$ is full-dimensional, 
  if $v$ has trivial stabilizer in $G$ and if
  $\LinSym((gX)_g)=\LinSym((gv)_g)$.
\end{defi}
Now we have the first statement of Theorem~\ref{introt:generic}
from the introduction:
\begin{cor}
  Let $G\leq \GL(d,\reals)$ be a finite group
  for which generating points exist.
  The set of non-generic points for $G$
  is a proper algebraic subset of\/ $\reals^d$
  (that is, the set of common zeros of 
   a non-empty set of non-zero polynomials
   in $\reals[X_1,\dotsc, X_d]$).
\end{cor}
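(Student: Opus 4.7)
The plan is to realize the set of non-generic points as a finite union of three pieces, each of which has already been shown (or is easily shown) to sit inside the zero locus of a single nonzero polynomial; then the union sits inside the zero locus of the product, which remains a nonzero polynomial.

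Unpacking the definition of ``generic,'' a point $v\in\reals^d$ fails to be generic exactly when at least one of the following holds: (A) $P(G,v)$ is not full-dimensional; (B) the stabilizer $G_v$ is nontrivial; (C) $v$ is a generating point with $\LinSym((gX)_g) < \LinSym((gv)_g)$. (If $v$ is not a generating point, then (A) already accounts for it, so restricting (C) to generating points loses nothing.) First I would dispose of (A) by quoting Lemma~\ref{l:fulldim}: the non-full-dimensional points are cut out by a nonzero ideal, hence by a nonzero polynomial $f_A\in\reals[X_1,\dotsc,X_d]$ (take any nonzero element of that ideal). For (B), Lemma~\ref{l:trivstab} presents the set as the finite union $\bigcup_{g\neq 1}\Fix(g)$ of proper subspaces; each such $\Fix(g)$ is the zero set of a finite collection of nonzero linear forms, and the product of all these forms over all $g\in G\setminus\{1\}$ yields a nonzero polynomial $f_B$ vanishing on (B).

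The piece (C) is handled by the last assertion of Theorem~\ref{t:genericsymgroup}: its proof produces, for every single $\sigma\in\Sym(G)\setminus\LinSym((gX)_g)$, a nonzero polynomial $f_\sigma$ (an entry of the matrix $P(\sigma)^{-1}W(X)P(\sigma)-W(X)$, cleared of denominators by multiplying by $\det Q(X)$ if desired) that vanishes on every generating point $v$ for which $\sigma\in\LinSym((gv)_g)$. Since $\Sym(G)$ is finite, the product $f_C := \prod_{\sigma\notin\LinSym((gX)_g)} f_\sigma$ is a nonzero polynomial whose zero set contains every generating point $v$ for which $\LinSym((gX)_g) < \LinSym((gv)_g)$.

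Finally, the product $f := f_A\cdot f_B\cdot f_C$ is nonzero in $\reals[X_1,\dotsc,X_d]$ (the polynomial ring is an integral domain) and vanishes on every non-generic point, so the set of non-generic points is contained in the proper algebraic subset $V(f)$; adjoining the whole ideal of polynomials vanishing on the non-generic locus then exhibits this locus as the common zero set of a nonempty collection of nonzero polynomials, as required. No step is really an obstacle: everything needed has been assembled in the preceding lemmas and in Theorem~\ref{t:genericsymgroup}, and the only care required is the bookkeeping needed to check that each contributing polynomial is genuinely nonzero --- which for (A) and (C) relies on the existence of a generating point (so that the relevant ideals are proper), and for (B) on the fact that each fixed space $\Fix(g)$ with $g\neq 1$ is a proper subspace.
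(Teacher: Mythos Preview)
Your approach is the same as the paper's: decompose the non-generic locus into the three pieces coming from Lemma~\ref{l:trivstab}, Lemma~\ref{l:fulldim}, and Theorem~\ref{t:genericsymgroup}, and observe that each is cut out by nonzero polynomials. The paper's own proof is a single sentence to exactly this effect.

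There is, however, a small but real gap in your final step. By choosing \emph{one} polynomial $f_A$ from the ideal of Lemma~\ref{l:fulldim}, one linear form per fixed space for $f_B$, and one entry $f_\sigma$ per bad permutation for $f_C$, you have only shown that the non-generic locus is \emph{contained in} the hypersurface $V(f_A f_B f_C)$. The closing sentence---``adjoining the whole ideal of polynomials vanishing on the non-generic locus''---does not repair this: the zero set of that ideal is the Zariski closure of the non-generic locus, and you have not yet shown that the locus is Zariski closed, which is precisely the content of the corollary as stated.

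The fix is painless: instead of selecting one polynomial per piece, keep all of them. Piece~(A) equals $V(\det Q)$; piece~(B) equals $\bigcup_{g\neq 1}\Fix(g)$, a finite union of linear subspaces; and for each $\sigma\notin\LinSym((gX)_g)$, clearing denominators in the equation $P(\sigma)^{-1}W(v)P(\sigma)=W(v)$ produces an ideal $I_\sigma$ with $V(I_\sigma)\cap\{\det Q\neq 0\}$ equal to the generating points admitting $\sigma$. Hence
\[
  \text{(A)}\cup\text{(C)} \;=\; V(\det Q)\;\cup\;\bigcup_{\sigma}\bigl(V(I_\sigma)\setminus V(\det Q)\bigr)
  \;=\; V(\det Q)\cup\bigcup_{\sigma}V(I_\sigma),
\]
a finite union of proper algebraic sets. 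Together with (B) this exhibits the non-generic locus \emph{exactly} as a proper algebraic subset, as required.
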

\begin{proof}
  The non-generic points are points in the 
  union of the finitely many
  proper subvarieties defined in Lemmas~\ref{l:trivstab}
  and~\ref{l:fulldim} and in Theorem~\ref{t:genericsymgroup}.
\end{proof}
Thus almost all points are generic for a given group, 
and the generic points form an open, dense subset of $\reals^d$.
All generic points behave in the same way with respect to 
affine symmetries of the $G$\nbd orbit polytope.
This shows that the above definition is 
``the right one'', at least for the purposes of this paper.
However, the orbit polytopes of two generic points are not necessarily
combinatorially equivalent, as an example by Onn~\cite{onn93} shows.
(The points called generic by Onn are generic in our sense,
 but not conversely.)
The orbit polytopes in Onn's example have dimension $5$.

\section{The generic symmetry group}
\label{sec:gensymgroup}
We keep the notation of the last section: 
$G\leq \GL(d,\reals)$ is a finite group
for which full-dimensional orbit polytopes exist.
We call the orbit polytope
$P(G,v)$ of a generic point 
a \defemph{generic orbit polytope}.
In this section we prove Theorems~\ref{introt:generic},
\ref{introt:genclose} and~\ref{introt:absirr} from the introduction.

We begin with a technical result. 
Here, as in the last section, $X=(X_1,\dotsc, X_d)^t$
is a vector of indeterminates.
\begin{prop}\label{p:genericsym_evalhom}
  For every $v\in \reals^d$ with $\reals^d= \erz{gv\mid g\in G}$,
  evaluation at $v$ defines a group homomorphism
  \[ \eval_v\colon \GL(GX) \to \GL(Gv)
  \]
  such that the diagram
  \[ \begin{tikzcd}
        \LinSym((gX)_g) \rar[hook] 
                      \dar 
                      & \LinSym((gv)_g) 
                        \dar 
                        \\
         \GL(GX) \rar{\eval_v}
            & \GL(Gv)
     \end{tikzcd}
  \]
  commutes.
  If $v$ is generic, all maps in the diagram are isomorphisms.
\end{prop}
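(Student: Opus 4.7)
The plan is to exploit the explicit formula for the symmetry matrix from Corollary~\ref{c:bdss}: for $\sigma\in \LinSym((gX)_g)$ one has $A_\sigma(X) = V(X)P(\sigma)V(X)^t Q(X)^{-1}$. Since $V(X)$ and $Q(X)$ have polynomial entries and $Q(X)^{-1}$ equals $(\det Q(X))^{-1}$ times the adjugate matrix, every entry of $A_\sigma(X)$ is a rational function whose denominator divides $\det Q(X)$. Because $v$ is a generating point, $\det Q(v)\neq 0$, so the evaluation $\eval_v(A_\sigma):=A_\sigma(v)$ is a well-defined real matrix.

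To see that $\eval_v$ takes values in $\GL(Gv)$, I would specialize the defining identity. The equation $A_\sigma(X)\cdot gX = \sigma(g)X$ holds in $\reals(X)^d$; multiplying through by $\det Q(X)$ turns each coordinate into a polynomial identity in $\reals[X_1,\dotsc,X_d]$, which may be evaluated at $v$. Since $\det Q(v)\neq 0$, one can divide out again to conclude $A_\sigma(v)\cdot gv = \sigma(g)v$ for every $g\in G$. Hence $A_\sigma(v)$ permutes the spanning set $Gv$, and is invertible with inverse $A_{\sigma^{-1}}(v)$ (by the same argument applied to $\sigma^{-1}$), so $A_\sigma(v)\in \GL(Gv)$. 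These same equalities identify $A_\sigma(v)$ as the unique linear map sending $gv$ to $\sigma(g)v$, which is precisely the image of $\sigma$ under the right column of the diagram; this proves commutativity.

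The homomorphism property follows because $\sigma\mapsto A_\sigma(X)$ is already a group homomorphism into $\GL(d,\reals(X))$, so $A_{\sigma\tau}(X)=A_\sigma(X)A_\tau(X)$. Multiplication of rational functions commutes with evaluation whenever the denominators do not vanish, so specializing at $v$ yields $A_{\sigma\tau}(v)=A_\sigma(v)A_\tau(v)$, i.e.\ $\eval_v$ is multiplicative on $\GL(GX)$.

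Finally, assume $v$ is generic. Then by definition the top horizontal inclusion is an equality, while the left vertical arrow is the isomorphism supplied by Corollary~\ref{c:bdss}. Since the stabilizer of $v$ is trivial, the map $g\mapsto gv$ is a bijection from $G$ onto the vertex set of $P(G,v)$, so every element of $\GL(Gv)$ induces a unique permutation of $G$ lying in $\LinSym((gv)_g)$; hence the right vertical arrow is also an isomorphism. Commutativity of the square then forces $\eval_v$ itself to be an isomorphism. The principal technical subtlety is the denominator bookkeeping in the first step: once one observes that all denominators divide a single power of $\det Q(X)$, which is nonzero at any generating point, everything else reduces to unwinding the definitions and using that $v$ generates $\reals^d$.
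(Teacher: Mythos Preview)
Your proof is correct and follows essentially the same route as the paper's: both use the explicit formula $A_\sigma(X)=V(X)P(\sigma)V(X)^tQ(X)^{-1}$ from Corollary~\ref{c:bdss} to see that entries have denominators dividing $\det Q(X)$, so that evaluation at a generating point is well defined, and then deduce the generic case from triviality of the stabilizer. You supply more detail than the paper (which simply asserts existence of the homomorphism and that commutativity is ``clear''), but the underlying argument is the same.
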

\begin{proof}
  Let $\sigma\in \LinSym((gX)_g)$ and 
  $A_{\sigma}(X)\in \GL(GX)$ be the corresponding 
  matrix.
  Recall from Corollary~\ref{c:bdss} that 
  \[A_{\sigma}(X)= V(X)P(\sigma)V(X)^t Q(X)^{-1}.
  \]
  It follows that the entries of
  $(\det Q(X))\cdot A_{\sigma}(X)$ are polynomials.
  In particular, if 
  $\det Q(v)\neq 0$,
  then evaluation at $v$ is well defined for 
  the entries of $A_{\sigma}(X)$.
  This shows the existence of the homomorphism
  $\GL(GX)\to \GL(Gv)$.
  
  The commutativity of the diagram is clear.
  The right vertical map is always onto, and is injective if
  $v$ has trivial stabilizer. 
  In particular, if $v$ is generic, all maps in the diagram
  are isomorphisms.
\end{proof}
\begin{remark}\label{rem:inject}
  The map $\eval_v$ is always injective.
  This is clear if $v$ has trivial stabilizer.
  However, for the proof of Corollary~\ref{c:symgr_closed}
  below, the case where
  $v$ has a nontrivial stabilizer is essential.
  A proof of injectivity in the general case
  using a continuity argument was communicated to us 
  by Jan-Christoph Schlage-Puchta. 
  We give here a variant of his proof, although the statement
  follows also from  
  Theorem~\ref{t:genericsimilar} below.

  Suppose that $k=k(X)\in \GL(GX)$ is a matrix in the kernel of
  $\eval_v$.
  Write $Gv= \{v=v_1, v_2,\dotsc, v_n\}$ with distinct $v_i$'s.
  Evaluation at $v$ maps $GX$ onto $Gv$, with fibers
  $\Omega_i= \{gX\mid gv = v_i\}$. 
  Since $k(v)$ is the identity,  $k(X)$ maps each fiber $\Omega_i$
  onto itself.
  It follows that $k(X)$ fixes the barycenter
  \[s_i=s_i(X) = \frac{1}{\abs{\Omega_i}}\sum_{gX\in \Omega_i} gX
    \in \reals[X]^d
  \]
  of each $\Omega_i$.
  Evaluation at $v$ maps $s_i$ to $s_i(v)=v_i$.
  But $Gv = \{v_1,\dotsc,v_n\}$ contains an
  $\reals$\nbd basis of $\reals^d$.
  It follows that the corresponding
  $d$-subset of $\{s_1(X), \dotsc, s_n(X)\}$
  is linearly independent over $\reals[X]$ and
  thus a basis of $\reals(X)^d$ over $\reals(X)$.
  But $k(X)s_i(X)=s_i(X)$ for each $i$ and thus
  $k(X)$ fixes a basis. 
  It follows $k(X)= I$ as was to be shown.
  \hfill \qedsymbol
\end{remark}

Let $K=\reals(X)$. 
We may view the map
\begin{align*}
 D_X\colon\LinSym((gX)_g) &\to \GL(GX)\subseteq \GL(d,K),\\
 \sigma &\mapsto D_X(\sigma)= A_{\sigma}(X),
\end{align*}
as a representation of the abstract group
$\LinSym((gX)_g)$ over the field $K$.
Similarly, we can view the composed map $D_v$ in
\[ \begin{tikzcd}
      \LinSym((gX)_g) \dar[swap]{D_X}
                      \arrow{drr}{D_v} 
      \\    
          \GL(GX) \rar{\eval_v} 
        & \GL(Gv) \rar[hook]
        & \GL(d,\reals)
   \end{tikzcd}
\]
as a representation of $\LinSym((gX)_g)$
with image in $\GL(d,\reals)$ or in $\GL(d,K)$.
\begin{thm}\label{t:genericsimilar}
  Let $v$ and $w$ be generating points.
  Then the representations
  $D_X$ and $D_v$ 
  are similar over $K$,
  and the representations 
  $D_v$ and $D_w$ are similar over $\reals$, that is, there exist
  $S\in \GL(d,K)$ and $T\in \GL(d,\reals)$
  such that
  \[D_v(\sigma)= S^{-1}D_X(\sigma)S = T^{-1}D_w(\sigma)T
  \]
  for all $\sigma\in \LinSym((gX)_g)$.
  In particular, $\eval_v\colon \GL(GX)\to\GL(Gv)$
  is injective for every generating point $v$
  (generic or not).
\end{thm}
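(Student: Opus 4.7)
The plan is to construct an explicit intertwiner from $D_v$ to $D_X$ and derive all three claims from it. Concretely, I set
\[ S := V(X)\,V(v)^t\,Q(v)^{-1}, \]
a $d\times d$ matrix with entries in $\reals[X]\subseteq K=\reals(X)$, and will verify that $S\in \GL(d,K)$ and that $S\,D_v(\sigma) = D_X(\sigma)\,S$ for every $\sigma \in H := \LinSym((gX)_g)$. Invertibility of $S$ is a clean specialization argument: evaluating at $X=v$ gives
\[ S\bigr|_{X=v} = V(v)V(v)^t Q(v)^{-1} = Q(v) Q(v)^{-1} = I_d, \]
so $\det S \in \reals[X]$ satisfies $(\det S)(v) = 1 \neq 0$, hence $\det S$ is a nonzero element of $K$.

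For the intertwining identity I rely on the two defining relations $D_X(\sigma)V(X) = V(X)P(\sigma)$ and $D_v(\sigma)V(v) = V(v)P(\sigma)$ from Corollary~\ref{c:bdss} (the second applies on all of $H$ because $H \leq \LinSym((gv)_g)$ by Theorem~\ref{t:genericsymgroup}). Transposing the second relation gives $P(\sigma)V(v)^t = V(v)^t D_v(\sigma^{-1})^t$, so the columns of $P(\sigma)V(v)^t$ lie in the image of $V(v)^t$; consequently the idempotent $\Pi_v := V(v)^t Q(v)^{-1} V(v)$, which acts as the identity on $\mathrm{image}(V(v)^t)$, satisfies $\Pi_v\,P(\sigma)\,V(v)^t = P(\sigma)\,V(v)^t$. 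Substituting the formula $D_v(\sigma) = V(v)\,P(\sigma)\,V(v)^t\,Q(v)^{-1}$ into the left-hand side of the intertwining equation then yields
\[ S\,D_v(\sigma) = V(X)\,\Pi_v\,P(\sigma)\,V(v)^t\,Q(v)^{-1} = V(X)\,P(\sigma)\,V(v)^t\,Q(v)^{-1} = D_X(\sigma)\,S. \]

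Repeating the construction with $w$ in place of $v$ produces an intertwiner between $D_w$ and $D_X$ over $K$, and combining the two gives similarity of $D_v$ and $D_w$ over $K$. Since $D_v$ and $D_w$ both take values in $\GL(d,\reals)$ and $H$ is a finite group, standard character-theoretic descent (for finite groups in characteristic zero, the character classifies representations up to isomorphism, and characters are unchanged by extension of scalars) upgrades $K$\nbd similarity to $\reals$\nbd similarity and provides the matrix $T \in \GL(d,\reals)$. Finally, injectivity of $\eval_v$ for arbitrary generating $v$ drops out immediately: if $k(X) = D_X(\sigma)$ lies in $\ker\eval_v$, then $D_v(\sigma) = k(v) = I_d$, and the intertwining forces $D_X(\sigma) = S\,D_v(\sigma)\,S^{-1} = I_d$, so $k(X) = I_d$. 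The only real technical hurdle is the bookkeeping in the intertwining calculation, in particular the transposition identity; no deeper idea is needed.
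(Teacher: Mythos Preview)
Your proof is correct, but it follows a genuinely different route from the paper's. The paper does not construct an intertwiner at all; instead it observes that for $\sigma\in\LinSym((gX)_g)$ the trace $\chi_X(\sigma)=\tr D_X(\sigma)$ is a priori an element of $\reals(X)$, but must be an algebraic integer because it is a character value of a finite group, hence lies in $\reals$ and is therefore a constant rational function. Evaluating at $X=v$ then gives $\chi_v(\sigma)=\chi_X(\sigma)$ for every generating $v$, so all the $D_v$ (and $D_X$) share a common character and are similar by the standard fact that characters classify representations in characteristic zero.

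Your argument trades the ``character values are algebraic integers'' insight for an explicit matrix $S=V(X)V(v)^tQ(v)^{-1}$ built from the formulas of Corollary~\ref{c:bdss}, together with the pleasant specialization $S|_{X=v}=I_d$ that certifies invertibility. This is more constructive and avoids invoking integrality of character values; on the other hand, you still appeal to character theory (equivalently, Noether--Deuring) to descend the similarity of $D_v$ and $D_w$ from $K$ to $\reals$, so the approaches converge at the final step. The paper's proof is shorter and more conceptual; yours gives an explicit conjugating matrix over $K$, which could be useful if one ever needed to track the intertwiner concretely.
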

\begin{proof}
  Representations over fields of characteristic zero are similar 
  if and only if they have 
  the same character~\cite[Ch.~XVIII, Thm.~3]{langAlg}.
  Thus it suffices to show that $D_X$ and $D_v$ have the same
  character for all generating points $v$.
  Let $\chi_X$ and $\chi_v$ be the characters
  of $D_X$ and $D_v$,
  and let $\sigma\in \LinSym((gX)_g)$.  
  Since $D_X(\sigma)$ is a matrix with entries in $K=\reals(X)$,
  we have $\chi_X(\sigma)\in \reals(X)$.
  But the values of a character of a finite group
    are always algebraic integers
    (in fact, the values are sums of roots of unity~\cite[Lemma~2.15]{isaCTdov}).
  In particular, $\chi_X(\sigma)\in \reals$.
  Since we get $D_v(\sigma)$ by evaluating $D_X(\sigma)$
  at $X=v$,
  we get $\chi_v(\sigma)$ by evaluating $\chi_X(\sigma)$ 
  at $X=v$, that is, $\chi_X(\sigma)=\chi_v(\sigma)$.
  Thus $\chi_X=\chi_v$
  for all generating points $v$,
  and thus the representations $D_X$ and $D_v$ are similar.
\end{proof}
Notice that Theorem~\ref{introt:generic} from the introduction
follows from Theorem~\ref{t:genericsimilar}, together
with the results from Section~\ref{sec:generic}.
In particular, all groups of the form
$\GL(Gv)$ with $v$ generic for $G$ in $\reals^d$
are conjugate in $\GL(d,\reals)$.
Thus every finite group $G$ for which generic vectors exist,
defines a conjugacy class
of finite subgroups of $\GL(d,\reals)$.
Moreover, conjugate subgroups define the same conjugacy class.
We call a subgroup $G\leq \GL(d,\reals)$
\defemph{generically closed} if 
$G=\GL(Gv)$ for at least one generating point $v$.
Of course, by the results so far, this is then true
for all generic $v$.

The next result shows that the symmetry group
$\GL(Gv)$ of a (full-dimensional) orbit $Gv$ is 
generically closed 
(Theorem~\ref{introt:genclose} from the introduction).
\begin{cor}\label{c:symgr_closed}
  Let $v$ be a generating point for $G$,
  write $\hat{G}= \GL(Gv)$ and let
  $w$ be generic for $\hat{G}$.
  Then $\GL(\hat{G}w)=\hat{G}$.
\end{cor}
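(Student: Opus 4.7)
My plan is to reduce the corollary to the single group identity $\hat{G} = \GL(\hat{G}X)$, for then Proposition~\ref{p:genericsym_evalhom}, applied at the generic point $w$, immediately transfers this identity to $\GL(\hat{G}w)$. So the core of the argument will be to establish that identity by evaluating the generic orbit permutation group of $\hat{G}$ at the (generating, but typically non-generic) point $v$.

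First I would observe that $\hat{G}v = Gv$: the inclusion $\hat{G}v \supseteq Gv$ follows from $G \leq \hat{G}$, while $\hat{G}v \subseteq Gv$ holds because $\hat{G}$ permutes the vertex set $Gv$ of $P(G,v)$. Hence $\aff(\hat{G}v) = \aff(Gv) = \reals^d$ and $v$ is a generating point for $\hat{G}$ (in general not generic, since $\hat{G}_v$ may be larger than $\{1\}$). Theorem~\ref{t:genericsimilar}, applied to the group $\hat{G}$ and the generating point $v$, then yields injectivity of
\[ \eval_v\colon \GL(\hat{G}X) \hookrightarrow \GL(\hat{G}v) = \GL(Gv) = \hat{G}. \]

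Next I would use that $\hat{G}$ embeds into $\GL(\hat{G}X)$ as constant matrices: for $h, h' \in \hat{G}$ one has $h\cdot(h'X) = (hh')X$, so $h$ permutes the family $(h'X)_{h'\in \hat{G}}$ by left multiplication and lies in $\GL(\hat{G}X)$. Since $h$ is constant, $\eval_v(h) = h$, so the composition $\hat{G}\hookrightarrow \GL(\hat{G}X) \xrightarrow{\eval_v} \hat{G}$ is the identity. Combined with injectivity of $\eval_v$ this forces $|\GL(\hat{G}X)| = |\hat{G}|$, hence $\GL(\hat{G}X) = \hat{G}$. Applying Proposition~\ref{p:genericsym_evalhom} to the generic point $w$ for $\hat{G}$ then gives an isomorphism $\eval_w\colon \GL(\hat{G}X) \xrightarrow{\sim} \GL(\hat{G}w)$; combined with the trivial inclusion $\hat{G} \leq \GL(\hat{G}w)$ and equality of orders, this concludes $\GL(\hat{G}w) = \hat{G}$.

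The only non-formal input, and hence the main obstacle, is the injectivity of $\eval_v$ for a generating point that is \emph{not} generic; this does not follow from the generic diagram in Proposition~\ref{p:genericsym_evalhom} (where the right vertical map has a nontrivial kernel) and rests instead on the character-theoretic identity $\chi_X = \chi_v$ established in the proof of Theorem~\ref{t:genericsimilar}, or equivalently on the continuity/barycenter argument sketched in Remark~\ref{rem:inject}.
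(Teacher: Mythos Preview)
Your proof is correct and follows essentially the same route as the paper's: both use that $v$ is a generating point for $\hat{G}$, invoke the injectivity of $\eval_v\colon \GL(\hat{G}X)\to \GL(\hat{G}v)=\hat{G}$ from Theorem~\ref{t:genericsimilar}, combine this with the isomorphism $\GL(\hat{G}X)\cong \GL(\hat{G}w)$ for generic $w$ from Proposition~\ref{p:genericsym_evalhom}, and close with the trivial inclusion $\hat{G}\leq \GL(\hat{G}w)$. Your extra observation that $\hat{G}$ sits inside $\GL(\hat{G}X)$ as constant matrices fixed by $\eval_v$ (yielding the equality $\GL(\hat{G}X)=\hat{G}$ rather than merely an injection) is a pleasant elaboration, but the argument is the same in substance, and you correctly flag that the one nontrivial ingredient is the injectivity of $\eval_v$ at a non-generic generating point.
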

\begin{proof}
  By assumption, $v$ is a generating point for $G$,
  and thus for $\hat{G}$.
  By Proposition~\ref{p:genericsym_evalhom}
  applied to $\hat{G}$,
  it follows that
  $\GL(\hat{G}w)\iso \GL(\hat{G}X)$.
  Theorem~\ref{t:genericsimilar} yields
  that the evaluation at $v$ maps
  $\GL(\hat{G}X)$
  injectively into
  $\GL(\hat{G}v)$.
  But $\hat{G}v=Gv$ and $\GL(Gv)=\hat{G}$.
  Thus $\GL(\hat{G}w) \iso \GL(\hat{G}X)$ is isomorphic to a subgroup
  of $\hat{G}$.
  Since $\hat{G}\leq \GL(\hat{G}w)$,
  the result follows.
\end{proof}

We have one other sufficient criterion for a group
to be generically closed.
Recall that a group $G\leq \GL(d,\reals)$
(or a representation $D\colon G\to \GL(d,\reals)$
of an abstract group $G$)
is called \defemph{absolutely irreducible},
if for every field $K\supseteq \reals$,
the space $K^d$ has no $G$-invariant subspaces
besides $\{0\}$ and $K^d$.
A group $G$ is absolutely irreducible
if and only if the centralizer of $G$ in $\GL(d,\reals)$
consists only of the scalar matrices~\cite[Theorem~9.2]{isaCTdov}.
The following is Theorem~\ref{introt:absirr} from the introduction.
\begin{thm}\label{t:absirr_closed}
  Suppose that $G\leq \GL(d,\reals)$
  is absolutely irreducible. 
  If $v$ is generic for $G$, then
  $\GL(Gv)=G$.
\end{thm}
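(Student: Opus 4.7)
The plan is to leverage Theorem~\ref{t:genericsimilar} together with Schur's lemma. Let $H = \LinSym((gX)_g)$, which contains $G$ via the left regular embedding, and consider the two representations $D_X \colon H \to \GL(d,K)$ and $D_v \colon H \to \GL(d,\reals) \subseteq \GL(d,K)$ of $H$, where $K = \reals(X)$. By Theorem~\ref{t:genericsimilar} these are similar over $K$: there exists $S \in \GL(d,K)$ with $D_v(\sigma) = S^{-1} D_X(\sigma) S$ for all $\sigma \in H$.

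First I would observe that on the subgroup $G \leq H$, both $D_X$ and $D_v$ restrict to the identity inclusion $G \hookrightarrow \GL(d,\reals)$. Indeed, $D_X(g)$ is the unique matrix $A_g \in \GL(d,K)$ satisfying $A_g \cdot hX = ghX$ for all $h \in G$, and the constant matrix $g$ itself does the job, so $A_g = g$. The similarity relation therefore forces $Sg = gS$ for every $g \in G$. Since absolute irreducibility of $G$ over $\reals$ is preserved under extension of scalars to $K$, Schur's lemma implies that the $G$-centralizer in $\GL(d,K)$ consists only of scalar matrices, so $S = \lambda I$ for some nonzero $\lambda \in K$. Consequently $D_v = D_X$ on all of $H$.

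This means that for every $\sigma \in H$, the entries of $A_\sigma(X) = D_X(\sigma)$ agree with those of $D_v(\sigma) \in \GL(d,\reals)$ and are therefore constants in $\reals$. Specializing the defining identity $A_\sigma(X) \cdot X = \sigma(1) \cdot X$ and using that $X_1, \dotsc, X_d$ are linearly independent over $\reals$, one obtains $A_\sigma = \sigma(1)$ as matrices, so $A_\sigma \in G$. Thus $D_X(H) \subseteq G$; combined with injectivity of $D_X$ and with $G \leq H$, this yields $H = G$. The commutative diagram of Proposition~\ref{p:genericsym_evalhom}, applied at the generic $v$, then delivers the desired equality $\GL(Gv) = G$.

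The crux is not really a computational obstacle but the observation that the similarity $S$ of Theorem~\ref{t:genericsimilar}, which a priori is an arbitrary element of $\GL(d,K)$ with possibly non-constant rational-function entries, is pinned down to a scalar by the absolute irreducibility hypothesis. Once this is realized, the conclusion that $H$ cannot properly enlarge $G$ is essentially automatic from the linear independence of the coordinate indeterminates.
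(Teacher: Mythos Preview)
Your proof is correct and shares the paper's core idea: apply Theorem~\ref{t:genericsimilar} to obtain an intertwiner $S$, observe that $S$ centralizes $G$ (since both representations restrict to the identity on $G$), and invoke absolute irreducibility to force $S$ scalar. The difference lies in the endgame. The paper compares $D_v$ and $D_w$ for two \emph{real} generating points, deduces that the image $\hat G := D_v(H)$ does not depend on $v$, and then finishes with a ``doubly generic'' trick: pick $v$ generic for both $G$ and $\hat G$, so that $\hat G v = Gv$ and trivial stabilizers force $\hat G = G$. You instead compare $D_X$ and $D_v$ over $K=\reals(X)$, conclude that each $A_\sigma(X)$ is already a constant real matrix, and then read off $A_\sigma = \sigma(1)\in G$ directly from the identity $A_\sigma X = \sigma(1)X$ and the linear independence of the indeterminates. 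Your route is a bit more direct---it avoids the auxiliary choice of a point generic for $\hat G$---at the small cost of needing absolute irreducibility over the function field $K$ rather than just over $\reals$ (which is of course immediate). Both arguments are short and essentially equivalent in strength.
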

\begin{proof}
  Let $v$ and $w$ be generating points.
  By Theorem~\ref{t:genericsimilar}, there is a matrix
  $S$ such that
  $D_v(\sigma)= S^{-1}D_w(\sigma)S$
  for all $\sigma\in \LinSym((gX)_g)$.
  For $g\in G$, the group $\LinSym((gX)_g)$
  contains the permutation $\lambda_g$
  that maps $x\in G$ to $gx$,
  and we have $D_v(\lambda_g)= g = D_w(\lambda_g)$.
  It follows that $S^{-1}gS=g$ for all $g\in G$.
  Since $G$ is absolutely irreducible, this yields
  $S\in \reals $ and thus
  $D_v(\sigma)=D_w(\sigma)$ for all $\sigma$.
  It follows that 
  $\hat{G}:=D_v(\LinSym((gX)_g))$
  is independent of $v$, and thus
  $\hat{G}=\GL(Gv)=\GL(Gw)$ for all generic points $v$ and $w$.
  
  Now pick a point $v$ that is generic for both $G$ and $\hat{G}$.
  Then $\hat{G}v= Gv$ since $\hat{G}=\GL(Gv)$.
  Since $v$ has trivial stabilizer in both groups,
  it follows that $\hat{G}=G$.
\end{proof}

\section{Representation polytopes}
\label{sec:reppolytopes}
Let $G$ be a finite group and $D\colon G\to \GL(d,\reals)$
a real representation.
The associated \defemph{representation polytope} $P(D)$
is the convex hull
of the matrices $D(g)$ in the space of all
$d\times d$\nbd matrices~\cite{guralnickperkinson06}.
Of course, a representation polytope is a very special orbit polytope,
namely
\[ P(D)= \conv\{ D(g)\mid g\in G\} = P(G,I),\]
where $I$ is the identity matrix 
and $g\in G$ acts on the vector space of matrices
by left multiplication with $D(g)$.
However, in this section we show that representation polytopes
are in fact generic orbit polytopes in a suitable space.
We will see that their  affine symmetry group is strictly bigger than $G$,
except perhaps when $G$ is an elementary $2$-group.

We need the following technical notion of
equivalence between orbit polytopes 
of the same group $G$.
\begin{defi}\label{defi:affgequiv}
  Let $G$ be an (abstract) finite group
  acting affinely on two spaces $V$ and $W$,
  and let $v\in V$ and $w\in W$.
  We say that the orbit polytopes
  $P(G,v)$ and $P(G,w)$ are
  \defemph{affinely $G$\nbd equivalent}
  if there is an affine isomorphism
  $\alpha\colon P(G,v)\to P(G,w)$
  such that $\alpha(gx)=g\alpha(x)$
  for all $x\in P(G,v)$ and $g\in G$.
\end{defi}
This is stronger than mere affine equivalence.
For example, if $G=D_4=\erz{t,s\mid s^2=t^4=1, sts=t^{-1}}$,
the orbit polytope of a point $v$ with
$sv=v$ and the orbit polytope of a point
$w$ with $stw=w$ are affinely equivalent
(both are squares),
but not as $G$\nbd sets.
This follows from the fact that $s$ fixes vertices of
$P(G,v)$, but not of $P(G,w)$.
Of course, in this case, there is an automorphism
$\phi$ of the group mapping $s$ to $st$, 
and so we can find an affine isomorphism 
$\alpha\colon P(G,v)\to P(G,w)$
with $\alpha(gx) = \phi(g)\alpha(x)$.
This leads to a weaker notion of equivalence
(cf.~\cite{BaumeisterGrueninger15}),
but we will not need this here.

For another example, let $G= C_4 \times V_4$ be the direct product
of $C_4$, a cyclic group of order $4$, and the Klein four group $V_4$.
Both a square and a $3$\nbd simplex are orbit polytopes of $C_4$ and $V_4$,
and thus we get the direct product of the square and the $3$\nbd simplex
as an orbit polytope of $G$ in two different ways.
These are not affinely $G$\nbd equivalent, not even in a weaker sense
as in the last example.

\begin{lemma}\label{l:reppol_iso}
  Let $D\colon G\to \GL(d,\reals)$ be
  a representation and $A\in \GL(d,\reals)$.
  Then $P(G,A)$ and $P(D)=P(G,I)$  are affinely $G$\nbd equivalent.
\end{lemma}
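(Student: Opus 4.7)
The plan is to exhibit an explicit affine isomorphism and verify it intertwines the $G$-actions. The group $G$ acts on the matrix space by left multiplication via $D$, so
\[ P(D) = \conv\{D(g) \mid g\in G\}
   \quad\text{and}\quad
   P(G,A) = \conv\{D(g)A\mid g\in G\}.
\]
The obvious candidate is the map $\alpha\colon M \mapsto MA$, i.e.\ right multiplication by $A$. Since $A\in \GL(d,\reals)$, this is an invertible linear map on $\mat_d(\reals)$, hence certainly affine.

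First I would check that $\alpha$ sends the vertex set of $P(D)$ bijectively to that of $P(G,A)$: by construction $\alpha(D(g)) = D(g)A$, and as $A$ is invertible this is a bijection $\{D(g)\mid g\in G\}\to \{D(g)A\mid g\in G\}$. Taking convex hulls gives an affine isomorphism $P(D)\to P(G,A)$.

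Second I would verify $G$-equivariance. The action on either polytope is $g\cdot M = D(g)M$, so
\[ \alpha(g\cdot M) = \alpha(D(g)M) = D(g) M A = D(g)\alpha(M) = g\cdot \alpha(M),
\]
and this holds for all $M$ in the ambient matrix space, in particular on $P(D)$. This is exactly the condition in Definition~\ref{defi:affgequiv}.

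There really is no obstacle here — the statement is essentially a bookkeeping observation that the right action of $\GL(d,\reals)$ on matrices commutes with the left action of $D(G)$, and right multiplication by an invertible matrix is an affine isomorphism of the matrix space. The only subtlety worth flagging is that one should distinguish the two sides: the $G$-action is by \emph{left} multiplication by $D(g)$, while the equivariance is achieved by a \emph{right} multiplication, which is why the two commute.
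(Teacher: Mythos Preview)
Your proof is correct and is essentially identical to the paper's: the paper also uses right multiplication by $A$ as the affine $G$-equivalence, noting that it commutes with the left $G$-action and that right multiplication by $A^{-1}$ furnishes the inverse. There is nothing to add.
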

\begin{proof}
  Multiplication from the right with $A$ yields an
  affine map from $P(D)=P(G,I)$ to $P(G,A)$ commuting with
  the left action of $G$, and multiplication with $A^{-1}$
  yields the inverse.
\end{proof}
Since representation polytopes are special cases 
of orbit polytopes, the notions of the last sections apply.
Of course, the subspace $X=\erz{D(g)\mid g\in G}\leq \reals^{d\times d}$ 
generated by the image of a representation is in general
(much) smaller than the space of all matrices.
(We have $X=\reals^{d\times d}$ if and only if $D$ is 
 \emph{absolutely irreducible}~\cite[Theorem~9.2]{isaCTdov}.) 
Recall that $A\in X$ is  \emph{generic},
if $X= \erz{D(g)A\mid g\in G}$, if
$A$ has trivial stabilizer in $G$ and if
the linear symmetry group 
$\LinSym((D(g)A)_g)$ contains only the generic
permutations.
The stabilizer of any $A$ contains at least the 
\emph{kernel} of $D$ in $G$, that is, the normal subgroup of elements
$n\in G$ such that $D(n)=I$.
Therefore, we assume now that $D$ is
\defemph{faithful}, that is,
$\ker D=\{1_G\}$.
\begin{prop}\label{p:allgeneric}
  Let $D\colon G\to \GL(d,\reals)$ be a faithful representation
  and let $X=\erz{D(G)}\leq \reals^{d\times d}$ 
  be the subspace generated by the image of $G$.
  If $A\in X$ is a generating point for $G$,
  then $P(G,A)$ and $P(D)$ are affinely $G$\nbd equivalent.
  In particular, all generating points are generic,
  and all generic orbit polytopes are affinely equivalent.
\end{prop}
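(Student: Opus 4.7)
The plan is to exploit that $X=\erz{D(G)}$ is in fact a subalgebra of $\reals^{d\times d}$ (since $D(g)D(h)=D(gh)\in X$ and $X$ is the linear span of $D(G)$), and to use right multiplication to produce explicit $G$\nbd equivariant linear isomorphisms between orbit polytopes of different generating points. Given $A\in X$, define $\phi_A\colon X\to X$ by $\phi_A(B)=BA$. This is linear, $G$\nbd equivariant with respect to the left-multiplication action $B\mapsto D(g)B$ defining both orbit polytopes, and sends $D(g)$ to $D(g)A$, hence $P(D)=P(G,I)$ onto $P(G,A)$. If $A$ is a generating point, then $\aff(D(G)A)=X$, and in particular $\erz{D(G)A}=X$; since this span is contained in the image of $\phi_A$, the map $\phi_A$ is surjective and therefore invertible, so $\phi_A\in\GL(X)$. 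Thus $\phi_A$ restricts to an affine $G$\nbd equivariant isomorphism $P(D)\to P(G,A)$, which proves the first assertion.

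For the ``in particular'' claim I need to show that every generating point is generic. Faithfulness of $D$ combined with injectivity of $\phi_A$ forces the stabilizer of $A$ to be trivial: if $D(g)A=A$ then $\phi_A(D(g)-I)=0$, so $D(g)=I$ and hence $g=1$. Taking $A=I$ in the argument above also shows that $\aff(D(G))=X$, so $I$ itself is a generating point. Given a second generating point $B\in X$, the composition $\psi=\phi_B\circ\phi_A^{-1}\in\GL(X)$ satisfies $\psi(D(g)A)=D(g)B$ for every $g$. A routine check then shows that $M\in\GL(X)$ realizes a permutation $\sigma\in\Sym(G)$ for the orbit $D(G)A$ if and only if $\psi M\psi^{-1}$ realizes $\sigma$ for the orbit $D(G)B$; consequently $\LinSym((D(g)A)_g)=\LinSym((D(g)B)_g)$ as subgroups of $\Sym(G)$ for any two generating points $A$, $B$.

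Theorem~\ref{t:genericsymgroup} says that this common subgroup of $\Sym(G)$ contains the generic symmetry group, with equality on an open dense subset of the generating points (the generic ones, by definition). Since $\LinSym((D(g)A)_g)$ is constant as $A$ runs over generating points, this common value must coincide with the generic symmetry group; hence every generating point is already generic. The final assertion then follows by transitivity of affine $G$\nbd equivalence applied to the first assertion: any two generic orbit polytopes are affinely $G$\nbd equivalent to $P(D)$, and therefore to each other. The only delicate point I anticipate is the constancy step, where one must carefully distinguish abstract permutations of $G$ from the linear automorphisms of $X$ realizing them; everything else is elementary linear algebra together with the fact from Lemma~\ref{l:baryc_orbitpolytope} that for a generating point the linear and affine hulls of the orbit coincide.
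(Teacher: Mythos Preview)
Your proof is correct and follows essentially the same route as the paper: both use right multiplication by $A$ as the $G$\nbd equivariant isomorphism, and then observe that $\LinSym((D(g)A)_g)$ is the same subgroup of $\Sym(G)$ for all generating points, which forces every generating point to be generic. The paper is slightly slicker at one spot: from $I\in X=\erz{D(G)A}$ it writes $I=\sum_g r_g D(g)A$, so $A$ is actually invertible in $\GL(d,\reals)$ with $A^{-1}\in X$, and then Lemma~\ref{l:reppol_iso} applies directly; your surjectivity argument for $\phi_A$ on $X$ is an equivalent reformulation.

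One small remark: the sentence ``Taking $A=I$ in the argument above also shows that $\aff(D(G))=X$'' does not parse---substituting $A=I$ into an argument that \emph{assumes} $A$ is a generating point is circular. What you presumably mean is that the invertible linear map $\phi_A$ sends $\aff(D(G))$ onto $\aff(D(G)A)=X$, whence $\aff(D(G))=X$ as well. In any case this step is not needed for the rest of your argument, since your constancy proof already compares two arbitrary generating points via $\psi=\phi_B\circ\phi_A^{-1}$ and never uses that $I$ is one of them.
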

\begin{proof}
  From $I\in X=\erz{D(g)A\mid g\in G}$ it follows that
  $I = \sum_{g} r_g D(g)A$ for some $r_g\in \reals$.
  But then $A$ is invertible with inverse
  $\sum_g r_g D(g)$.
  The first claim follows from Lemma~\ref{l:reppol_iso}.
  Since all full-dimensional orbit polytopes in $X$ are 
  affinely $G$\nbd isomorphic, their affine symmetry groups
  are conjugate, and its vertices have trivial stabilizer.
  Thus all generating points are generic.
\end{proof}
We mention in passing that $A\in X$ is a generating point
in $X$ for $G$ if and only if it is invertible.
This follows since $X$ is a subalgebra of $\reals^{d\times d}$.

In particular, the representation polytope $P(D)$
itself is generic in its space.
The affine symmetry group of a representation polytope
is always bigger than $D(G)$, except perhaps when
$G$ is an elementary abelian $2$\nbd group:
\begin{prop}\label{p:reppol_bigsym}
  Let 
  $D\colon G\to \GL(d,\reals)$
  be a faithful representation.  
  Then the affine symmetry group $\AGL(P(D))$
  contains the following maps:
  \begin{enumthm}
  \item \label{it:lmultsym}
        for every $h\in G$, 
        the map sending 
        $D(g)$ to $D(h)D(g)$,
  \item \label{it:rmultsym}
        for every $h\in G$, 
        the map sending 
        $D(g)$ to $D(g)D(h)$,
  \item \label{it:invsym}
        the map  
        sending $D(g)$ to $D(g^{-1})$.
  \end{enumthm}
  We have
  $\abs{\AGL(P(D))}\geq 2 \abs{G} \abs{G:\Z(G)}$,
  except possibly when $G$ is an elementary abelian $2$-group.
\end{prop}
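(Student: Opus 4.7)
The plan is to verify the three listed symmetries directly and then to produce the lower bound by identifying a large subgroup of $\AGL(P(D))$ generated by (i) and (ii) and arguing that (iii) almost never lies in it.

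Items (i) and (ii) are immediate: for each $h \in G$, the left and right multiplications $L_h \colon M \mapsto D(h)M$ and $R_h \colon M \mapsto MD(h)$ are linear endomorphisms of $\reals^{d\times d}$ permuting the vertex set $\{D(g)\mid g \in G\}$ (namely, $L_h$ sends $D(g)$ to $D(hg)$ and $R_h$ sends $D(g)$ to $D(gh)$), so their restrictions are affine symmetries of $P(D)$.

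For (iii) I would first reduce to the case that $D$ is orthogonal. By the usual averaging argument there exists $T \in \GL(d,\reals)$ with $D'(g) := TD(g)T^{-1}$ orthogonal for every $g$. Conjugation $M \mapsto TMT^{-1}$ is a linear isomorphism of $\reals^{d\times d}$ sending $D(g)$ to $D'(g)$ and hence induces a $G$\nbd equivariant affine isomorphism $P(D) \to P(D')$, producing an isomorphism $\AGL(P(D)) \iso \AGL(P(D'))$. When $D'$ is orthogonal, the transposition $M \mapsto M^{t}$ is linear on $\reals^{d\times d}$ and sends $D'(g)$ to $D'(g)^{-1} = D'(g^{-1})$, which is exactly the desired symmetry.

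For the cardinality bound I would package (i) and (ii) into
\[
   \Phi \colon G \times G \to \AGL(P(D)),
   \qquad
   \Phi(h,k)(M) = D(h)\, M\, D(k)^{-1},
\]
which is a group homomorphism. Evaluating $\Phi(h,k) = \id$ at $M = I$ forces $D(h) = D(k)$, hence $h = k$ by faithfulness, after which $D(h)$ must commute with every $D(g)$. Since $D$ is faithful, $\Z(D(G)) = D(\Z(G))$, so $\ker \Phi = \{(z,z) \mid z \in \Z(G)\}$ and $\abs{\Phi(G \times G)} = \abs{G}\cdot\abs{G:\Z(G)}$. The last step is to check that the inversion symmetry $\iota$ from (iii) does not belong to $\Phi(G \times G)$ unless $G$ is an elementary abelian $2$\nbd group: if $\iota = \Phi(h,k)$, then $M = I$ yields $k = h$, and then $D(g^{-1}) = D(hgh^{-1})$ for every $g$, so by faithfulness conjugation by $h$ inverts every element. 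Computing $h(g_1 g_2)h^{-1}$ in two ways forces $G$ abelian, and then $g = hgh^{-1} = g^{-1}$ for every $g$, i.e., $G$ has exponent $2$. Outside that case, the coset $\iota \cdot \Phi(G\times G)$ is disjoint from $\Phi(G \times G)$, yielding $\abs{\AGL(P(D))} \geq 2\abs{G}\cdot\abs{G:\Z(G)}$. The only genuinely nontrivial ingredient is the orthogonalization in (iii); everything else is formal once $\Phi$ is in hand.
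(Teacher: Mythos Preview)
Your proof is correct and follows essentially the same route as the paper's: both establish (i)--(ii) trivially, handle (iii) by reducing to the orthogonal case and using transposition, and obtain the bound by computing the image of $G\times G$ under left-right multiplication and checking that inversion lies outside it unless $G$ has exponent~$2$. The only cosmetic difference is that the paper identifies the symmetries with permutations in $\Sym(G)$ (via $l(g)$, $r(h)$, $\eps$) rather than working directly with the linear maps, and gives the explicit formula $A\mapsto S(S^{-1}AS)^tS^{-1}$ for the inversion symmetry instead of passing through the isomorphism $\AGL(P(D))\iso\AGL(P(D'))$.
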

\begin{proof}
  Left and right multiplication by $D(h)$ is a linear map
  on $\reals^{d\times d}$ and permutes
  the vertices $D(g)$, thus~\ref{it:lmultsym} and~\ref{it:rmultsym}.
  
  To see~\ref{it:invsym}, assume first that 
  $D(g)$ is orthogonal for all $g\in G$.
  Then the linear map sending a matrix $A$ to
  its transposed matrix $A^t$ sends
  $D(g)$ to $D(g)^t = D(g^{-1})$ and thus
  maps $P(D)$ onto itself.
  
  In general, the representation $D$ is similar to an
  orthogonal one~\cite[Theorem~4.17]{isaCTdov}, 
  so there is a non-singular matrix $S$ such that 
  $S^{-1}D(g)S$ is orthogonal for all $g\in G$.
  Then the linear map $A\mapsto S(S^{-1}AS)^t S^{-1}$
  sends $D(g)$ to 
  $S(S^{-1}D(g)S)^tS^{-1}= S(S^{-1}D(g)S)^{-1}S^{-1}=D(g^{-1})$.

  To estimate the order of the subgroup of $\AGL(P(D))$
  generated by the maps described in~\ref{it:lmultsym},
  \ref{it:rmultsym} and~\ref{it:invsym},
  we identify it with a subgroup of $\Sym(G)$.
  For every $g\in G$, let $l(g)\in \Sym(G)$
  be left multiplication with $g$, 
  and $r(g)\in \Sym(G)$ right multiplication
  with $g$.
  Every $l(g)$ commutes with every $r(h)$.
  We have $l(g)r(h)= \id_{G}$ if and only if
  $gxh=x$ for all $x\in G$,
  which is the case if and only if 
  $g=h^{-1}$ and $g\in \Z(G)$.
  Thus $\abs{l(G)r(G)}=\abs{G}\abs{G:\Z(G)}$.
  
  Finally, the map $\eps$ sending $x$ to $x^{-1}$ 
  is in $l(G)r(G)$ if and only if there
  are $g$ and $h\in G$ such that
  $x^{-1}= gxh$ for all $x\in G$.
  The case $x=1$ yields then $g=h^{-1}$,
  and we have
  $(xy)^{-1}=(xy)^h= x^{-1}y^{-1}$ for all $x$, $y\in G$.
  Thus $G$ is abelian and every element has order $2$.
  Thus $G$ is an elementary abelian $2$-group.  
  In every other case, we have
  $\abs{\erz{\eps,l(G),r(G)}}\geq 2\abs{G}\abs{G:\Z(G)}$.
\end{proof}
\begin{remark}
  The map $\eps$ above normalizes $l(G)r(G)$.
  Thus $\erz{\eps,l(G),r(G)}$ has order
  $2\abs{G}\abs{G:\Z(G)}$, except when $G$ is 
  an elementary abelian $2$\nbd group.
\end{remark}
Later, when we have shown how to compute
the affine symmetries of representation polytopes 
from a certain character,
we will construct representation polytopes
of elementary abelian $2$\nbd groups that have no additional affine 
symmetries.

\section{Orbit polytopes as subsets of the group algebra}
\label{sec:groupalg}
Let $G$ be an (abstract) finite group.
For each representation
$D\colon G\to \GL(d,\reals)$ and for each $v\in \reals^d$
we get an orbit polytope $P(G,v):=P(D(G),v)$.
We may ask, for example, whether there is a representation
of $G$ and an orbit polytope $P(G,v)$
such that the affine symmetry group of $P(G,v)$
is isomorphic to $G$.
The present section provides some basic results for dealing
with such questions.

We will use the module theoretic view of representation theory
and the basic structure theory of 
semisimple rings~\cite{isaCTdov,langAlg}.
Recall that any representation $D\colon G\to \GL(V)$
endows $V$ with the structure of a left module
over the group algebra $\reals G$, which is 
by definition the set of formal sums
\[ \sum_{g\in G} r_g g,\quad r_g\in \reals,
\]
together with component-wise addition and multiplication
extended distributively from multiplication in the group.
Conversely, any left $\reals G$ module $V$ defines a 
representation $D\colon G\to \GL(V)$, 
where $D(g)\colon V\to V$ is the map $v\mapsto gv$.
Similar representations correspond to isomorphic
$\reals G$\nbd modules and conversely.


The group algebra has a canonical inner product  defined by
$\skp{g,h}=\delta_{gh}$ for $g$, $h\in G$.
This inner product can be used to show that
any left ideal of $\reals G$ has a left ideal complement
(Maschke's theorem for $\reals G$):
namely, the orthogonal complement of a (left) ideal
 is again a (left) ideal.

Let
$V$ be a left $\reals G$-module and $v\in V$.
The $\reals$\nbd subspace
$ \erz{Gv}=\erz{gv \mid g\in G}$ generated by the $G$\nbd orbit of $v$
equals 
\[ \erz{Gv } =
   \{\sum_{g\in G} r_g gv \mid r_g\in \reals\}
   = \{av\mid a \in \reals G\} 
   =\reals G v.
\]
This is the \defemph{cyclic} $\reals G$\nbd module generated by $v$.
The orbit polytope $P(G,v)$ lives in this submodule of $V$.
Notice that when $\alpha\colon V\to W$ is an isomorphism
between two $\reals G$-modules, then 
$\alpha$ maps an orbit polytope $P(G,v)$ to the
orbit polytope $P(G,\alpha(v))$, which is 
affinely $G$\nbd equivalent to $P(G,v)$ in the sense of 
Definition~\ref{defi:affgequiv}.
Conversely, assume that $P(G,v)$ and $P(G,w)$ are affinely $G$\nbd equivalent.
The affine isomorphism $\alpha\colon P(G,v)\to P(G,w)$
extends to an isomorphism of affine hulls.
If both polytopes are centered at the origin, then
this isomorphism is linear and an isomorphism of $\reals G$-modules.
In the other cases, we can first translate the polytopes
into polytopes centered at the origin. 
It follows that the orbit polytopes $P(G,v)$ and $P(G,w)$ are affinely
$G$\nbd equivalent if and only if the modules $\reals Gv$ and $\reals Gw$
are isomorphic, up to a trivial module summand.

It is a consequence of the general theory of semisimple rings 
that a cyclic module is isomorphic to a left ideal
in the group algebra, and that this left ideal 
is generated by an idempotent $f$ (that is, $f^2=f$).
Thus every orbit polytope is affinely $G$\nbd equivalent to 
an orbit polytope $P(G,f)$ contained in the group algebra.
We now reprove this,
giving a concrete formula for $f$.

\begin{thm}\label{t:concretesplitting}
  Suppose $G$ acts linearly on $V=\reals^d$
  and $v\in \reals^d$ is such that $V= \reals G v$.
  Set
  \[ Q := \sum_{g\in G} (gv)(gv)^t \in \reals^{d\times d}
     \quad \text{and} \quad
      f:=   
        \sum_{g\in G} \big((gv)^t Q^{-1} v\big) \cdot g
        \in \reals G.
  \]
  Then  $P(G,v)$ is affinely $G$\nbd equivalent to
  $P(G,f)$,
  and $f$ is an idempotent
  with $fv=v$, $\reals Gf\iso V$ and
  $\skp{1-f,f}=0$.
\end{thm}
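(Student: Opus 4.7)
My plan is to realize $f$ as the preimage of $v$ under a canonical $\reals G$-module splitting, and then read off the stated properties from that identification. The key player is the evaluation map $\phi \colon \reals G \to V$, $a \mapsto av$, which is a surjective left $\reals G$-module homomorphism by the hypothesis $V = \reals G v$. Applying the Maschke decomposition of $\reals G$ recalled just before the theorem (the orthogonal complement of a left ideal with respect to the canonical inner product is again a left ideal), the subspace $M := (\ker \phi)^\perp$ is a left ideal of $\reals G$, and the restriction $\phi|_M \colon M \to V$ is an $\reals G$-linear isomorphism (injective since $M \cap \ker \phi = 0$, surjective since $\phi$ already is).

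Next I would isolate $f$ as the unique element of $M$ with $\phi(f) = v$ and compute its coefficients. Writing $f = \sum_g c_g g$, the equation $\phi(f) = v$ becomes $\sum_g c_g (gv) = v$, while the condition $f \in (\ker\phi)^\perp$ translates, in terms of coefficient vectors, to $c = (c_g)_g$ being orthogonal to the kernel of the map $c \mapsto \sum_g c_g (gv)$; equivalently, $c$ lies in the row span of the matrix whose columns are the orbit vectors $gv$. Parameterising $c_g = (gv)^t u$ for some $u \in \reals^d$ and substituting gives $v = Qu$, so $u = Q^{-1} v$ and $c_g = (gv)^t Q^{-1} v$, which matches the formula in the statement. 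This is the one step that actually produces the explicit expression.

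The remaining assertions then follow formally. For idempotency, $\phi(f^2) = f\phi(f) = fv = v = \phi(f)$ forces $f^2 - f \in \ker \phi$, while $f^2, f \in M$ by closure of the left ideal $M$; since $M \cap \ker \phi = 0$, this gives $f^2 = f$. The identity $\phi(1 - f) = v - v = 0$ places $1 - f$ in $\ker \phi$, and then $\skp{1-f, f} = 0$ follows from $f \in M = (\ker \phi)^\perp$. Applying $\phi$ to $\reals G f$ shows that $\reals G f$ surjects onto $\reals G v = V$, and combined with injectivity of $\phi|_M$ together with $\reals G f \subseteq M$ this forces $\reals G f = M$, so $\reals G f \cong V$ as $\reals G$-modules. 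Finally, $\phi|_{\reals G f}$ is an $\reals G$-equivariant linear bijection sending $gf \mapsto gv$ for all $g$, hence restricts to an affine $G$-equivalence $P(G, f) \to P(G, v)$ in the sense of Definition~\ref{defi:affgequiv}.

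There is no real obstacle here: the one substantive step is recognising the expression $(gv)^t Q^{-1} v$ as the coefficient of $g$ in the orthogonal projection onto $(\ker \phi)^\perp$ of any lift of $v$ to $\reals G$. Once $\phi$ and its Maschke splitting are in place, every claim in the theorem reduces to routine manipulation with the decomposition $\reals G = \ker \phi \oplus M$.
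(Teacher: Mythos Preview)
Your proof is correct. The underlying content is the same as the paper's, but the organization is inverted: the paper defines the map $\mu\colon V\to\reals G$, $x\mapsto \sum_g ((gv)^tQ^{-1}x)\,g$ explicitly and then verifies by direct computation that it is $\reals G$-linear, that $\mu(x)v=x$, and that its image is orthogonal to $\ker(a\mapsto av)$, deducing all the claims from these identities. You instead begin with the abstract Maschke decomposition $\reals G=\ker\phi\oplus(\ker\phi)^\perp$, characterize $f$ as the unique preimage of $v$ in the complement, and then solve for the coefficients, obtaining the formula as output rather than input. Your route explains \emph{why} the coefficients take the form $(gv)^tQ^{-1}v$ (row space of the orbit matrix), and the idempotency and orthogonality claims become immediate from the decomposition; the paper's route is more self-contained computationally and yields the stronger statement $\skp{a,\mu(x)}=0$ for all $a$ with $av=0$ and all $x\in V$ along the way. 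Either approach is fine here.
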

The last equation means that 
$\reals G = \reals G f \oplus \reals G (1-f)$
is an \emph{orthogonal} direct sum.
Notice that $Q$ is defined as in Section~\ref{sec:linisocrit}.
\begin{proof}[Proof of Theorem~\ref{t:concretesplitting}]
  Define a map $\mu\colon V\to \reals G$ by
  \[ \mu(x) 
       = \sum_{g\in G} \big((gv)^t Q^{-1} x\big) \cdot g
     , \quad x\in V .
  \]
  Notice that $f=\mu(v)$.
  First we show that $\mu$ is a homomorphism of
  $\reals G$-modules:
  For $h\in G$ and $x\in V$, we have
  \begin{align*}
   \mu(hx) &= \sum_{g\in G} \big( (gv)^t Q^{-1} hx \big) g
          = \sum_{g\in G} \big( (gv)^t (h^{-1})^t Q^{-1} x \big) g
           \\
           &= \sum_{g\in G} \big( (h^{-1}gv)^t Q^{-1} x \big) g
            = \sum_{\tilde{g}\in G} \big( (\tilde{g}v)^t Q^{-1} x \big) 
              h \tilde{g}
            = h\mu(x). 
  \end{align*}
  (The second equality uses a property established before
   Corollary~\ref{c:affsymchar}.)
  
  Next we show that 
  $\mu(x) v = x$ for all $x\in V$:
  \begin{align*}
      \mu(x)v 
         &= \sum_{g\in G} \big((gv)^t Q^{-1} x \big)  gv
      \\ &= \sum_{g\in G} (gv) \cdot (gv)^t Q^{-1} x
          = QQ^{-1}x 
          = x.
  \end{align*}
  In particular, $fv= \mu(v)v = v$, and
  $f^2 = f\mu(v)=\mu(fv)=\mu(v)=f$.
  
  Moreover, it follows that $\mu$ is injective, 
  and is an isomorphism from
  $V$ onto 
  \[\mu(V) = \mu(\reals G v) = \reals G \mu(v)
                            = \reals G f.
  \]
  The restriction of $\mu$ to $P(G,v)$ is an affine $G$\nbd equivalence
  from $P(G,v)$ onto $P(G,f)$.
  
  Finally, for any $a=\sum_g a_g g\in \reals G$ such that 
  $av=0$ and $x\in V$, we have
  \begin{align*}
    \skp*{\sum_g a_g g,\mu(x)}
      &= \sum_g a_g \big((gv)^tQ^{-1}x \big) 
      \\
      &= \big(\sum_g a_g gv \big)^t Q^{-1}x 
       = 0^t Q^{-1}x=0. 
  \end{align*}
  With $a=1-f$ and $x=v$, we get
  $\skp{1-f,f}=0$ as claimed.
\end{proof}
The map $\mu$ of the last proof is a splitting 
of the left module homomorphism 
$\kappa\colon \reals G \to V$ defined by $\kappa(a) = av$,
since we have seen that $\mu(x)v=x$ for all $x\in V$.
Moreover,
  $a\mapsto (\mu(\kappa(a)) = af$ is the orthogonal projection
from $\reals G$ onto $\reals Gf$.

Let $\pi$ be a permutation of the group $G$.
We extend $\pi$ to a linear map $\reals G\to \reals G$,
which we still denote by $\pi$.
Corollary~\ref{c:affsymchar} yields 
that $\pi\in \LinSym(Gv)$ if and only if
$\pi(gf)= \pi(g)f$ for all $g\in G$.
This can easily be verified directly, using that
$\reals G$ is the orthogonal sum of
$\reals Gf$ and $\reals G(1-f)$.
A consequence is the following:
\begin{cor}\label{c:gale}
  Assume that $f^2=f$ and $\skp{1-f,f}=0$.
  Then\/ $\LinSym(Gf)=\LinSym(G(1-f))$.
\end{cor}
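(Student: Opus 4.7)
The plan is to exploit the characterization of linear symmetries in terms of the idempotent that was established in the paragraph immediately preceding this corollary. Namely, for an idempotent $f \in \reals G$ with $\skp{1-f,f}=0$, a permutation $\pi\in\Sym(G)$ lies in $\LinSym(Gf)$ if and only if the linear extension of $\pi$ to $\reals G \to \reals G$ satisfies $\pi(gf) = \pi(g)f$ for every $g\in G$. This is the only nontrivial ingredient we use, and the rest is a short linearity argument.

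The first step is to observe that the hypotheses pass symmetrically from $f$ to $1-f$: one has $(1-f)^2 = 1-2f+f^2 = 1-f$, and the orthogonality $\skp{f,1-f} = \skp{1-f,f} = 0$ is also provided. Hence the quoted characterization applies equally well to the idempotent $1-f$, yielding that $\pi\in\LinSym(G(1-f))$ if and only if $\pi(g(1-f)) = \pi(g)(1-f)$ for all $g\in G$.

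It remains to show that the two conditions $\pi(gf)=\pi(g)f$ and $\pi(g(1-f))=\pi(g)(1-f)$ are in fact equivalent. This follows purely by linearity: writing $g = gf + g(1-f)$ and applying $\pi$ gives $\pi(g) = \pi(gf) + \pi(g(1-f))$, while on the other hand $\pi(g) = \pi(g)f + \pi(g)(1-f)$ is automatic. Subtracting the two identities yields
\[ \pi(gf) - \pi(g)f = \pi(g)(1-f) - \pi(g(1-f)), \]
so one side of the equation vanishes for every $g$ if and only if the other does. Combining this with the two characterizations gives $\LinSym(Gf)=\LinSym(G(1-f))$.

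There is no real obstacle: the whole point is that the characterization preceding the corollary is symmetric in $f$ and $1-f$, and the equivalence of the two defining conditions is forced by linearity of the extended map $\pi\colon \reals G\to \reals G$. The only thing to take care of is to invoke the characterization cleanly for both complementary idempotents rather than re-deriving it from Corollary~\ref{c:affsymchar}.
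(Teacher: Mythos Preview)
Your proof is correct and follows essentially the same approach as the paper. The paper does not spell out a proof at all: it simply states the corollary as ``a consequence'' of the characterization $\pi\in\LinSym(Gf)\iff \pi(gf)=\pi(g)f$ established in the preceding paragraph, and then remarks that the result is ``nothing new'' since $(g(1-f))_{g}$ is the Gale dual of $(gf)_{g}$. Your argument makes explicit precisely the step the paper leaves to the reader, namely that the characterization is symmetric in $f$ and $1-f$ and that the two conditions are interchanged by the linear identity $\pi(g)=\pi(gf)+\pi(g(1-f))=\pi(g)f+\pi(g)(1-f)$.
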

Note that the vector configuration
$\{g(1-f) \mid g\in G\}$ is just the dual one to (the Gale diagram of)
$\{gf\mid g \in G\}$~\cite[Chapter~6]{ziegler95}.
Thus the last corollary is nothing new.
One should notice, however, that it is possible that
$f$ has nontrivial stabilizer $H>1$, while
the stabilizer of $(1-f)$ is trivial. 
Indeed, if $gf=f$ and $g(1-f)=1-f$, then
 $g= g\cdot 1 = gf+g(1-f) = f+(1-f)= 1$.
Thus the intersection of the two stabilizers is trivial.

If $H$ is the stabilizer of $f$, then every permutation of
$G$ which maps each left coset of $H$ to itself is 
in $\LinSym(Gf)=\LinSym(G(1-f))$.
Such a permutation induces the identity on 
$P(G,f)$,  but in general induces a non-identity
symmetry on $P(G,1-f)$.
For example, we may view a tetrahedron as an orbit polytope
of the symmetric group $S_4$, so that $S_3$ stabilizes a vertex.
The dual of this polytope has dimension $24-1-3=20$,
has $24$ vertices and affine symmetry group of order
$24 \cdot 6^4$.

In the rest of this section, we discuss some consequences of
the general structure theory of semisimple rings for 
orbit polytopes. 
(By Maschke's theorem, $\reals G$ is semisimple.)

There are only a finite number of 
non-isomorphic simple left $\reals G$\nbd modules,
say $S_1$, $\dotsc$, $S_r$~\cite[Ch.~XVII, \S~4]{langAlg}.
Every $\reals G$\nbd module $V$ of finite dimension over $\reals$
is isomorphic to a direct sum
$m_1S_1\oplus \dotsb \oplus m_rS_r$,
where the multiplicities $m_i\in \nats$ are uniquely determined
by the isomorphism type of $V$.
If $W\iso n_1S_1\oplus \dotsb \oplus n_rS_r$
is another left $\reals G$\nbd module, then
$V$ is isomorphic to a submodule of $W$ if and only
if $m_i\leq n_i$ for all $i$.

In particular, we can write
$\reals G \iso d_1 S_1 \oplus \dotsb \oplus d_r S_r$
with $d_i\in \nats$.
We have seen in Theorem~\ref{t:concretesplitting}
that if a module $V$ has the form $V=\reals Gv$,
then it is isomorphic to a submodule (that is, a left ideal)
of the regular module $\reals G$.
Conversely, each left ideal $L\leq \reals G$ is generated by an
idempotent $f$.
(Choose a complement $A$ of $L$ and a decomposition
 $1= f+e$ with $f\in L$ and $e\in A$).
 
Thus $V = m_1 S_1 \oplus \dotsb \oplus m_r S_r$ is cyclic as $\reals G$-module 
if and only if $m_i\leq d_i$ for all $i$.
In particular, there are only finitely many isomorphism classes
of cyclic $\reals G$\nbd modules,
and every possible orbit polytope of $G$ under 
\emph{some} representation is contained in one of these cyclic modules,
up to affine $G$\nbd equivalence.

By Lemma~\ref{l:baryc_orbitpolytope}
we may assume that an orbit polytope is centered at the origin.
This means that the corresponding cyclic
$\reals G$\nbd module does not contain the trivial module as
constituent.
Conversely, if an orbit polytope
$P(G,v)$ is full-dimensional in $V$, which means that 
$V = \aff(P(G,v))$, then $P(G,v)$ is centered at the origin 
by Lemma~\ref{l:baryc_orbitpolytope}, and the trivial module
is not a constituent of $V$. 
Thus we have proved the following result:
\begin{thm}\label{t:mod_fdpolytop}
  Let $S_1=\reals$ (the trivial module), 
  $S_2$, $\dotsc$, $S_r$ be a set of representatives
  of the different isomorphism classes of simple left 
  $\reals G$\nbd modules, and let $V$ be an arbitrary left 
  $\reals G$\nbd module.
  Write 
  \begin{align*}
    V &\iso m_1 S_1\oplus \dotsb \oplus m_rS_r
    \quad\text{and}\quad 
    \reals G \iso d_1 S_1 \oplus \dotsb \oplus d_rS_r.
  \end{align*}
  Then $V$ contains full-dimensional orbit polytopes
  $P(G,v)$ if and only if $m_1=0$ and
  $m_i\leq d_i$ for all $i$.
\end{thm}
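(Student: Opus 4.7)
The plan is to prove both directions by combining the geometric characterization given by Lemma~\ref{l:baryc_orbitpolytope} with the standard structure theory of the semisimple group algebra $\reals G$. The key observation is that ``$P(G,v)$ is full-dimensional in $V$'' translates, via that lemma, into the purely algebraic statement ``$V = \reals G v$ and $\Fix G = 0$ in $V$'', i.e.\ $V$ is a cyclic $\reals G$-module without trivial summand. Once in algebraic form, the question reduces to embedding $V$ into the regular module.

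For the forward direction, I would assume $V = \aff\{gv \mid g \in G\}$ for some $v \in V$. Since $V$ is a vector space we have $0 \in V$, so condition~\ref{it:zeroinaffhull} of Lemma~\ref{l:baryc_orbitpolytope} applies, giving $0 \in \aff(Gv)$. Hence the affine hull equals the linear span, so $V = \erz{gv \mid g \in G} = \reals G v$. Condition~\ref{it:fixedspace} then yields $\Fix G \cap \reals G v = 0$, but $\reals G v = V$, so $\Fix G = 0$ in $V$; equivalently $m_1 = 0$. The module $V$ is cyclic, hence a quotient of $\reals G$, and by Maschke's theorem (reviewed in this section) $V$ is isomorphic to a submodule of $\reals G$. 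Uniqueness of multiplicities in the decomposition into simples then forces $m_i \leq d_i$ for all~$i$.

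For the backward direction, I would suppose $m_1 = 0$ and $m_i \leq d_i$ for all $i$. The multiplicity inequalities give an $\reals G$-embedding $V \hookrightarrow \reals G$, and by semisimplicity the image is a left ideal of the form $\reals G f$ for an idempotent $f$ (choose a module complement and split $1 = f + e$). Let $v \in V$ be the preimage of $f$; then $V \cong \reals G f$ implies $V = \reals G v$, so $\aff(Gv) \subseteq V$ and equals the linear span $\reals G v = V$ as soon as $0 \in \aff(Gv)$. But $m_1 = 0$ means $\Fix G = 0$ in $V$, so condition~\ref{it:fixedspace} of Lemma~\ref{l:baryc_orbitpolytope} holds trivially, yielding $0 \in \aff(Gv)$ and hence $\aff(Gv) = V$, as required.

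I do not foresee a serious obstacle: both directions are essentially bookkeeping once one recognizes that Lemma~\ref{l:baryc_orbitpolytope} is exactly the bridge between the geometric notion of full-dimensionality and the algebraic notion of a cyclic module with no trivial summand. The only place where a small care is needed is the backward direction, where one must explicitly produce a generator $v$ of $V$ (and not merely assert $V$ is cyclic) before invoking the lemma.
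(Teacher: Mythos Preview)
Your proposal is correct and follows essentially the same route as the paper: both directions hinge on Lemma~\ref{l:baryc_orbitpolytope} to translate ``full-dimensional'' into ``$V=\reals G v$ with no trivial summand'', and then on semisimplicity to identify cyclic modules with submodules of $\reals G$. The only cosmetic difference is that the paper invokes Theorem~\ref{t:concretesplitting} to embed a cyclic module into $\reals G$, whereas you use the equivalent ``cyclic $\Rightarrow$ quotient of $\reals G$ $\Rightarrow$ submodule by Maschke'' argument directly.
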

We should mention that in practice, one can determine the 
multiplicities $m_i$ by just looking at the character of the
module $V$, using the orthogonality relations of character theory.
Of course, we have to know the characters of the modules $S_i$,
which can be derived from the irreducible complex characters. 
(See any reference on character theory of finite groups,
 for example~\cite{isaCTdov}, \cite{simonreps}.)

A possible application of Theorem~\ref{t:mod_fdpolytop}
is as follows: 
Suppose we are given a finite group $G$, and we want to know 
whether there is an orbit polytope $P(G,v)$
such that $\AGL(P(G,v)) \iso  G$.
Then there are only finitely many representations of $G$
we have to check, namely the subrepresentations 
of the regular representation.
Using Corollary~\ref{c:gale}, we only have to check
half of these representations.

We discuss one further topic in this section.
Let $V$ be a module not necessarily containing 
full-dimensional orbit polytopes.
In Lemma~\ref{l:fulldim} we showed that 
for ``almost all'' vectors $v\in V$,
the subspace $\erz{gv\mid g\in G}=\reals Gv$
has the maximal possible dimension.
The general structure theory of semisimple rings yields also
that all cyclic submodules of maximal dimension are isomorphic:
\begin{prop}
  Let $V$ be a finite dimensional $\reals G$\nbd module
  and set 
  \[ m := \max\{ \dim_{\reals}(\reals G v) \mid v \in V \}.
  \]
  If $\dim_{\reals}(\reals G v_1)=\dim_{\reals}(\reals Gv_2)=m$,
        then $\reals Gv_1 \iso \reals Gv_2$ 
        as $\reals G$\nbd modules.
\end{prop}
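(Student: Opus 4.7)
The plan is to reduce the statement to a simple bookkeeping argument about isotypic multiplicities, using the decomposition $\reals G \iso d_1 S_1 \oplus \dotsb \oplus d_r S_r$ of the regular module and the decomposition $V \iso m_1 S_1 \oplus \dotsb \oplus m_r S_r$ of $V$ into simple constituents.

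First I would observe that for any $v \in V$, the cyclic submodule $\reals G v$ is isomorphic to a left ideal of $\reals G$. Indeed, $\reals G v$ is a quotient of $\reals G$ via $a \mapsto av$; since $\reals G$ is semisimple by Maschke's theorem, this surjection splits, so $\reals G v$ is isomorphic to a direct summand of $\reals G$. (Alternatively, one can invoke Theorem~\ref{t:concretesplitting}, which exhibits such an idempotent splitting explicitly.) Writing $\reals G v \iso n_1 S_1 \oplus \dotsb \oplus n_r S_r$, we therefore have $n_i \leq d_i$ for all $i$. At the same time $\reals G v$ is a submodule of $V$, so $n_i \leq m_i$. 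Hence
\[
 n_i \leq \min(m_i, d_i) \quad \text{for all } i,
 \qquad \text{and} \qquad
 \dim_\reals(\reals G v) = \sum_{i=1}^r n_i \dim_\reals S_i \leq \sum_{i=1}^r \min(m_i, d_i) \dim_\reals S_i.
\]

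Next I would argue that the maximum $m$ forces the multiplicities to be saturated. If $\reals G v_1$ and $\reals G v_2$ both have dimension $m$, write their isotypic decompositions as $\bigoplus_i n_i^{(j)} S_i$ for $j = 1, 2$. Since each summand in the bound above contributes a nonnegative amount with strictly positive coefficient $\dim_\reals S_i$, the dimension $\sum_i n_i^{(j)} \dim_\reals S_i$ attains its upper bound $\sum_i \min(m_i, d_i) \dim_\reals S_i$ only when $n_i^{(j)} = \min(m_i, d_i)$ for every $i$. Because both $v_1$ and $v_2$ realize the maximum $m$, this forces $m = \sum_i \min(m_i, d_i) \dim_\reals S_i$ and $n_i^{(1)} = n_i^{(2)} = \min(m_i, d_i)$ for all $i$. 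By the uniqueness of isotypic decompositions, $\reals G v_1 \iso \reals G v_2$.

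The argument is essentially routine once the structure theory is in place, and there is no real obstacle: the only point requiring care is to justify that $\reals G v$ embeds as a submodule of the regular module (so that the upper bound $n_i \leq d_i$ applies), which follows from semisimplicity. Everything else is just the observation that maximizing a nonnegative integer combination under coordinatewise upper bounds forces each coordinate to its bound.
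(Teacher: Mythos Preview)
Your approach matches the paper's: bound the isotypic multiplicities of a cyclic submodule by $e_i := \min(m_i,d_i)$, and then argue that a maximum-dimensional cyclic submodule must have exactly these multiplicities. There is, however, one logical step you skip. From the bound
\[
  \dim_{\reals}(\reals G v) \;\leq\; \sum_i e_i \dim_\reals S_i
\]
and the fact that $v_1,v_2$ realise the maximum $m$, you cannot yet conclude $m = \sum_i e_i \dim_\reals S_i$ (and hence that the $n_i^{(j)}$ are saturated). Your sentence ``Because both $v_1$ and $v_2$ realize the maximum $m$, this forces $m = \sum_i \min(m_i,d_i)\dim_\reals S_i$'' is a non sequitur: knowing that two vectors attain the \emph{maximum over cyclic submodules} does not tell you this maximum equals the \emph{a priori upper bound}, unless you know that upper bound is actually achieved by some $v$. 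Your final paragraph (``maximizing a nonnegative integer combination under coordinatewise upper bounds forces each coordinate to its bound'') conflates the integer optimisation problem over all feasible tuples $(n_1,\dotsc,n_r)$ with the optimisation over those tuples that arise from cyclic submodules of $V$.

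The paper closes exactly this gap: it observes that since $e_i\leq m_i$, the module $W \iso \bigoplus_i e_i S_i$ embeds in $V$, and since $e_i\leq d_i$, this $W$ is cyclic, i.e.\ $W=\reals G v$ for some $v\in V$. Hence the bound is sharp, $m=\sum_i e_i\dim_\reals S_i$, and the rest of your argument goes through. This is an easy fix using the same structure theory you already invoked, so the missing step is minor; but as written your proof is incomplete at that point.
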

\begin{proof}
Let $m_i$ and $d_i$ be as before
and set $e_i :=\min\{m_i,d_i\}$.
The multiplicity of $S_i$ in any cyclic submodule
$\reals Gv \leq V$ is bounded above by $e_i$.
Thus the dimension of such a submodule over $\reals$ is bounded above by
$e_1\dim_{\reals}S_1 + \dotsb + e_r\dim_{\reals}S_r$.

Since $e_i\leq m_i$, the module $V$ has a submodule
$W\iso e_1 S_1\oplus \dotsb \oplus e_rS_r$, which
is also isomorphic to a submodule of $\reals G$.
Then there is $v\in W\leq V$ such that 
$W=\reals Gv= \erz{gv\mid g\in G}$.
This shows that
\[e_1\dim_{\reals}S_1+\dotsb + e_r\dim_{\reals}S_r
   = m,
\]
and if $m= \dim_{\reals}(\reals Gv)$, then
$\reals Gv\iso e_1S_1 \oplus \dotsb \oplus e_rS_r$.
\end{proof}
As a consequence, we can define generic points in arbitrary 
$\reals G$\nbd modules as points generating a submodule
of the maximal possible dimension, and being generic in this
submodule.
Then all generic orbit polytopes have essentially 
``the same'' affine symmetry group.

\section{Representation polytopes as subsets of the group algebra}
\label{sec:reppts_groupalg}
In this section we characterize representation polytopes
among orbit polytopes, and we show how to compute
their affine symmetries from a certain character
(Theorem~\ref{introt:repsyms} from the introduction).
\begin{thm}
 \label{t:isreppolytop}
 Let $f\in \reals G$ be an idempotent.
 Then  $f\in \Z(\reals G)$ if and only if
 $P(G,f)$ is affinely $G$\nbd equivalent 
 to a representation polytope $P(D)$
            (where $D$ is a representation of the 
             same group $G$). 
  Moreover, we can choose $f$ such that 
  $\ker D = \reals G(1-f)$.
\end{thm}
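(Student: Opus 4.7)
My plan is to treat the two directions separately.

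For the direction $f\in \Z(\reals G)\Rightarrow P(G,f)\sim P(D)$: when $f$ is central, the left ideal $\reals Gf$ is two-sided, hence a subring with two-sided identity $f$. I would define $D\colon G\to \GL(\reals Gf)$ by $D(g)a=ga$ and consider the map $\mu\colon \reals Gf\to \enmo(\reals Gf)$ sending $a$ to left multiplication by $a$. A direct check shows that $\mu$ is an injective (since $\mu(a)f=af=a$) $G$-equivariant $\reals$-linear map with $\mu(f)=\id$ and $\mu(gf)=D(g)$. Restricting $\mu$ gives the desired affine $G$\nbd equivalence $P(G,f)\to P(D)$. Extending $D$ to an algebra map $\reals G\to \enmo(\reals Gf)$ and using the orthogonal decomposition $\reals G=\reals Gf\oplus \reals G(1-f)$ (valid since $f$ is central, so both summands are two-sided ideals), the kernel is $\{a\in \reals G:af=0\}=\reals G(1-f)$, establishing the ``moreover'' clause.

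For the converse, suppose $P(G,f)$ is affinely $G$\nbd equivalent to a representation polytope $P(D)=P(G,I)$ sitting inside $\enmo(V)$. The module-theoretic correspondence from Section~\ref{sec:groupalg} yields $\reals Gf\iso \reals G\cdot I$ as left $\reals G$-modules (up to trivial summands, which can be absorbed by translating to a common barycenter). The right-hand side is the image of the extended algebra map $\reals G\to \enmo(V)$, so by the first isomorphism theorem it is $\reals G/\ker D$ as a left module. By Maschke's semisimplicity of $\reals G$, the two-sided ideal $\ker D$ has a two-sided complement generated by a central idempotent $e$ with $\ker D=\reals G(1-e)$; hence $\reals G/\ker D\iso \reals Ge$ and therefore $\reals Gf\iso \reals Ge$ as left $\reals G$\nbd modules.

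To conclude $f=e$ (so in particular $f\in\Z(\reals G)$), I would invoke the Wedderburn decomposition $\reals G=\bigoplus_i B_i$ with $B_i\iso \mat_{n_i}(\Delta_i)$ simple and central primitive idempotents $\eps_i$. Writing $f=\sum_i f_i$ and $e=\sum_i e_i$ with $f_i=f\eps_i\in B_i$ and $e_i\in\{0,\eps_i\}$, the isomorphism $\reals Gf\iso \reals Ge$ splits into $B_if_i\iso B_ie_i$ for every $i$. When $e_i=0$ this forces $f_i=0$; when $e_i=\eps_i$, an isomorphism $B_if_i\iso B_i$ of left $B_i$-modules forces $f_i$ to be an idempotent of full rank in $\mat_{n_i}(\Delta_i)$, which must be $\eps_i$. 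Thus $f_i=e_i$ for every $i$, so $f=e\in\Z(\reals G)$, and the kernel condition $\ker D=\reals G(1-f)$ is then automatic. The main obstacle is this last step: passing from ``$f$ is module-isomorphic to a central idempotent'' to ``$f$ itself is central'' genuinely seems to require the full Wedderburn structure theory, since there is no obvious direct algebraic manipulation forcing $f=e$ from a bare module isomorphism.
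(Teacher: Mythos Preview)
Your forward direction is essentially the paper's: when $f$ is central, take $D$ to be left multiplication on the two-sided ideal $\reals Gf$, and the map $a\mapsto(\text{left multiplication by }a)$ is the required affine $G$-equivalence, with $\ker D=\reals G(1-f)$.

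Your converse, however, differs genuinely from the paper's. You pass through module isomorphism: from the affine $G$-equivalence you extract $\reals Gf\iso \reals Ge$ (up to the trivial block) for the central idempotent $e$ with $\ker D=\reals G(1-e)$, and then use the Wedderburn decomposition blockwise. The key step---that an idempotent $f_i\in B_i$ with $B_if_i\iso B_i$ as left $B_i$-modules must equal the identity $\eps_i$---is correct: $B_if_i=B_i$ gives $\eps_i=xf_i$ for some $x$, whence $f_i=\eps_i f_i=xf_if_i=xf_i=\eps_i$. (One small inaccuracy: you conclude $f=e$, but strictly you only get agreement outside the trivial block; since that block is one-dimensional and central anyway, $f\in\Z(\reals G)$ still follows.)

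The paper's converse is more direct and avoids the full Wedderburn matrix structure. After arranging the affine isomorphism $\alpha\colon\reals Gf\to\mat_n(\reals)$, $\alpha(gf)=D(g)$, to be linear (by the same trivial-summand adjustment you gesture at), the paper observes that $\alpha(af)=D(a)$ for all $a\in\reals G$. Writing $e=1-f$, one gets $D(e)=\alpha(ef)=\alpha(0)=0$, so for any $a$, $\alpha(eaf)=D(ea)=D(e)D(a)=0$; injectivity of $\alpha$ forces $eaf=0$. Thus $eA\subseteq Ae$, so $Ae$ is two-sided, and Lemma~\ref{l:centidemp} (an idempotent generating a two-sided ideal in a semisimple ring is central) finishes. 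This route uses only the light Lemma~\ref{l:centidemp} rather than the explicit matrix-ring decomposition, and it bypasses entirely what you identified as the ``main obstacle'': the paper never needs to deduce centrality of $f$ from a bare module isomorphism, because it works with the concrete map $\alpha$ throughout. Your approach, in exchange, makes the module-theoretic content of the statement more transparent.
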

We need the following simple property of semisimple rings:
\begin{lemma}\label{l:centidemp}
  Let $e$ be an idempotent in a semisimple ring $A$.
  Then $Ae$ is an ideal (i.\,e. two-sided) if and only if $e\in \Z(A)$.
\end{lemma}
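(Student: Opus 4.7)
The plan is to prove the two directions separately. The forward direction $e\in\Z(A)\Rightarrow Ae$ two-sided is immediate: centrality gives $(ae)b=a(eb)=a(be)=(ab)e\in Ae$ for every $ae\in Ae$ and $b\in A$, so $Ae$ is also a right ideal.

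For the converse, my plan is to produce a central idempotent $e'\in A$ with $Ae=Ae'$, and then to identify $e$ with $e'$. Assuming $Ae$ is a two-sided ideal, I would invoke semisimplicity of $A$ (applied to $A$ as an $A$\nbd bimodule, or equivalently the Wedderburn decomposition) to obtain a two-sided ideal $I$ with $A=Ae\oplus I$. Writing $1=e'+f$ with $e'\in Ae$ and $f\in I$, the inclusions
\[
Ae\cdot I\subseteq Ae\cap I=\{0\}\quad\text{and}\quad I\cdot Ae\subseteq Ae\cap I=\{0\}
\]
will imply that $A$ splits as a ring direct product $Ae\times I$; hence the projection $A\to Ae$ along this decomposition is a ring homomorphism sending $1$ to $e'$, so $e'$ is the identity of the factor $Ae$ and in particular a central idempotent of $A$.

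To identify $e$ with $e'$, I would use $e\in Ae$ together with $Ae\cdot I=0$ to conclude $ef=0$, so that $e=e(e'+f)=ee'$. Conversely, writing $e'=ae$ for some $a\in A$ (possible since $e'\in Ae$), one obtains $e'e=ae^2=ae=e'$. Centrality of $e'$ then forces $ee'=e'e$, yielding $e=e'\in\Z(A)$, as required.

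The only nontrivial ingredient is the existence of a two-sided ideal complement to $Ae$; once that structural fact about semisimple rings is in hand, the rest is a short idempotent manipulation, and I do not anticipate any serious obstacle.
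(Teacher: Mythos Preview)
Your proof is correct. Both directions are handled cleanly, and the idempotent manipulation at the end is airtight: from $e\in Ae$ and $Ae\cdot I=0$ you get $e=ee'$, from $e'\in Ae$ you get $e'e=e'$, and centrality of $e'$ closes the loop.

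Your route differs from the paper's. The paper invokes the full Wedderburn--Artin decomposition $A\cong A_1\times\dotsm\times A_k$ into simple rings, projects $e$ to each factor, and uses simplicity to force each component $e_i$ to be $0$ or $1_{A_i}$; centrality of $e$ then falls out componentwise. You instead use only the consequence that a two-sided ideal in a semisimple ring has a two-sided ideal complement, and then argue directly with idempotents. The paper's argument is shorter and more concrete (it explicitly identifies $e$ as a sum of certain unit elements of the simple factors), while yours is slightly more abstract and would go through verbatim in any ring where two-sided ideals split off as ring direct factors. In practice the structural input is the same, so the difference is one of presentation rather than of depth.
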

\begin{proof}
  By Wedderburn-Artin structure theory, 
  $A \iso A_1 \times \dotsm \times A_k$ is a direct product
  of simple rings $A_i$.
  Let $e_i$ be the projection of $e$ to $A_i$.
  If $Ae$ is a two-sided ideal of $A$, 
  then $A_ie_i$ is a two-sided ideal of $A_i$.
  Thus either $A_ie_i =\{0\}$  and $e_i=0$, 
  or $A_i e_i = A_i$,
   which yields $e_i=1_{A_i}$,
  since $e_i$ is invertible and an idempotent.
  In any case, $e_i\in \Z(A_i)$ and so $e\in \Z(A)$. 
  The converse is trivial.
\end{proof}
\begin{proof}[Proof of Theorem~\ref{t:isreppolytop}]
If $f\in \Z(\reals G)$,
then $\reals G(1-f)$ is an ideal of $\reals G$
and there is a representation
$D$ such that $D$ as algebra homomorphism
$\reals G\to \mat_n(\reals)$ has  kernel $\reals G(1-f)$.
(For example, we can take the representation corresponding
to the action of $G$ on $\reals G f$.)
Then $D$ yields an affine isomorphism of $G$\nbd sets
from $P(G,f)$ onto $P(D)$.

Conversely, assume that $D$ is a representation and
$\alpha\colon P(G,f) \to P(D)$ is an affine isomorphism
such that $\alpha(gf) = D(g)$ for all $g\in G$.
First we show that we can assume that $\alpha$ is the
restriction of an injective 
linear map $\reals G f \to \mat_n(\reals)$.
Let
 $e_1 = (1/\abs{G})\sum_g g \in \Z(\reals G)$.
The barycenter of $P(G,f)$ is the idempotent $e_1f$,
which is either $e_1$ or $0$,
and the barycenter of $P(D)$ is $D(e_1)$.
We are done if both centers are zero, or both are non-zero.
 If $e_1f=0$, but $D(e_1)\neq 0$,
then we can replace $f$ by $f+e_1$, 
 since $P(G,f)$ and $P(G,f+e_1)$ are affinely equivalent,
 and assume that $e_1f = e_1\neq 0$.
If $e_1f \neq 0$, but $D(e_1)=0$,
then we replace $f$ by $f-e_1$.

So assume that $\alpha\colon \reals Gf \to \mat_n(\reals)$ 
is linear and injective, and sends $gf$ to $D(g)$. Then
\[ \alpha\left(\sum_{g\in G} a_g g f\right) 
     = \sum_{g\in G} a_g \alpha(gf)
     = \sum_{g\in G} a_g D(g) 
     = D\left(\sum_{g\in G} a_g g\right).
\]
For the rest of this proof, write $e=(1-f)$ and $A=\reals G$.
We have $D(f)= \alpha(f\cdot f) =\alpha(f) = I$
and $D(e) = \alpha(ef) = \alpha(0) = 0$.
For $a\in A$, we have
$\alpha(eaf) = D(ea)=0$ and thus
$eaf=0$ since $\alpha$ is injective. 
It follows that $eA\subseteq Ae$ and thus 
$Ae$ is a two-sided ideal of $A$. 
The lemma yields $e\in \Z(A)$ and thus $f=1-e\in \Z(A)$.
\end{proof}
\begin{remark}
  Let $V= \reals Gf$.
  In the notation of Theorem~\ref{t:mod_fdpolytop},
  $f\in \Z(\reals G)$ if and only if
  each multiplicity $m_i$ of the simple module $S_i$ in $V$ 
  is either $0$ or $d_i$ (the multiplicity of $S_i$ in $\reals G$).
\end{remark}
\begin{cor}\label{c:realidealgroups}
  Let $G$ be a finite group. The following are equivalent:
  \begin{enumthm}
  \item Every orbit polytope for $G$ is affinely
        $G$\nbd equivalent to a representation polytope.
  \item The group algebra $\reals G$ is a direct product of division rings.
  \item $G$ is  an abelian group or a direct product
        of the quaternion group of order $8$ with an elementary
        abelian $2$-group.
  \end{enumthm}
\end{cor}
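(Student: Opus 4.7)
The plan is to prove the three equivalences in two pieces: (i)$\Leftrightarrow$(ii) follows from the ring-theoretic results just developed, while (ii)$\Leftrightarrow$(iii) is a classical statement about real group algebras.

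For (i)$\Leftrightarrow$(ii), I invoke two results from this section. By Theorem~\ref{t:concretesplitting}, every orbit polytope of $G$ is affinely $G$\nbd equivalent to one of the form $P(G,f)$ with $f \in \reals G$ an idempotent; by Theorem~\ref{t:isreppolytop}, $P(G,f)$ is in turn affinely $G$\nbd equivalent to a representation polytope if and only if $f \in \Z(\reals G)$. Hence (i) is equivalent to requiring every idempotent of $\reals G$ to be central. Writing the Wedderburn decomposition $\reals G \iso \prod_i \mat_{n_i}(D_i)$ with $D_i$ real division rings, and observing that the matrix unit $E_{11} \in \mat_n(D)$ is a non-central idempotent whenever $n \geq 2$, this happens iff $n_i = 1$ for every $i$, which is condition (ii).

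For (iii)$\Rightarrow$(ii): if $G$ is abelian, $\reals G$ is commutative and semisimple, hence a product of fields each isomorphic to $\reals$ or $\compl$. If $G = Q \times E$ with $Q = Q_8$ the quaternion group and $E$ elementary abelian of exponent $2$, the character table of $Q_8$ yields $\reals Q \iso \reals^4 \times \mathbb{H}$ (four linear characters together with the unique $2$\nbd dimensional irreducible, which is quaternionic), and $\reals E \iso \reals^{|E|}$, so that
\[
 \reals G \iso \reals Q \otimes_\reals \reals E \iso \reals^{4|E|} \times \mathbb{H}^{|E|}
\]
is a product of division rings.

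For (ii)$\Rightarrow$(iii), by Frobenius' theorem each $D_i$ is isomorphic to one of $\reals$, $\compl$, or $\mathbb{H}$. The simple components of $\reals G$ correspond to orbits of $\Irr G$ under $\operatorname{Gal}(\compl/\reals)$, and the component attached to $\chi$ has the form $\mat_{\chi(1)/m(\chi)}(D_\chi)$, with $m(\chi) \in \{1,2\}$ the real Schur index of $\chi$. Condition (ii) is therefore equivalent to the statement that every $\chi \in \Irr G$ satisfies $\chi(1) = m(\chi)$, i.e., either $\chi(1) = 1$, or $\chi(1) = 2$ with $\chi$ quaternionic. The main obstacle is the ensuing group-theoretic classification: these character-theoretic constraints must force $G$ to be abelian or isomorphic to $Q_8 \times E$. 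I would argue that a faithful degree-$2$ quaternionic irreducible realizes a quotient of $G$ as a finite subgroup of $\mathbb{H}^\times$, and that inspecting the classical list of finite subgroups of $\mathbb{H}^\times$ (cyclic, binary dihedral, binary polyhedral) under the same degree/Schur-index constraint isolates $Q_8$. A central-extension argument, using that $G'$ must be central of order at most $2$ and $G/\Z(G)$ elementary abelian of exponent $2$, then yields $G \iso Q_8 \times E$ with $E$ elementary abelian of exponent $2$.
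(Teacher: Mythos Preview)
Your argument for (i)$\Leftrightarrow$(ii) and (iii)$\Rightarrow$(ii) matches the paper's. The difference is in (ii)$\Rightarrow$(iii).

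The paper uses a much shorter device: for any subgroup $H \leq G$, the element $e_H = \frac{1}{|H|}\sum_{h \in H} h$ is an idempotent in $\reals G$. If all idempotents are central, then $g^{-1}e_H g = e_H$ for every $g$, which forces $g^{-1}Hg = H$. Hence every subgroup of $G$ is normal, and Dedekind's classical classification of such groups gives $G$ abelian or $G \cong Q_8 \times E \times A$ with $E$ an elementary abelian $2$\nbd group and $A$ abelian of odd order. The case $A \neq 1$ is then excluded by observing that $\reals A$ contains a copy of $\compl$, so $\reals G$ contains $\mathbb{H} \otimes_\reals \compl \cong \mat_2(\compl)$, contradicting (ii).

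Your character-theoretic route is a genuine alternative, and the translation of (ii) into ``every $\chi \in \Irr G$ has $\chi(1) = m(\chi)$'' is correct. But the classification step you sketch is substantially harder than you indicate. Knowing that each quotient $G/\ker\chi$ for non-linear $\chi$ is a finite subgroup of $\mathbb{H}^\times$---and then, after ruling out the binary dihedral and polyhedral groups via their higher-degree or non-quaternionic irreducibles, that this quotient is $Q_8$---controls the quotients of $G$ but not $G$ itself. You would still need to argue that $G$ is a $2$\nbd group (the odd part must vanish, essentially by the same $\mathbb{H}\otimes_\reals\compl$ obstruction the paper uses), that $\lvert G'\rvert = 2$ rather than merely $G' \leq \Z(G)$ elementary abelian (which is all your central-extension remark yields), and finally that among the two non-abelian groups of order $8$ only $Q_8$ survives. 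All of this can be done, but it is considerably more work than the paper's one-line reduction to Dedekind's theorem.
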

\begin{proof}
  A semisimple ring in general is (by Wedderburn-Artin) a direct product 
  of matrix rings over division rings,
  and is thus a direct product of division rings 
  if and only if all idempotents are central.
  Thus Theorem~\ref{t:isreppolytop}, together with
  Theorem~\ref{t:concretesplitting}, yields the first equivalence.
  
  If $G$ is abelian, all idempotents are central, 
  since $\reals G$ is commutative.
  If $G$ is a direct product of the quaternion group of order $8$
  and an elementary abelian $2$\nbd group, 
  then $\reals G$ is a direct product of copies
  of $\reals$ and Hamilton's division ring of quaternions.

  Conversely, let $G$ be a group such that 
  all idempotents of the group algebra $\reals G$
  are contained in the center $\Z(\reals G)$.
  Let $H\leq G$ be a subgroup.
  Then $e_{H}:= (1/\abs{H}) \sum_{h\in H} h$
  is an idempotent in $\reals G$.
  Thus $g^{-1}e_H g = e_H$ for all $g\in G$,
  so $H$ is a normal subgroup.
  It follows that every subgroup of $G$ is normal.
  Such groups have been classified by 
  Dedekind~\cite[Satz III.7.12 on p.~308]{huppEG1}:
  Either $G$ is abelian or 
  $G$ is a direct product $Q_8 \times E \times A$,
  where $Q_8$ is the quaternion group with $8$ elements,
  $E$ is an elementary abelian $2$\nbd group
  and $A$ is abelian of odd order.
  But if $A > 1$ in the second case, 
  then $\reals A$ has a summand isomorphic to the complex numbers 
  $\compl$,
  and thus $\reals G \iso \reals[Q_8 \times E] \otimes_{\reals} \reals A$
  has a summand 
  $ \mathbb{H} \otimes_{\reals} \compl \iso \mat_2(\compl)$,
  where $\mathbb{H}$ is the division ring of the quaternions.
  Thus $A=1$.
\end{proof}

The central idempotents of the group algebra can be described 
using the irreducible characters.
We first recall the description of the central idempotents
in the complex group algebra $\compl G$.
As usual, we write $\Irr G$ for the set of 
complex irreducible characters of a group $G$.
To every $\chi\in \Irr G$ corresponds the central 
idempotent~\cite[Theorem~2.12]{isaCTdov}
\[ e_{\chi} = 
        \frac{\chi(1)}{\abs{G}} \sum_{g\in G} \chi(g^{-1})g.
\]
An arbitrary idempotent in $\Z(\compl G)$ is the sum of some of these.
Thus each idempotent $f$ in $\Z(\compl G)$ has the form
\[ f = \frac{1}{\abs{G}}\sum_{g\in G} \gamma(g^{-1})g
   \quad \text{with} \quad
   \gamma = \sum_{\chi\in S} \chi(1)\chi
   \quad\text{for some}\quad
   S\subseteq \Irr G.
\]
(This $\gamma$ is actually the character of the ideal
 $\compl G f$ as left $\compl G$-module.)
 
For $f\in \Z(\reals G)$, we get the same conclusion, with the additional
requirement that $\chi$ and its complex conjugate $\overline{\chi}$
are either both in $S$ or both not.

Given a representation $D$, we write
$\Irr D$ for the set of (complex) irreducible constituents 
of the character of $D$.
Then the kernel of $D$,
viewed as algebra homomorphism $\reals G\to \mat_d(\reals)$,
is $\reals G(1-f)$, where $f$ is the sum of those $e_{\chi}$
such that $\chi \in \Irr D$.
Now we can prove Theorem~\ref{introt:repsyms}
from the introduction, which we restate here
for the reader's convenience:
\begin{thm}\label{t:reppolyaffsyms}
  Let $D\colon G\to \mat_n(\reals)$ be a representation
  and set
  \[ \gamma= \sum_{\chi\in \Irr D} \chi(1)\chi.\]
  Then the permutation $\pi\colon G\to G$
  is in $\AGL(P(D))$ if and only if
  \[\gamma(\pi(g)^{-1}\pi(h))=\gamma(g^{-1}h)
     \quad \text{for all} \quad
     g,\, h\in G.
  \]
  (For example, this holds if $\pi$ is a
  group automorphism of $G$ fixing $\gamma$.)
\end{thm}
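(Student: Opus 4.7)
The plan is to translate everything into the group algebra, where the representation polytope becomes the orbit polytope of a central idempotent, and then recognize the function appearing in Corollary~\ref{c:affsymchar} as the character $\gamma$ up to a factor.

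First, I would invoke Theorem~\ref{t:isreppolytop} to replace $P(D)$ by an affinely $G$\nbd equivalent orbit polytope $P(G,f)$ inside $\reals G$, where $f$ is the unique central idempotent of $\reals G$ with $\ker D = \reals G(1-f)$. Using the standard description of central idempotents in group algebras via characters, this $f$ has the explicit form
\[ f = \frac{1}{\abs{G}}\sum_{g\in G}\gamma(g^{-1})\, g,
   \qquad
   \gamma = \sum_{\chi\in \Irr D}\chi(1)\chi.
\]
Since affine $G$\nbd equivalence preserves the affine symmetry group as a permutation group on $G$, it suffices to characterize $\AGL(P(G,f))$.

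Next I would apply Corollary~\ref{c:affsymchar} to the orbit polytope $P(G,f)$ regarded inside the module $V=\reals Gf$ (equipped with the canonical inner product on $\reals G$; note $V=\reals Gf$ because $f$ is idempotent and generates this cyclic module, so the corresponding $Q$ is invertible on $V$). The corollary says that $\pi\in \Sym(G)$ belongs to $\AGL(P(G,f))$ if and only if $\tilde f(g^{-1}h)=\tilde f(\pi(g)^{-1}\pi(h))$ for all $g,h\in G$, where $\tilde f(g) = f^t Q^{-1}(gf)$ and $Q=\sum_{h\in G}(hf)(hf)^t$.

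The heart of the argument, and the step I expect to require the most care, is identifying the function $\tilde f$ explicitly. I would apply Theorem~\ref{t:concretesplitting} to $V=\reals Gf$ with generating vector $v=f$. The idempotent produced by the theorem, namely $\mu(v)=\sum_{g}\bigl((gv)^t Q^{-1}v\bigr)g=\sum_g\tilde f(g^{-1})\,g$, must then equal the original idempotent $f$ itself, because the map $\kappa\colon \reals G\to V$, $a\mapsto af$, already splits via $\mu$ and $\mu(f)=f\cdot f=f$. Comparing coefficients of $g$ in the two expressions $f=\sum_g\tilde f(g^{-1})\,g=\tfrac{1}{\abs G}\sum_g\gamma(g^{-1})\,g$ yields $\tilde f=\gamma/\abs G$. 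Substituting this into the criterion from Corollary~\ref{c:affsymchar}, the factor $\abs G$ cancels and gives exactly the stated condition $\gamma(\pi(g)^{-1}\pi(h))=\gamma(g^{-1}h)$.

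For the parenthetical remark, if $\pi\in \Aut(G)$ satisfies $\gamma\circ\pi=\gamma$, then $\gamma(\pi(g)^{-1}\pi(h))=\gamma(\pi(g^{-1}h))=\gamma(g^{-1}h)$, so $\pi$ automatically induces an affine symmetry. The one subtlety I would double-check is the case where the trivial character occurs in $\Irr D$: then $P(D)$ is not centered at the origin, but this is harmless because the passage to $P(G,f)$ in Theorem~\ref{t:isreppolytop} is carried out precisely to handle both cases uniformly, and Corollary~\ref{c:affsymchar} is applied inside $\reals Gf$ rather than in the ambient matrix space.
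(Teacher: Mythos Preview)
Your approach is essentially the paper's: identify $P(D)$ with $P(G,f)$ for the central idempotent $f=\tfrac{1}{|G|}\sum_g\gamma(g^{-1})g$, then recognize that the idempotent produced by Theorem~\ref{t:concretesplitting} is $f$ again, so that the function in Corollary~\ref{c:affsymchar} is $\gamma/|G|$. The only cosmetic difference is that the paper applies Theorem~\ref{t:concretesplitting} to $v=I$ in the matrix span of $D(G)$, while you apply it to $v=f$ inside $\reals Gf$; these setups are isomorphic.

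One step deserves a cleaner justification: your line ``$\mu(f)=f\cdot f=f$'' is not literally how $\mu$ is defined. The correct argument is that $\mu$ is a $\reals G$\nbd module map, so its image lies in the $f$\nbd isotypic part $\reals Gf$ (as $(1-f)\mu(x)=\mu((1-f)x)=0$); hence $\mu(x)f=\mu(x)$, and combining this with the splitting identity $\mu(x)f=x$ gives $\mu(x)=x$ for all $x\in\reals Gf$, in particular $\mu(f)=f$. With that fix, your proof is complete and matches the paper's.
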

\begin{proof}
  The representation polytope
   $P(D)$ is isomorphic to $P(G,f)$ 
   with $f\in \Z(\reals G)$ and $\ker D = \reals G(1-f)$. 
   Then 
  \[ f = \frac{1}{\abs{G}}\sum_{g\in G} \gamma(g^{-1})g
  \]
  by the remarks above.
  On the other hand, we may view 
  $P(D)$ as a full-dimensional orbit polytope $P(G,v)$
  in some space $\reals^d$, and we may construct
  $f$ as in Theorem~\ref{t:concretesplitting}.
  It follows that
  \[ (1/\abs{G}) \gamma(g)
     = (g^{-1}v)^t Q^{-1} v
     = v^t Q^{-1} (gv).
  \]
  The result now follows from Corollary~\ref{c:affsymchar}.
\end{proof}
Notice that the character $\gamma$ is in general \emph{not}
the character of the representation.
Two representations yield affinely $G$\nbd equivalent
representation polytopes 
if and only if they have the same non-trivial constituents.
For all these representations, we have to use the same character~$\gamma$
to compute the affine symmetries.

We close this section with the following surprising characterization of 
representation polytopes
among orbit polytopes, which is Theorem~\ref{introt:reppolyinvsym}
from the introduction:
\begin{thm}\label{t:reppolinvsym}
  Let $P(G,v)$ be an orbit polytope of a finite group $G$.
  Then $P(G,v)$ is affinely $G$\nbd equivalent to a
  representation polytope of $G$ if and only if there
  is an $\alpha\in \AGL(P(G,v))$
  such that $\alpha(gv)= g^{-1}v$
  for all $g\in G$.
\end{thm}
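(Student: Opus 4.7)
The plan is to translate the condition into the group algebra setting of Section~\ref{sec:groupalg}. By Theorem~\ref{t:concretesplitting}, $P(G,v)$ is affinely $G$\nbd equivalent to $P(G,e)$, where
\[ e = \sum_{g\in G} f(g)\,g \in \reals G, \qquad f(g) = v^t Q^{-1}(gv), \qquad Q = \sum_{g\in G} (gv)(gv)^t,\]
is an idempotent with $ev=v$. Theorem~\ref{t:isreppolytop} characterizes when $P(G,e)$ is affinely $G$\nbd equivalent to a representation polytope: exactly when $e\in \Z(\reals G)$, and this is in turn equivalent to the coefficient function $f$ being a class function on $G$. Hence the theorem reduces to showing that the permutation $\pi(g)=g^{-1}$ is induced by some $\alpha\in\AGL(P(G,v))$ if and only if $f$ is a class function.

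For the converse direction, Corollary~\ref{c:affsymchar} states that such an $\alpha$ exists if and only if
\[ f(g^{-1}h) = f(gh^{-1}) \quad\text{for all } g,h\in G.\]
The key step is that $f(g)=f(g^{-1})$ holds automatically: using the identity $Q^{-1}g^{-1}=g^tQ^{-1}$ from Section~\ref{sec:linisocrit} together with the symmetry of $Q^{-1}$,
\[ f(g^{-1}) = v^tQ^{-1}(g^{-1}v) = v^tg^tQ^{-1}v = (gv)^tQ^{-1}v = v^tQ^{-1}(gv) = f(g).\]
Substituting $h=gk$ in the displayed relation and applying $f(x)=f(x^{-1})$ then yields $f(k)=f(gkg^{-1})$, so $f$ is a class function.

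For the forward direction, suppose $e\in \Z(\reals G)$. The identity $f(g)=f(g^{-1})$ (equivalently, the fact that real-valued characters are invariant under inversion) gives $\sum_h f(h)h^{-1}=e$, so the linear involution $\alpha\colon \reals G\to \reals G$ extending $g\mapsto g^{-1}$ fixes $e$. Using centrality,
\[ \alpha(ge) = \sum_h f(h)(gh)^{-1} = \Bigl(\sum_h f(h)h^{-1}\Bigr)g^{-1} = eg^{-1} = g^{-1}e,\]
so $\alpha$ restricts to a linear symmetry of $P(G,e)$ sending each vertex $ge$ to $g^{-1}e$. Transporting along the affine $G$\nbd equivalence $P(G,v)\cong P(G,e)$ produces the required symmetry of $P(G,v)$.

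The only non-routine point is the identity $f(g)=f(g^{-1})$, which reflects the $G$\nbd invariance of the bilinear form defined by $Q$ (equivalently, $gQg^t=Q$ for all $g\in G$). With this in hand, both implications follow directly from Theorems~\ref{t:concretesplitting} and~\ref{t:isreppolytop} together with Corollary~\ref{c:affsymchar}.
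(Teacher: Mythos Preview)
Your proof is correct and follows essentially the same route as the paper: both directions reduce the question to centrality of the idempotent via Theorems~\ref{t:concretesplitting} and~\ref{t:isreppolytop}, both invoke Corollary~\ref{c:affsymchar} to translate the inversion symmetry into the relation $f(g^{-1}h)=f(gh^{-1})$, and both use the unconditional identity $f(g)=f(g^{-1})$ (from $g^tQ^{-1}=Q^{-1}g^{-1}$) as the key step. The only difference is in the easy direction: the paper simply cites Proposition~\ref{p:reppol_bigsym}\ref{it:invsym} (transposition after orthogonalizing), whereas you construct the inversion symmetry directly inside $\reals G$ and show it preserves $\reals Ge$ using centrality---these are two descriptions of the same map.
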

\begin{proof}
  We have seen in Proposition~\ref{p:reppol_bigsym}\ref{it:invsym}
  that a representation polytope
  $P(D)$ has an affine symmetry mapping $D(g)$ to $D(g^{-1})$.
  
  Conversely, assume that there is such $\alpha$.
  Write $f(g)= v^t Q^{-1}(gv)$ as in Section~\ref{sec:linisocrit}.
  By Corollary~\ref{c:affsymchar}, we have that
  $f(gh^{-1}) = f(g^{-1}h)$ for all $g$, $h\in G$.
  But we also have
  \begin{align*}
    f(g^{-1}) = v^t Q^{-1}(g^{-1}v)
              &= \left( v^t Q^{-1}(g^{-1}v)\right)^t
           \\ &= v^t (g^{-1})^t Q^{-1} v
               = v^t Q^{-1} gv 
               = f(g).
  \end{align*}
  Combining both properties, we get
  \[f(hg^{-1})= f\big((hg^{-1})^{-1}\big)
              = f(gh^{-1})= f(g^{-1}h).
  \]
  It follows that 
  \[ f = \sum_{g\in G}f(g^{-1})g \in \Z(\reals G).
  \]
  By Theorem~\ref{t:concretesplitting}, we have
  $P(G,v)\iso P(G,f)$,
  and Theorem~\ref{t:isreppolytop}
  yields that $P(G,f)\iso P(D)$
  for some representation $D$.
\end{proof}

\section{Some orbit polytopes of elementary abelian 2-groups}
\label{sec:elab2}
In this section we show that every elementary abelian 2-group
of order $2^n$ with $n\geq 5$ is the
affine symmetry group of one of its orbit polytopes. 
To do this, we show that \emph{cut polytopes} of graphs are orbit 
polytopes of elementary abelian $2$-groups, and then 
exhibit a class of graphs such that the corresponding orbit polytopes
have no additional affine symmetries.
At the end of the section, we also explain why these orbit polytopes
yield counterexamples to a conjecture of 
Baumeister et al.~\cite[Conjecture~5.4]{BHNP09}.

We begin with some general remarks.
Recall that an elementary abelian $2$\nbd group $G$ of order $2^n$
is isomorphic to the additive group $\GF{2}^n$ and can be viewed as
a vector space over $\GF{2}$.
Every representation $G\to \GL(d,\reals)$ is similar to 
a representation $D$ of the form
\[ g \mapsto D(g)
   = \begin{pmatrix}
      \lambda_1 (g) &&& \\
       & \lambda_2(g) && \\
       && \ddots & \\
       &&& \lambda_d(g)
     \end{pmatrix},
\] 
where each $\lambda_i\colon G\to \{\pm 1\}$ is a linear character 
which is a constituent of $D$.
Every simple $\reals G$\nbd module is one-dimensional and corresponds
to a unique linear character of $G$.
We have $\reals G \iso \reals^{\abs{G}}$ (as $\reals$-algebras).
By Theorem~\ref{t:mod_fdpolytop},
$\reals^d$ contains full-dimensional orbit polytopes of $G$
if and only if all $\lambda_i$'s are different and the trivial
character is not among them.

It follows that every representation 
$D\colon \GF{2}^n \to \GL(d,\reals)$ is similar to one arising from
the following construction:
Let $C$ be a $d\times n$\nbd matrix over $\GF{2}$.
For a vector $y=(y_1,\dotsc, y_d)^t\in \GF{2}^d$,
  we write $(-1)^y= ((-1)^{y_1}, \dotsc, (-1)^{y_d})^t\in \reals^d$.
  Then define a representation $D$
  by $D(x)= \diag( (-1)^{Cx} )$ for $x\in \GF{2}^n$.
The representation $D$ is faithful if and only if
$C$ has rank $n$.
Every orbit polytope is affinely $G$\nbd equivalent to
a representation polytope  
(by Corollary~\ref{c:realidealgroups}, but it is easy to see this
 directly here).
 
Notice that the character of such a representation is given by
$\gamma(x)= d-2w(Cx)$, where $w(y)$ denotes the Hamming weight 
of $y\in \GF{2}^d$.
The rows of $C$ correspond to the irreducible constituents
of $D$.
The vector space $\reals^d$ contains full-dimensional orbit polytopes
if all rows of $C$ are different, and $C$ has no zero row.
Equivalently, we have $[\gamma, \lambda] \in \{0,1\}$
for all $\lambda\in \Irr G $ and $[\gamma,1_G]=0$.
For convenience, let us call such a character an
\defemph{ideal character}.

If $\gamma$ is an ideal character, then a permutation $\pi$ of 
$G= \GF{2}^n$
yields an affine symmetry of $P(D)$ if and only if
$\gamma(\pi(y) - \pi(x)) = \gamma(y-x)$ for all $x$, $y\in \GF{2}^n$.
(This is Theorem~\ref{t:reppolyaffsyms} with additive notation
 for the group $G$.)
In particular, every automorphism of $G=\GF{2}^n$ that fixes 
$\gamma$ induces an affine symmetry of the representation polytope
which maps $0$ to $0$. 
If there is such an automorphism, then
$\AGL( P(D)) > D(G)$.
This can be used to prove the following:
\begin{lemma}\label{l:ea234}
  All orbit polytopes of the elementary abelian $2$\nbd groups
  of orders $4$, $8$ and $16$ have additional affine symmetries.
\end{lemma}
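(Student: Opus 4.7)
My plan is to exhibit, for every (faithful, full-dimensional) orbit polytope of $G=\GF{2}^n$ with $n\in\{2,3,4\}$, a non-identity automorphism of $G$ that fixes the associated ideal character $\gamma$; by the discussion immediately preceding the lemma, such an automorphism produces an affine symmetry of the polytope fixing the identity vertex and therefore lying outside $D(G)$. By Corollary~\ref{c:realidealgroups} I may replace any orbit polytope of $G$ by an affinely $G$\nbd equivalent representation polytope $P(D)$, and after passing to $G/\ker D$ (still elementary abelian of order $\leq 16$) and restricting to the affine hull I may assume $D$ is faithful and $P(D)$ is full-dimensional. In the parametrization set up before the lemma, $D$ is then given by a $d\times n$ matrix $C$ over $\GF{2}$ of rank $n$ with distinct non-zero rows forming a set $S\subseteq \GF{2}^n\setminus\{0\}$, and $\gamma(x)=d-2w(Cx)$.

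The first key step is to translate the condition $\gamma\circ\alpha=\gamma$ for $\alpha\in\GL_n(\GF{2})$ into a combinatorial one. It reads $w(C\alpha x)=w(Cx)$ for all $x$, and using that the functions $x\mapsto c\cdot x$ for $c\in\GF{2}^n\setminus\{0\}$ are linearly independent over $\reals$ (via $f_c=(1-\chi_c)/2$ they correspond to the distinct non-trivial characters of $G$), this is equivalent to the row multiset of $C\alpha$ equalling that of $C$, hence --- since the rows of $C$ are distinct --- to $\alpha$ permuting $S$ under the action $c\mapsto c\alpha$ on row vectors.

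The main step would then be a crude orbit-stabilizer count: the $\GL_n(\GF{2})$\nbd orbit of $S$ is contained in the family of $\abs{S}$\nbd subsets of $\GF{2}^n\setminus\{0\}$, so
\[
 \abs{\mathrm{Stab}(S)} \;\geq\; \frac{\abs{\GL_n(\GF{2})}}{\binom{2^n-1}{\abs{S}}}
 \;\geq\; \frac{\abs{\GL_n(\GF{2})}}{\binom{2^n-1}{\lfloor (2^n-1)/2\rfloor}}.
\]
For $n=2,3,4$ this bound reads $6/3=2$, $168/35>4$, and $20160/6435>3$, respectively, so in every case the stabilizer has order at least $2$ and supplies the required $\alpha$.

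The only genuine delicacy is the initial reduction --- making sure that degenerate or non-faithful orbit polytopes are covered (or else fall outside the scope intended by the lemma, since a point or a segment has trivially no extra symmetries) --- but this is routine. Once the setup is in place, the combinatorial translation above is the key insight, and the conclusion reduces to verifying the three inequalities displayed above; note that they fail starting at $n=5$, consistent with Theorem~\ref{introt:elab2} showing that the restriction to $n\leq 4$ is sharp.
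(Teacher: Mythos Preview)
Your proposal is correct and follows essentially the same route as the paper's own proof. Both arguments identify ideal characters with subsets $S\subseteq\GF{2}^n\setminus\{0\}$, let $\GL(n,2)$ act on these, and then use orbit--stabilizer together with the inequality $\binom{2^n-1}{d}<\lvert\GL(n,2)\rvert$ (valid for all $d$ when $n\in\{2,3,4\}$) to force a nontrivial stabilizer, which by the paragraph preceding the lemma yields an extra affine symmetry. You spell out two points the paper leaves implicit---the reduction to a faithful representation polytope via Corollary~\ref{c:realidealgroups}, and the linear-independence argument showing that $\gamma\circ\alpha=\gamma$ is equivalent to $\alpha$ permuting the row set $S$---but the core counting step is identical.
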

\begin{proof} 
  The group $\Aut(G) = \GL(n,2)$ acts on the set of ideal characters
  of degree $d$ by $(\gamma,A)\mapsto \gamma \circ A$
  for a character $\gamma$ and $A\in \GL(n,2)$.
  There are $\binom{2^n-1}{d}$ ideal characters of degree $d$.
  It follows that when $\binom{2^n-1}{d} < \abs{\GL(n,2)}$,
  then every ideal character of degree~$d$ has non-trivial stabilizer
  in $\GL(n,2)$.
  The elements in the stabilizer of an ideal character
  induce additional affine symmetries of the corresponding orbit polytope.
  But for $n=2$, $3$ and $4$, we have
  $\binom{2^n-1}{d} < \abs{\GL(n,2)}$ for all $d$.
  (E.\,g., for $n=4$, $d=7$
   we have $\binom{15}{7} = 6435 < \abs{\GL(4,2)}= 20160$.)
  Thus orbit polytopes of the elementary abelian $2$\nbd groups
  of orders $4$, $8$ and $16$ have additional affine symmetries.
\end{proof}
\begin{remark}
  We now digress to describe the orbit polytopes for the
  elementary abelian groups of orders $4$ and $8$.

  For $G= \GF{2}^2$, the only possible orbit polytopes
  of $G$ with  $\abs{G}=4$ vertices are the square in dimension $2$
  and the $3$\nbd simplex (tetrahedron) in dimension $3$. 
  The square has affine symmetry group $D_4$ of order $8$,
  and the $3$\nbd simplex has affine symmetry group $S_4$ of order $24$.

  Before we describe the polytopes for $ G = \GF{2}^3 $,
  we make some general remarks.
  If two ideal characters of $G$ are in the same orbit under
  $\Aut(G)$, then the corresponding orbit polytopes of $G$ are affinely
  equivalent.
  (If the ideal characters belong to different orbits,
  then it can still happen that the corresponding
  orbit polytopes are affinely 
  equivalent~\cite{BaumeisterGrueninger15},
  but at least for the elementary abelian groups of orders
  $4$, $8$ and $16$, this is not the case.)
  Thus the number of orbit polytopes up to affine equivalence
  is at most the number of $\Aut(G)$\nbd orbits on the set of ideal 
  characters.
  
  From this count, we can exclude the ideal characters
  that have a nontrivial kernel, because then the
  corresponding orbit polytope can be viewed 
  as an orbit polytope of a proper
  factor group.
  For example, for $G=\GF{2}^3$, 
  we get six orbits of $\Aut(G)=\GL(3,2)$
  on the faithful ideal characters, 
  namely two on the faithful ideal characters of degree $4$, 
  and one in each of the dimensions $3$, $5$, $6$ and $7$.
  For $G= \GF{2}^4$, we get $36$ orbits of faithful ideal characters, 
  and it turns out that the polytopes associated with different orbits
  are not affinely equivalent.
 
  We now briefly describe the six non-equivalent orbit polytopes of
  $G= \GF{2}^3$. 
  In dimension $3$, every orbit polytope is affinely equivalent
  to the cube,
  with symmetry group of order $48$.
  (More generally, the only $n$\nbd dimensional orbit polytope
  of $\GF{2}^n$ is the $n$\nbd dimensional cube,
  up to affine equivalence.)
  
  In dimension $4$, there are two polytopes.
  The first one is a Gale dual of the $3$\nbd dimensional
  cube, as in Corollary~\ref{c:gale},
  and thus has an affine symmetry group of order $48$
  which is isomorphic to the group of the cube.
  The other polytope is a Gale dual of the $3$-simplex,
  viewed as orbit polytope of $G$, where a subgroup of order  $2$ acts trivially.
  The affine symmetry group of this orbit polytope 
  in dimension $4$ is the wreath product
  $C_2 \wr S_4 = (C_2)^4 \rtimes S_4$
  of order $2^4 \cdot 4! = 384$.
  (It is not difficult to see that in this particular case,
  the orbit polytope is just the $4$\nbd dimensional
  cross polytope.)
  
  Similarly, the only orbit polytopes up to affine equivalence 
  in dimensions $5$ and $6$
  are Gale duals of a square and a line segment, and have 
  affine symmetry groups of orders $2^4\cdot 8 = 128$
  and $(4!)^2\cdot 2 = 1152$, respectively.
  And of course in dimension $7$, there is only the simplex
  with affine symmetry group of order $8! = 40320$.
\end{remark}
\begin{example}\label{exp:e2group}
  Consider the following 
  $12\times 5$-matrix over $\GF{2}$:
  \[ {\setcounter{MaxMatrixCols}{12}
    C=\begin{pmatrix*}[r]
        1& 0& 0& 0& 0& 1& 0& 0& 1& 1& 0& 1 \\ 
        0& 1& 0& 0& 0& 1& 1& 1& 1& 1& 1& 1 \\ 
        0& 0& 1& 0& 0& 0& 1& 0& 0& 1& 1& 1 \\ 
        0& 0& 0& 1& 0& 0& 0& 1& 1& 1& 1& 1 \\ 
        0& 0& 0& 0& 1& 0& 0& 0& 0& 0& 1& 1
      \end{pmatrix*}^t
  }.
  \]
  The representation
  $D\colon \GF{2}^5 \to \GL(12, \reals)$
  defined by $x \mapsto D(x)= \diag( (-1)^{Cx} )$
  yields a subgroup of $\GL(12, \reals)$
  generated by diagonal matrices corresponding to the rows 
  of the matrix above.
  We computed
  (using Theorem~\ref{t:reppolyaffsyms} and the computer algebra 
  system
  GAP~\cite{GAP475}) that the affine symmetry group 
  of the corresponding representation polytope
  contains no additional elements.
  (This representation polytope is isomorphic to a
   permutation polytope of the same group, 
   see Lemma~\ref{l:elab2repperm} below.)
\end{example}
  Computational experiments suggest that if
  $n < d < 2^n - 1 - n$ and $d$ is ``sufficiently far'' from
  both $n$ and $2^n-1-n$,
  then most possible choices of $C$ yield
  a representation polytope $P(D)$ with no additional affine 
  symmetries.

In the remainder of this section, 
we construct orbit polytopes of $G\iso \GF{2}^n$ 
without additional symmetries for $n\geq 6$. 
For this purpose we consider a restricted class of ideal characters 
coming from graphs.

Let $\Gamma = (V,E)$ be a finite simple graph with vertex set $V$ and 
edge set $E\subseteq \binom{V}{2}$. 
We consider the power sets $\Pow{V}$ and $\Pow{E}$ as 
vector spaces over $\mathbb{F}_2$, where the vector addition is given 
by symmetric difference, i.e. 
$A + B = (A \setminus B) \cup ( B \setminus A )$. 
Define 
\[C \colon \Pow{V} \to \Pow{E}
    , \quad 
    A \mapsto \{ e \in E \mid \abs{e \cap A} = 1 \}, 
\] 
i.e., a vertex set $A$ is mapped to the set of edges which 
connect an element of $A$ with an element of $V\setminus A$.
This is a linear map, as is checked easily.
The matrix of $C$ with respect to the standard bases of
$\Pow{V}$ and $\Pow{E}$
is the incidence matrix of the graph
$\Gamma$, that is, the $E\times V$\nbd matrix 
$C=(c_{ev})_{e\in E, v\in V}$ 
with entry $c_{ev}=1$ if $v\in e$ and $c_{ev}=0$ otherwise.
We call the image of $C$ the 
\emph{cut space} of $\Gamma$ and denote it by $C\Gamma$. 
The elements 
of $C\Gamma$ are called \emph{cut sets}. 

We collect some easy 
facts about the cut space.
\begin{lemma}\hfill
  \begin{enumthm}\label{l:cutsets}
  \item \label{obs:dim_cg}
       The kernel of $C$ is generated by the vertex sets of the
       connected components of $\Gamma$. 
       Therefore, $C\Gamma$ is a $(\abs{V}-t)$-dimensional subspace of
       $\Pow{E}$, where $t$ is the number of those components. 
       In particular, $C\Gamma$ has dimension $\abs{V}-1$ if 
       $\Gamma$ is connected.
  \item \label{obs:bip}
       As a subgraph of $\Gamma$, any cut set is bipartite. 
       In particular all circles in a cut set are of even length.
  \end{enumthm}
\end{lemma}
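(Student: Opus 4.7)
The plan is to prove both parts by direct verification using the definition of $C$, which sends $A \subseteq V$ to the set of edges with exactly one endpoint in $A$.

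For part (i), I would first characterize the kernel. A set $A \in \Pow{V}$ lies in $\ker C$ iff every edge $e = \{u,v\} \in E$ satisfies $|e \cap A| \in \{0, 2\}$, i.e., $u$ and $v$ are either both in $A$ or both outside $A$. Equivalently, membership in $A$ is constant on each connected component of $\Gamma$. So $\ker C$ consists precisely of all unions of connected components, which as a subspace of $\Pow{V}$ has basis given by the vertex sets of the $t$ components, hence $\dim \ker C = t$. The dimension statement then follows from the rank-nullity theorem: $\dim C\Gamma = \dim \Pow{V} - \dim \ker C = |V| - t$. The connected case is the special case $t = 1$.

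For part (ii), given any cut set $S = C(A)$, I would exhibit a bipartition of the subgraph $(V(S), S)$ (where $V(S)$ denotes the vertices incident to some edge in $S$). By definition of $C$, every $e \in S$ has exactly one endpoint in $A$ and one in $V \setminus A$. Thus setting $V_1 = V(S) \cap A$ and $V_2 = V(S) \setminus A$ gives a bipartition: every edge of $S$ runs between $V_1$ and $V_2$. The statement about cycles is then the standard fact that bipartite graphs contain no cycles of odd length.

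Neither part presents a serious obstacle; both reduce to unwinding the definition of $C$. The only minor subtlety is taking care, in part (i), that the vertex sets of the components are genuinely linearly independent in $\Pow{V}$ — but this is immediate since they are pairwise disjoint and non-empty, so they even form a basis of $\ker C$ rather than merely a spanning set.
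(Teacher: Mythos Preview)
Your proposal is correct. The paper itself does not supply a proof of this lemma at all; it simply introduces it with the phrase ``We collect some easy facts about the cut space'' and states the two items without argument. Your verification is exactly the standard one a reader would fill in, so there is nothing to compare.
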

Let $D\colon \Pow{V}\to \GL(E,\reals)$ be the representation
defined (as above) by $D(A) = \diag((-1)^{C(A)})$,
so that $A$ is sent to the diagonal $E \times E$\nbd matrix
with entry $d_{ee}= -1$ if $e\in C(A)$ and $d_{ee}=1$ if
$e\not\in C(A)$,
and let $\chi$ be the character of $D$.
Then $\chi(A)= \abs{E}-2\abs{C(A)}$ and 
$\ker\chi = \ker C$, 
and $\chi$ is an ideal character.
Thus we know that full-dimensional orbit polytopes of 
$\Pow{V}/\ker C$ in $\mathbb{R}^{\abs{E}}$ exist, 
and that they are centered at the origin and affinely equivalent to
each other. 
Actually, they are affinely equivalent to the 
so-called \emph{cut polytope} of $\Gamma$.

Since $D$ and $\chi$ are not faithful as representation
and character of $\Pow{V}$, 
it is more convenient to view them as representation
and character of the elementary abelian group $C\Gamma$.
Thus $\chi(S)= \abs{E}-2\abs{S}$ for $S\subseteq E$, $S\in C\Gamma$. 
The vertices of the cut polytope correspond to the cut sets.
The affine symmetries of the cut polytope 
are induced by those permutations $\sigma$ of 
$C\Gamma$ which satisfy 
$\chi(S^\sigma + T^\sigma) = \chi(S + T)$ 
for all $S,T \in C\Gamma$. 
The group $G=C\Gamma$ itself induces  
a subgroup of the affine symmetry group of its orbit polytope, 
which acts regularly on the vertices of the orbit polytope.
Since we want to know whether or not the affine symmetry
group is strictly larger than $G$,
it suffices to study the stabilizer of an element 
in the affine symmetry group.
We call a permutation $\sigma$ of the elements of $C\Gamma$ 
\emph{admissible} if it satisfies $\emptyset^\sigma = \emptyset$ and 
$\abs{S^\sigma + T^\sigma} = \abs{S + T}$ for all $S,T \in C\Gamma$. 
The admissible permutations are exactly the permutations induced 
by affine symmetries of the cut polytope stabilizing the identity.
We now list some useful properties of admissible permutations.
\begin{lemma} \label{lm:properties}
For all $S$, $T \in C\Gamma$ and all admissible permutations 
$\sigma$ we have
\begin{enumthm}
\item $\abs{S^\sigma} = \abs{S}$,
\item $\abs{S^\sigma \cap T^\sigma} = \abs{S \cap T}$.
\end{enumthm}
\begin{proof}
The first equation follows directly from the definition of admissible 
maps by setting $T = \emptyset$. 
The second equation follows from the first one, 
and from $\abs{S \cap T} = \abs{S} + \abs{T} - \abs{S + T}$. 
\end{proof}
\end{lemma}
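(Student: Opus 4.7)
The plan is to derive both assertions directly from the two defining conditions of admissibility: $\emptyset^\sigma = \emptyset$ and $\abs{S^\sigma + T^\sigma} = \abs{S + T}$ for all $S, T \in C\Gamma$. Part~(i) will follow by a trivial specialization of the second condition, and part~(ii) by combining (i) with a standard identity relating symmetric difference and intersection.

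For (i), I would set $T = \emptyset$ in the admissibility relation. Since $\emptyset^\sigma = \emptyset$, the right-hand side becomes $\abs{S + \emptyset} = \abs{S}$ and the left-hand side collapses to $\abs{S^\sigma}$, giving $\abs{S^\sigma} = \abs{S}$ at once.

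For (ii), the key observation is the general identity $\abs{A + B} = \abs{A} + \abs{B} - 2\abs{A \cap B}$, valid for arbitrary finite sets $A$ and $B$, since $A + B = (A \cup B) \setminus (A \cap B)$ and $\abs{A \cup B} = \abs{A} + \abs{B} - \abs{A \cap B}$. Rearranging yields $\abs{A \cap B} = \frac{1}{2}(\abs{A} + \abs{B} - \abs{A + B})$. Applying this both to the pair $(S, T)$ and to the pair $(S^\sigma, T^\sigma)$, and then invoking part~(i) together with the admissibility hypothesis on the right-hand side in the latter case, the two expressions agree, so $\abs{S^\sigma \cap T^\sigma} = \abs{S \cap T}$.

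There is really no main obstacle here: both assertions are essentially bookkeeping with the admissibility axioms. The only point worth naming is the linear identity above, which is what translates a hypothesis phrased in terms of symmetric differences into a conclusion in terms of intersections; once this identity is noted, each part is a one-line calculation. The fact that $S^\sigma$ and $T^\sigma$ are themselves elements of $C\Gamma$ (and thus fall within the domain of the admissibility condition) is automatic because $\sigma$ is a permutation of $C\Gamma$.
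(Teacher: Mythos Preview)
Your proof is correct and follows exactly the paper's approach: part~(i) by specializing $T=\emptyset$, and part~(ii) via the identity expressing $\abs{S\cap T}$ in terms of $\abs{S}$, $\abs{T}$, and $\abs{S+T}$. (In fact your version of the identity, with the factor $\tfrac{1}{2}$, is the accurate one; the paper's stated formula drops this factor, though the argument is unaffected.)
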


Let $\pi$ be a graph automorphism of $\Gamma$. 
Then $\pi$ induces 
an admissible permutation of $C\Gamma$ in a natural way. 
In general, 
not every admissible permutation comes from an automorphism, e.g. if 
$\Gamma$ is a forest, then any singleton set $\{e\}$ is an element of 
$C\Gamma$, and therefore each permutation of $E$ leads to an 
admissible permutation. However, it is clear that not each of these 
permutations is induced by a graph automorphism unless all connected 
components of $\Gamma$ contain at most one edge.

For an 
admissible map $\sigma$ to be induced by a graph automorphism, it is 
clearly necessary that $\sigma$ maps \emph{principal cut sets} 
$C(\{v\})$ (or simply $C(v)$) to principal cut sets again. 
We show 
that for certain graphs this condition is already sufficient.

\begin{lemma} \label{lm:suff}
Let $\Gamma$ be a graph such that no cut set is a cycle 
of length $4$, 
and let $\sigma$ be an admissible map 
which permutes principal cut sets of $\Gamma$. 
Then $\sigma$ is induced by a graph automorphism.
\begin{proof}
Let $\pi$ be the permutation of the vertices with $C(u)^\sigma = 
C(u^\pi)$ for all $u \in V$. 
Then $\pi$ is a graph automorphism: 
Two vertices $v,w$ are adjacent 
if and only if $\abs{C(v) \cap C(w)} = 1$, 
if and only if $\abs{C(v)^\sigma \cap C(w)^\sigma} = 1$,
if and only if $v^\pi$ and $w^\pi$ are adjacent.    

Since any graph automorphism induces an admissible 
permutation, we may replace $\sigma$ by the composition $\sigma \circ 
\pi^{-1}$. 
Thereby, we reduced the problem to show that any 
admissible permutation $\sigma$ which fixes all principal cut sets 
must be the identity.

For this purpose we will first show that also $S := C(u) + C(v)$ is 
fixed by $\sigma$ for any pair of adjacent vertices $u,v \in V$. 
Let $e=uv$. We have $C(u)\cap S = C(u)\setminus \{e\}$ and 
$\abs{C(u) \cap S^\sigma} = \abs{C(u) \cap S} = \abs{C(u)}-1$.
If $e\not\in S^{\sigma}$, then $C(u)\cap S^{\sigma} = C(u)\cap S$,
and similarly $C(v)\cap S^{\sigma} = C(v) \cap S$,
and thus $S=S^{\sigma}$.

So if $S^{\sigma}\neq S$, then $e\in S^{\sigma}$.
Then there must be 
an edge $f  \in C(u)\setminus S^{\sigma}$, 
and an edge  $g  \in C(v) \setminus S^\sigma$, with $f = ux$ and 
$g = vy$ for some vertices $x\neq v$ and $y\neq u$. 
Thus $S^\sigma$ contains all edges of $S$ but $f$ and $g$, 
and contains $e$ which is not in $S$.
Since $\abs{S^{\sigma}}=\abs{S}$,
there is exactly one further edge $h$ in 
$S^\sigma \setminus S$. 
Since $\abs{C(x) \cap S^\sigma} = 
\abs{C(x) \cap S} $ and $f\in C(x)\cap S$, 
but $f\not\in C(x)\cap S^{\sigma}$,
and $e\not\in C(x)$,
we conclude that $h\in C(x)\cap S^{\sigma}$.
The same argument with $y$ and $g$ instead of $x$ and $f$ shows that
$h\in C(y)\cap S^{\sigma}$.
Thus we have $h\in C(x)\cap C(y)$.
So either $x=y$ or $h=xy$.

If $x=y$, then the cut set $S^\sigma + C(x)$ 
contains the circle $uvx$ of odd length 3 which contradicts 
Lemma~\ref{l:cutsets}\ref{obs:bip}. 
If $h=xy$, then  
$S^\sigma + S = \{ f,g,e,h \}$ is a cut set of $\Gamma$ which 
clearly forms a cycle of length $4$ in contradiction to our assumption. 
Hence we have shown that $S^\sigma = S$.

To finish the proof, we notice that for any edge $e=uv$ and any cut 
set $S$, we have $e \in S$ if and only if $\abs{S \cap (C(u) + C(v))} < 
\abs{S \cap C(u)} + \abs{S \cap C(v)}$. 
By the previous steps and Lemma~\ref{lm:properties},
the latter condition is clearly invariant under 
$\sigma$, i.e. we have $e \in S \iff e \in S^\sigma$, and hence we 
can conclude $S^\sigma = S$ for all cut sets $S$.
\end{proof}
\end{lemma}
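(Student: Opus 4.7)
The plan is first to use the action of $\sigma$ on principal cut sets to construct a candidate graph automorphism $\pi$, then reduce to the case where $\sigma$ fixes every principal cut set, and finally detect edge-membership in an arbitrary cut set from intersection sizes so as to force $\sigma$ to be the identity.

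More concretely, I would define a permutation $\pi$ of $V$ by the relation $C(u)^\sigma = C(u^\pi)$, which is well-defined by hypothesis. Using Lemma~\ref{lm:properties}(ii), I observe that $\abs{C(u) \cap C(v)}$ equals $1$ if $uv \in E$ and $0$ otherwise, so $\sigma$-invariance of pairwise intersection sizes forces $\pi$ to preserve adjacency and hence to be a graph automorphism. Since every graph automorphism induces an admissible permutation, and admissible permutations form a group under composition, I can replace $\sigma$ by $\sigma$ composed with the inverse of the admissible map induced by $\pi$, reducing to the case where $\sigma$ fixes every principal cut set. The task becomes to show that such a $\sigma$ must be the identity.

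The heart of the argument is showing that $S := C(u) + C(v)$ is fixed by $\sigma$ for every edge $e = uv$. From $C(u) \cap S = C(u) \setminus \{e\}$ and admissibility I get $\abs{C(u) \cap S^\sigma} = \abs{C(u)} - 1$, and similarly for $v$. I would split according to whether $e \in S^\sigma$: the case $e \notin S^\sigma$ quickly forces $C(u) \cap S^\sigma = C(u) \cap S$, and similarly for $v$, whence $S^\sigma = S$ by the cardinality constraint. In the case $e \in S^\sigma$, a careful accounting produces edges $f = ux$ and $g = vy$ in $C(u)\setminus S^\sigma$ and $C(v)\setminus S^\sigma$, together with a unique additional edge $h \in S^\sigma \setminus S$; intersection-count preservation against $C(x)$ and $C(y)$ then forces $h \in C(x) \cap C(y)$, so either $x = y$ (giving a $3$-cycle inside the cut set $S^\sigma + C(x)$, which contradicts the bipartiteness clause of Lemma~\ref{l:cutsets}(ii)) or $h = xy$ (giving a $4$-cycle cut set $\{e,f,g,h\}$, contradicting the hypothesis on $\Gamma$). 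This double-contradiction step, where both structural hypotheses are simultaneously invoked and the bookkeeping of which edges are gained or lost is delicate, is where I expect the main difficulty to lie.

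With every $C(u) + C(v)$ for adjacent $u,v$ now fixed, I would close the argument by observing that membership of an edge $e = uv$ in an arbitrary cut set $S$ is detected intrinsically by the strict inequality
\[
   \abs{S \cap (C(u)+C(v))} \;<\; \abs{S \cap C(u)} + \abs{S \cap C(v)},
\]
which, by inclusion--exclusion, holds iff $uv \in S \cap C(u) \cap C(v)$. Since $\sigma$ preserves each of the three intersection sizes appearing on both sides (by Lemma~\ref{lm:properties} applied to the three already-fixed cut sets $C(u)$, $C(v)$, $C(u)+C(v)$), it preserves edge membership, forcing $S^\sigma = S$ for every cut set $S$ and completing the proof.
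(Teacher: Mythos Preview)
Your proposal is correct and follows essentially the same approach as the paper's proof: the same reduction via the induced automorphism $\pi$, the same case split on whether $e\in S^{\sigma}$ for $S=C(u)+C(v)$, the same contradiction using bipartiteness (for $x=y$) versus the no-$4$-cycle hypothesis (for $h=xy$), and the same final edge-detection via the inequality $\lvert S\cap(C(u)+C(v))\rvert < \lvert S\cap C(u)\rvert + \lvert S\cap C(v)\rvert$. Your added inclusion--exclusion justification of the last step is a nice touch but not a departure from the paper's line of argument.
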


We now introduce a class of graphs where each admissible map is 
induced by a graph automorphism.
These graphs will be complements of certain trees.
Recall that a vertex cover of a graph $\Gamma=(V,E)$ is a 
subset $A\subseteq V$ of the vertices of $\Gamma$ such that
every edge of $\Gamma$ is adjacent to at least one vertex in $A$.
Write $\tau(\Gamma)$ for the smallest possible size of a vertex cover of
$\Gamma$.
Let
\[ \mathcal{T} := 
    \{ \Gamma = (V,E) \mid \abs{V} \geq 7, 
       \overline{\Gamma} \text{ is a tree}, 
       \tau(\overline{\Gamma}) > 3 \},\] 
where $\overline{\Gamma}$ denotes 
the complement graph of $\Gamma$. 
We will use a simple estimate of 
the sizes of cut sets and Lemma \ref{lm:suff} to obtain the following 
result.
\begin{prop} \label{prop:all_induced}
If\/ $\Gamma \in \mathcal{T}$, 
then each admissible permutation of\/ 
$\Gamma$ is induced by a graph automorphism.
\begin{proof}
Let $n := \abs{V}$, where 
$V$ is the vertex set of $\Gamma$,
and let $\sigma$ be any admissible permutation of $\Gamma$. 
We will compare the sizes of principal cut sets to those 
of non-principal ones. Let $v \in V$ be an arbitrary vertex. 
Then $v$ has at most $n-2$ neighbors in $\Gamma$, so 
$\abs{C(v)} \leq n-2$. 
Let $v,w \in V$ be two different vertices and set $A = 
\{v,w\}$. In $\overline{\Gamma}$ there are at least two edges which 
are not incident with $v$ or $w$, because otherwise we could cover 
all edges of $\overline{\Gamma}$ with three or less vertices. Hence 
in $\overline{\Gamma}$ there are at most $n-3$ edges between $A$ and 
$A^c$, so in $\Gamma$ we have 
$\abs{C(A)} \geq \abs{A} \cdot \abs{A^c} - (n-3) =  2 \cdot (n-2) - (n-3) = n-1$.
Finally, let $A \subseteq V$ be any 
subset with $3 \leq \abs{A} \leq \frac{n}{2}$, so that $C(A)$ is any cut 
set not considered yet. 
In $\overline{\Gamma}$ there are at most 
$n-1$ edges between $A$ and $A^c$, because there are $n-1$ edges in 
total. 
Hence, in $\Gamma$ we have the inequality 
$\abs{C(A)} \geq \abs{A} \cdot (n-\abs{A}) - (n-1)$. 
Now since the real function 
$x \mapsto x \cdot (n-x) - (n-1)$ 
is increasing over the interval 
$[3, \frac n 2]$, 
the right hand side of the inequality 
attains its global minimum at $\abs{A} = 3$. 
Hence, $\abs{C(A)} \geq 3 \cdot (n-3) - (n-1) = 2n-8 \geq n-1$, where the 
last inequality holds because of $n \geq 7$.

So far, we showed that any principal cut set has at most $n-2$ 
elements whereas any non-principal cut set has at least $n-1$ 
elements. 
In particular, no cut set is a cycle of length $4$, 
since principal cut sets are acyclic 
and non-principal cut-sets have more than $4$ elements. 
This shows that any admissible permutation 
$\sigma$ maps principal cut sets to principal cut sets, and in 
combination with Lemma \ref{lm:suff} we see that $\sigma$ must be 
induced by a graph automorphism.
\end{proof}
\end{prop}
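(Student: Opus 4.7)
The plan is to reduce to Lemma~\ref{lm:suff}. That lemma requires two hypotheses: that no cut set is a $4$\nbd cycle, and that the given admissible permutation $\sigma$ maps principal cut sets to principal cut sets. Since admissible permutations preserve cardinality (Lemma~\ref{lm:properties}), the cleanest way to secure both hypotheses simultaneously is a uniform size separation: I will show that $|C(v)| \leq n-2$ for every principal cut set, while $|C(A)| \geq n-1$ for every non-principal cut set, where $n = |V|$.

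For the upper bound, since $\overline{\Gamma}$ is a tree on $n \geq 7$ vertices, every $v \in V$ has $\deg_{\overline{\Gamma}}(v) \geq 1$, so $|C(v)| = \deg_\Gamma(v) \leq n-2$. For the lower bound I split into two cases. If $|A| = 2$, say $A = \{v,w\}$, then since $\tau(\overline{\Gamma}) > 3$ no third vertex $x$ can make $\{v,w,x\}$ a vertex cover of $\overline{\Gamma}$; hence at least two edges of $\overline{\Gamma}$ avoid both $v$ and $w$, so at most $(n-1)-2 = n-3$ edges of $\overline{\Gamma}$ run between $A$ and $A^c$. Consequently $|C(A)| \geq 2(n-2) - (n-3) = n-1$. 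If $3 \leq |A| \leq n/2$, then at most all $n-1$ edges of $\overline{\Gamma}$ cross between $A$ and $A^c$, giving $|C(A)| \geq |A|(n-|A|) - (n-1)$; the function $x \mapsto x(n-x)$ is increasing on $[3, n/2]$, so the minimum is attained at $|A|=3$, yielding $|C(A)| \geq 3(n-3)-(n-1) = 2n-8 \geq n-1$ because $n \geq 7$.

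Combining these bounds, principal cut sets are strictly smaller than all other cut sets, so any admissible $\sigma$ permutes the principal cut sets. Moreover no cut set can be a $4$\nbd cycle: principal cut sets form stars in $\Gamma$ and are acyclic, while non-principal cut sets have at least $n-1 \geq 6$ edges. Both hypotheses of Lemma~\ref{lm:suff} are thus satisfied, and $\sigma$ is induced by a graph automorphism of $\Gamma$.

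The main obstacle I foresee is getting the lower bound for $|A| = 2$ correct: the estimate relies precisely on $\tau(\overline{\Gamma}) > 3$ being used to rule out a vertex cover of the form $\{v,w,x\}$, and one has to verify carefully that this produces \emph{two} independent edges of $\overline{\Gamma}$ missing $\{v,w\}$ (rather than merely one). Once this is in place, the arithmetic for $|A| \geq 3$ is routine and the bound $n \geq 7$ is exactly what makes everything tight.
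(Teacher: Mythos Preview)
Your proposal is correct and follows essentially the same route as the paper's proof: the same upper bound $|C(v)|\leq n-2$, the same two-case lower bound using $\tau(\overline{\Gamma})>3$ for $|A|=2$ and the crude edge count for $3\leq|A|\leq n/2$, and then the appeal to Lemma~\ref{lm:suff}. The concern you raise in your final paragraph is easily resolved (and the paper handles it the same way): if at most one edge of $\overline{\Gamma}$ avoided $\{v,w\}$, then $\{v,w\}$ together with one endpoint of that edge would be a vertex cover of size $\leq 3$; independence of the two missing edges is not needed for the count.
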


The previous explanations and Proposition \ref{prop:all_induced} show 
that any graph $\Gamma=(V,E)$ 
of the class $\mathcal{T}$ leads to a $\abs{E}$-dimensional 
faithful real representation of the 
elementary abelian group $C\Gamma$, where the stabilizer of $\AGL(P)$ 
of any full-dimensional orbit polytope $P$ at any vertex 
is isomorphic to $\Aut(\Gamma)$. 
In particular, we see that $C\Gamma$ is generically 
closed with respect to this representation if and only if 
$\Aut(\Gamma) = 1$, i.e. if $\Gamma$ is asymmetric. The following 
lemma shows that there are enough asymmetric graphs in 
$\mathcal{T}$.

\begin{lemma}
For all natural numbers $n \geq 8$ there is a connected asymmetric 
graph $\Gamma \in \mathcal{T}$ with $\abs{V(\Gamma)} = n$.
\begin{figure}[ht]
\begin{tikzpicture}
\tikzstyle{every node}=[draw,circle,fill=white,minimum size=4pt, 
inner sep=0pt]
\draw (1,0) -- (1,1) node {};
\draw[dotted] (3,0) -- (4,0) -- (5,0);
\draw (-1,0) node {} -- (0,0) node {} -- (1,0) node {} -- (2,0) node 
{} -- (3,0) node {}  (5,0) node {} -- (6,0) node {};
\end{tikzpicture}
\caption{Asymmetric tree whose complement is in $\mathcal{T}$.}
\label{fig:graphs}
\end{figure}
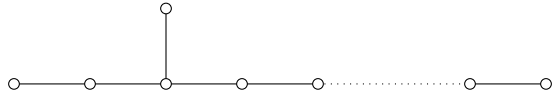
\begin{proof}
It is obvious that the tree shown in Figure \ref{fig:graphs} is 
asymmetric and that its edges cannot be covered by 3 or less 
vertices, if the tree has $8$ or more vertices. 
Hence its complement, which must also be asymmetric, lies 
in $\mathcal{T}$. 
It is also clear that the complement is connected.
\end{proof}
\end{lemma}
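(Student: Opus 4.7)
The plan is to exhibit, for each $n\geq 8$, an explicit asymmetric tree $T_n$ on $n$ vertices with $\tau(T_n) > 3$; then $\Gamma_n := \overline{T_n}$ will be the desired graph in $\mathcal{T}$, provided we also verify that $\Gamma_n$ is connected. Concretely, we let $T_n$ consist of a path $v_1, v_2, \dotsc, v_{n-1}$ together with a pendant vertex $v_0$ attached to $v_3$, as suggested by Figure~\ref{fig:graphs}. By construction, $\abs{V(\Gamma_n)} = n \geq 7$ and $\overline{\Gamma_n}$ is a tree; since $\Aut(\Gamma_n)=\Aut(T_n)$, it remains to check asymmetry of $T_n$, the vertex-cover bound $\tau(T_n) > 3$, and connectedness of $\Gamma_n$.

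For asymmetry, we first observe that $v_3$ is the unique vertex of degree three in $T_n$, so it is fixed by every automorphism. The three leaves $v_0,v_1,v_{n-1}$ sit at distances $1$, $2$, and $n-4$ from $v_3$; for $n\geq 8$ these three numbers are pairwise distinct, so each leaf is fixed as well. Walking outward from $v_3$ along the two branches of the path, every interior vertex is then uniquely determined by its distance from $v_3$, hence is fixed, so $\Aut(T_n)$ is trivial.

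For the vertex-cover bound we split into two cases. When $n\geq 9$, the induced subpath on $\{v_1,\dotsc,v_{n-1}\}$ already requires a vertex cover of size $\lfloor (n-1)/2\rfloor \geq 4$, and any vertex cover of $T_n$ restricts to a vertex cover of this subpath, so $\tau(T_n)\geq 4$. For $n=8$, a short finite check is needed: any vertex cover of $T_8$ must contain at least one vertex from each of the three pairwise disjoint pairs $\{v_1,v_2\}$, $\{v_6,v_7\}$ and $\{v_0,v_3\}$ (forced by the edges $v_1v_2$, $v_6v_7$, $v_0v_3$), and running through the eight possible triples of such choices one verifies that none of them covers both of the middle edges $v_4v_5$ and $v_5v_6$.

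Finally, connectedness of $\Gamma_n$ is standard: for any tree $T$ on $n\geq 5$ vertices the complement is connected, since two vertices not adjacent in $T$ are directly joined in $\overline{T}$, while two vertices adjacent in $T$ have a common non-neighbor in $T$ (by a simple counting argument, as $T$ has only $n-1$ edges). The genuinely delicate step is the $n=8$ case of the vertex-cover bound, which explains why the lemma requires $n\geq 8$ rather than $n\geq 7$: the analogous seven-vertex tree already admits $\{v_2,v_3,v_5\}$ as a vertex cover of size~$3$.
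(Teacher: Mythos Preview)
Your approach is exactly that of the paper: you use the same explicit family of trees (a path with a single pendant attached at the third vertex) and verify asymmetry, the vertex-cover bound, and connectedness of the complement. The paper simply declares all three properties ``obvious,'' so your write-up is a fleshed-out version of the same argument.

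One genuine slip: your connectedness claim, as stated, is false. It is not true that the complement of \emph{every} tree on $n\geq 5$ vertices is connected --- the star $K_{1,n-1}$ is a counterexample, since its complement leaves the centre isolated. The ``simple counting argument, as $T$ has only $n-1$ edges'' does not go through in general. For the particular trees $T_n$ the conclusion is fine, because $T_n$ has maximum degree~$3$: for any edge $uv$ of $T_n$ one has $\abs{N(u)\cup N(v)}\leq \deg(u)+\deg(v)\leq 5 < n$, so a common non-neighbour of $u$ and $v$ exists. Restrict the claim to $T_n$ (or to trees of bounded maximum degree) and your argument is complete.

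As a minor simplification, note that for $n=8$ the edges $v_1v_2$, $v_0v_3$, $v_4v_5$, $v_6v_7$ form a matching of size~$4$, giving $\tau(T_8)\geq 4$ directly without the eight-case check.
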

The only admissible permutation of the graph 
in Figure \ref{fig:graphs} with $7$ vertices is the identity, too,
as one can easily check with a computer. 
Unfortunately, its 
complement's edges can be covered by $3$ vertices, hence the graph 
does not belong to $\mathcal{T}$ and we cannot apply Proposition 
\ref{prop:all_induced}.
 
Now, for any connected graph $\Gamma=(V,E)$ 
with $n+1$ vertices, the cut space $C\Gamma$ 
is an elementary abelian $2$\nbd group of order
$2^n$ (by Lemma~\ref{l:cutsets}~\ref{obs:dim_cg}),
and the cut polytope of $\Gamma$ has dimension
$\abs{E}$ and can be viewed as an 
orbit polytope of this group.
In particular, for each $n\geq 6$ our construction
yields an orbit polytope of an elementary abelian
$2$\nbd group of order $2^n$, 
which is also the affine symmetry group of the polytope.
The polytope has dimension 
$\abs{E} = \frac{n(n+1)}{2}-n = \frac{n(n-1)}{2}$
for the graphs of the last lemma.
Together with Example~\ref{exp:e2group} and Lemma~\ref{l:ea234}, 
we get the following result.
\begin{thm}\label{t:elab2smallsym}
The elementary abelian $2$-group of order $2^n$ 
is the affine symmetry group 
of one of its orbit polytopes if and only if $n \notin \{2,3,4\}$.
\end{thm}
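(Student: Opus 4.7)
The plan is to split the biconditional into its two halves and handle them separately. The forbidden direction — for $n \in \{2,3,4\}$, no orbit polytope of $\GF{2}^n$ has affine symmetry group equal to $\GF{2}^n$ itself — is already the content of Lemma~\ref{l:ea234}. The key point in that lemma is the counting inequality $\binom{2^n-1}{d} < \abs{\GL(n,2)}$ for $n \leq 4$ and every admissible $d$, which forces each ideal character of $\GF{2}^n$ to have a nontrivial stabilizer in $\Aut(\GF{2}^n) = \GL(n,2)$, and such a stabilizer induces affine symmetries of the orbit polytope beyond those from $\GF{2}^n$. Since every orbit polytope of $\GF{2}^n$ is (up to affine $G$\nbd equivalence) full-dimensional in its affine hull and comes from an ideal character, this rules out $\AGL(P) = \GF{2}^n$ in these three cases, and I would invoke the lemma directly.

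For the positive direction I would produce, for each $n \notin \{2,3,4\}$, an explicit orbit polytope $P$ with $\AGL(P) \cong \GF{2}^n$. The cases $n=0$ (a point) and $n=1$ (a segment) are trivial. For $n=5$ I would point to Example~\ref{exp:e2group}: the given $12 \times 5$ matrix $C$ over $\GF{2}$ determines a faithful representation whose ideal character is $\gamma(x) = 12 - 2w(Cx)$, and Theorem~\ref{t:reppolyaffsyms} combined with a GAP computation certifies that no nontrivial permutation of $\GF{2}^5$ fixing $0$ preserves all pairwise values $\gamma(\pi(y) - \pi(x))$. For $n \geq 7$ I would use the graph-theoretic machinery: starting from the asymmetric tree on $n+1 \geq 8$ vertices in Figure~\ref{fig:graphs}, form its complement $\Gamma \in \mathcal{T}$, and take the cut polytope of $\Gamma$. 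By Lemma~\ref{l:cutsets}\,(i) the cut space $C\Gamma$ is isomorphic to $\GF{2}^n$; by Proposition~\ref{prop:all_induced} every admissible permutation is induced by $\Aut(\Gamma)$, which is trivial because $\Gamma$ is asymmetric. Hence the stabilizer of the identity cut set in $\AGL(P)$ is trivial, giving $\abs{\AGL(P)} = \abs{C\Gamma}$ and thus $\AGL(P) = C\Gamma$.

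The remaining case $n = 6$ is the delicate one: it corresponds to a graph on $7$ vertices, but the asymmetric tree on $7$ vertices has edge-cover number $3$, so its complement just fails to belong to $\mathcal{T}$ and Proposition~\ref{prop:all_induced} does not apply directly. Here I would rely on the authors' observation that a direct computer check shows the only admissible permutation of this $7$-vertex graph is the identity; a purely combinatorial alternative would sharpen the size estimates in the proof of Proposition~\ref{prop:all_induced} to also handle the few additional non-principal cut sets of size at most $n-2$ that appear in this borderline regime. Either route produces a cut polytope of $\GF{2}^6$ with trivial vertex stabilizer in $\AGL$, hence with affine symmetry group exactly $\GF{2}^6$.

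The main obstacle is precisely this small-$n$ gap. The clean graph-theoretic construction only begins to produce non-trivial examples once $n \geq 7$, so the two smallest positive cases $n = 5$ and $n = 6$ must be handled by hand — one by an explicit $12$-dimensional representation polytope, the other by a computer-assisted verification on a $7$-vertex graph. Neither verification is conceptually deep, but both are indispensable for making the dichotomy $n \notin \{2,3,4\}$ sharp, and they are the only steps that cannot be absorbed into the general theory developed in Sections~\ref{sec:groupalg}--\ref{sec:reppts_groupalg} and the preceding propositions of this section.
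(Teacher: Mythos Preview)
Your proposal is correct and follows essentially the same route as the paper: Lemma~\ref{l:ea234} for $n\in\{2,3,4\}$, Example~\ref{exp:e2group} for $n=5$, the computer check on the $7$-vertex graph for $n=6$, and the cut-polytope construction via Proposition~\ref{prop:all_induced} together with the asymmetric trees of Figure~\ref{fig:graphs} for $n\geq 7$. The only addition is that you make the trivial cases $n=0,1$ explicit, which the paper leaves implicit.
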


Finally, we consider permutation polytopes.
Let $G\leq \Sym_d$ be a permutation group and let
$D\colon G\to \GL(d,\reals)$ be the corresponding representation
as a group of permutation matrices.
Then $P(D)$ is called 
a \defemph{permutation polytope}, also written as $P(G)$.
In their paper ``On permutation polytopes''~\cite{BHNP09}, 
Baumeister et~al.\
point out that left and right multiplications with elements
of $D(G)$ induce affine automorphisms of $P(G)$ and that thus
the affine automorphism group of $P(G)$ is bigger than $G$
for non-abelian groups. They conjecture this also to be true
for abelian groups $G$ of order $\abs{G}>2$.
Now if $G$ contains elements $g$ with $g^2\neq 1$,
then transposition of matrices yields an additional
affine symmetry of $P(G)$, thereby verifying the conjecture for these
groups.\footnote{
In fact, Thomas Rehn~\cite[Theorems~A.2 and~A.4]{rehn11dama}   
has shown
that for abelian groups
of exponent greater than $2$,
the affine symmetry group is usually much larger than $\abs{G}$.
But notice that the proof of Lemma~A.7 and thus of
Theorem~A.2 for elementary abelian $2$\nbd groups is wrong.}

However, for elementary abelian $2$\nbd groups 
of order $\abs{G}\geq 2^5$, 
the conjecture is false. 
This follows from Theorem~\ref{t:elab2smallsym}
and the following simple observation
(which completes the proof of Theorem~\ref{introt:elab2}
from the introduction):
\begin{lemma}\label{l:elab2repperm}
Let $G$ be an elementary abelian $2$\nbd group. 
Then every orbit polytope of $G$
is affinely $G$\nbd equivalent to a permutation polytope.
\end{lemma}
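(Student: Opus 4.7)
The plan is to reduce the claim to a statement about representation polytopes and then build an explicit permutation representation of $G$ with the same module-theoretic content. First, by Corollary~\ref{c:realidealgroups}, since $G$ is abelian, every orbit polytope of $G$ is affinely $G$\nbd equivalent to a representation polytope $P(D)$ for some real representation $D$ of $G$. So it suffices to prove that every such $P(D)$ is affinely $G$\nbd equivalent to a permutation polytope.

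Since $G$ is elementary abelian of exponent $2$, every irreducible real representation of $G$ is one-dimensional and given by a linear character $\lambda\colon G\to \{\pm 1\}$. Let $\lambda_1,\dotsc,\lambda_k$ be the distinct non-trivial irreducible constituents of $D$. For each $i$, the kernel $N_i=\ker\lambda_i$ has index two in $G$, and the action of $G$ on the coset space $G/N_i$ is a permutation representation of degree two whose character equals $1_G+\lambda_i$. I will take $\pi$ to be the direct sum of these $k$ coset permutation representations. Then $\pi$ is a permutation representation of $G$, and the set of distinct non-trivial irreducible constituents of $\pi$ coincides with that of $D$.

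To conclude, I will invoke the characterization noted after Theorem~\ref{t:reppolyaffsyms}: two representations of $G$ yield affinely $G$\nbd equivalent representation polytopes if and only if they have the same set of non-trivial irreducible constituents. This immediately gives that $P(D)$ is affinely $G$\nbd equivalent to $P(\pi)$, which is by construction a permutation polytope. One can also make this explicit via Theorem~\ref{t:concretesplitting}, which writes each side as $P(G,f)$ for a central idempotent $f=\sum_i e_{\lambda_i}\in \reals G$; the two resulting idempotents agree up to a possible summand $e_1=\tfrac{1}{\abs{G}}\sum_g g$, and adding or removing this summand merely translates $P(G,f)$ by $e_1$ because $ge_1=e_1$ for every $g\in G$. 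There is no genuine obstacle here: the key point is that each non-trivial linear character of $G$ arises as the unique non-trivial constituent of a two-dimensional coset permutation representation, and Corollary~\ref{c:realidealgroups} lets us match any orbit polytope of $G$ to a representation polytope with a prescribed constituent set.
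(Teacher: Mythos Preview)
Your proof is correct. Both your argument and the paper's begin by invoking Corollary~\ref{c:realidealgroups} to reduce to a representation polytope $P(D)$, and both finish by producing a permutation representation with the same non-trivial constituents as $D$.

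The difference is in how the permutation representation is built. You work character-theoretically: list the distinct non-trivial constituents $\lambda_i$, form the coset actions on $G/\ker\lambda_i$, and then appeal to the characterization after Theorem~\ref{t:reppolyaffsyms} that representation polytopes are determined up to affine $G$\nbd equivalence by their non-trivial constituent set. The paper instead observes directly that, once $D$ is diagonalized with eigenbasis $b_1,\dotsc,b_d$, the group $D(G)$ permutes the $2d$ vectors $\{\pm b_1,\dotsc,\pm b_d\}$; the resulting permutation representation $D_1$ is visibly similar to $I\oplus D$, from which the affine $G$\nbd equivalence of $P(D_1)$ and $P(D)$ is immediate without invoking the constituent characterization. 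The two constructions are in fact the same up to trivial summands (each orbit $\{b_i,-b_i\}$ realizes the coset action $G/\ker\lambda_i$), so the difference is one of packaging: your route is cleaner if one is already thinking in terms of characters, while the paper's is a shade more self-contained.
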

\begin{proof} 
  An orbit polytope of an abelian group is 
  affinely $G$\nbd equivalent to a representation polytope
  (Corollary~\ref{c:realidealgroups}).
  Let $D$ be a representation of $G$.
  The abelian group $ D(G)$ is simultaneously diagonalizable.
  Let $\{b_1,\dotsc,b_d\}$ be a basis of eigenvectors.
  Then $D(G)$ permutes the set
  $\{\pm b_1, \dotsc, \pm b_d \}$,
  with $d$ orbits of length $2$.
  The corresponding permutation representation $D_1$
  of $G$ is similar to 
  \[ \begin{pmatrix}
        I & 0 \\
        0 & D
     \end{pmatrix},
  \]
  where $I$ is the $d\times d$ identity matrix.
  It follows that the representation polytopes
  of $D_1$ and $D$ are affinely equivalent as $G$\nbd sets.
\end{proof}

\section{Open questions and conjectures}
\begin{question}\label{qu:genclosedmods}
  Fix a finite group $G$.
  For which $\reals G$\nbd modules $V$
  is the image of $G$ in $\GL(V)$ 
  generically closed?
  That is, when is it true that a 
  generic orbit polytope in $V$ has no additional affine symmetries?
\end{question}
This was posed as an open question in the first version of this paper.
In the time since we first submitted our paper, 
we found an answer to Question~\ref{qu:genclosedmods}
in terms of the decomposition of $V$ into irreducible submodules.
The answer is slightly technical and will be explained in detail
in a forthcoming paper.
We now briefly survey the results concerning Question~\ref{qu:genclosedmods}
that we obtained in the present paper.

We have seen in Theorem~\ref{t:absirr_closed}
that if $V$ is absolutely irreducible
(that is, $\enmo_{\reals G}(V)=\reals$),
then the image of $G$ in $\GL(V)$ is generically closed.
If $V=\reals G$ is the regular module
(or the regular module minus the trivial module),
then the generic orbit polytope is a simplex
and all permutations of the vertices come from
affine symmetries.
From the results of Section~\ref{sec:reppts_groupalg}
it follows that if
$m_i \in \{0,d_i\}$ for all $i$,
where $m_i$ and $d_i$ are the multiplicities of the simple module 
$S_i$ in $V$ and in $\reals G$, respectively, 
then the full-dimensional orbit polytopes in $V$ are in fact 
representation polytopes.
We have already seen that these polytopes 
have a big group of affine symmetries, except for 
elementary abelian $2$\nbd groups.

A larger class of modules such that the affine symmetry group
of a generic orbit polytope ``grows'' can be constructed as follows.
Let $V$ be a cyclic $\reals G$\nbd module, 
so that $V$ contains full-dimensional orbit polytopes.
We decompose $V$ into an ``ideal component''
and a ``non-ideal component''
as follows:
We may write
\[ V \iso m_1 S_1 \oplus \dotsb \oplus m_r S_r
     \leq  d_1S_1 \oplus \dotsb \oplus d_rS_r \iso \reals G
\]
where the $S_i$ are the different simple $\reals G$\nbd modules
up to isomorphism,
and  $0\leq m_i \leq d_i$.
Then the \defemph{ideal component} $I$ of $V$
is the sum of those $m_iS_i$ such that $m_i=d_i$,
and the \defemph{non-ideal component} $L$
is the sum of the $m_iS_i$ with $m_i < d_i$.
Thus $V\iso I \oplus L$, where $I$ is an ideal of $\reals G$,
and $L$ is a left ideal where each simple constituent occurs
with strictly smaller multiplicity than in the group algebra.

Let $N$ be the kernel of $G$ acting on $L$.
Let $\alpha$ be a group automorphism of $G$ such that
it maps each coset of $N$ in $G$ onto itself,
and also the ideal $I$.
(For example, conjugation with any $n\in N$
has this property.)
Then $\alpha$ as an algebra automorphism of $\reals G$
maps $I\oplus L$ onto itself, and leaves the elements of $L$
fixed.
A generic orbit polytope $P(G,v)\subseteq V$
has thus an affine symmetry sending
$gv$ to $\alpha(g)v$.

A specific example would be the dihedral group $D_6$ of order
$12$ with $V = 2S \oplus T$,
where $S= \reals^2$ denotes the natural module of $D_6$ acting
as the group of isometries fixing a hexagon, 
and $T$ is the natural module of $D_3\iso S_3$, 
viewed as a module of $D_6$ via the isomorphism
$D_6/\Z(D_6)\iso S_3$.
Then the generic orbit polytopes have a symmetry group of
order $24$.
The automorphism $\alpha$ of $D_6$ sending each rotation to itself
and each reflection $s$ to $sz$, where $z$ is the central rotation 
of order $2$, yields an additional affine symmetry of all generic 
orbit polytopes of $D_6$ in this particular representation.

From the results in our forthcoming paper it will follow that
when $G$ acts faithfully on the non-ideal component $L$,
then the generic orbit polytopes will have no additional symmetries.
This was stated as a conjecture in the first version of this paper.

In the first version of this paper, we also made some remarks
on a question of Babai.
Babai~\cite{Babai77} classified the finite groups 
which are isomorphic to the orthogonal symmetry group
of an orbit polytope, and
asked the question which abstract finite groups occur as
affine symmetry group of an orbit polytope.
In our forthcoming paper, we will answer this question.
In particular, it turns out that the only groups that are not 
isomorphic to the affine symmetry group of an orbit polytope,
but are isomorphic to the orthogonal symmetry group of an orbit polytope,
are the elementary abelian groups of orders $4$, $8$ and $16$.
This was stated as a conjecture in the previous version of this paper.

We have said nothing in this paper about the combinatorial symmetry group of 
orbit polytopes.
A combinatorial symmetry of a polytope $P$ is a permutation of its vertices
which maps faces of $P$ to faces of $P$.
Already the example of the dihedral group $D_4$ (or $D_n$) shows that
the combinatorial symmetry group of an orbit polytope is usual
bigger than the affine symmetry group.
The generic orbit polytope of $D_4$ is combinatorially an
octagon.
There are, however, special points such that the orbit polytope is a regular
octagon, and for these points, the combinatorial and the 
affine symmetry groups agree.
We conjecture that this is a general phenomenon:
\begin{conjecture}
  Let $G\leq \GL(d,\reals)$ be finite
  and $P(G,v)$ a full-dimensional orbit polytope.
  Then there is a point $v_0$ such that
  $P(G,v)$ and $P(G,v_0)$ are combinatorially equivalent
  and such that all combinatorial symmetries of $P(G,v_0)$
  are affine symmetries of $P(G,v_0)$.
\end{conjecture}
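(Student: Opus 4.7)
My plan is to produce $v_0$ as a fixed point of a natural action of the combinatorial symmetry group $\Gamma := \Aut(P(G,v))$ on the combinatorial chamber of $v$.

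First I would set up the following framework. View $\Gamma$ as a finite subgroup of $\Sym(Gv)$ containing $\widehat{G} := \AGL(P(G,v))$, and let $C \subseteq \reals^d$ denote the combinatorial chamber of $v$, that is, the open semi-algebraic set of those $v' \in \reals^d$ for which the bijection $gv \mapsto gv'$ extends to a combinatorial isomorphism $P(G,v) \to P(G,v')$. Within $C$ the combinatorial symmetry group is independent of $v'$ and equals $\Gamma$, while the affine symmetry group $\AGL(P(G,v'))$ can only grow on proper subvarieties, by a chamber-wise version of the arguments in Section~\ref{sec:generic}. Thus the conjecture is equivalent to the existence of $v_0 \in C$ with $\Gamma \subseteq \AGL(P(G,v_0))$.

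The central step is to construct, for each $\sigma \in \Gamma$, a self-map $\tau_\sigma \colon C \to C$ whose joint fixed-point set consists precisely of those $v_0$ satisfying $\Gamma \subseteq \AGL(P(G,v_0))$. For $\sigma \in \widehat{G}$ the map $\tau_\sigma$ is the restriction of the linear map in $\GL(d,\reals)$ realizing $\sigma$. For $\sigma \in \Gamma \setminus \widehat{G}$ the definition is more delicate: $\tau_\sigma(v')$ should be the unique $v'' \in C$ making the relabeled vertex configuration $(\sigma(g)v')_{g \in G}$ $G$-equivariantly affinely equivalent to $P(G,v'')$, with existence and uniqueness coming from Theorem~\ref{t:concretesplitting} combined with the character-rigidity statement in Theorem~\ref{t:genericsimilar}. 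Once $\Gamma$ acts on $C$, a fixed point $v_0$ can be obtained by averaging any $v' \in C$ over its $\Gamma$-orbit with respect to a $\Gamma$-invariant convex structure.

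The main obstacle is making the maps $\tau_\sigma$ with $\sigma \notin \widehat{G}$ genuinely well-defined, continuous on all of $C$, and chamber-preserving: because no linear map on $\reals^d$ realizes such a $\sigma$, the combinatorially relabeled polytope must be re-expressed as a $G$-orbit polytope through the group-algebra machinery of Section~\ref{sec:groupalg}, and it is not obvious that this re-expression depends continuously on $v'$ or stays inside $C$. I would try to circumvent the difficulty by first working not in $\reals^d$ but in the convex polyhedral cone parametrizing all labeled polytopes of the given combinatorial type — on which $\Gamma$ acts tautologically by relabeling, so that a $\Gamma$-fixed polytope is elementary to obtain by averaging — and then descending to $\reals^d$ by using the uniqueness of the cyclic $\reals G$-module structure supporting the averaged polytope, via Theorem~\ref{t:mod_fdpolytop}. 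A secondary concern is that the averaged point might land on the boundary of the chamber rather than in its interior; ruling this out appears to be the geometric heart of the conjecture and may require a finer star-convexity argument for $C$ around its most symmetric point, as suggested by the explicit behavior of the dihedral examples in the introduction.
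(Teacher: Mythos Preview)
The statement you are attempting to prove is stated in the paper as an open \emph{conjecture}, not as a theorem; the paper offers no proof, only supporting examples (the dihedral groups, the icosahedron as an orbit polytope of the tetrahedral rotation group) and a reference to McMullen's result for combinatorially regular polytopes. So there is no ``paper's own proof'' to compare against, and your proposal should be read as an attempted resolution of an open problem.

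Your strategy has a genuine gap at its core. The object you call ``the convex polyhedral cone parametrizing all labeled polytopes of the given combinatorial type'' does not exist in general: by Mn\"ev's universality theorem the realization space of a combinatorial polytope (even modulo affine equivalence) can be an essentially arbitrary semi-algebraic set, and in particular need not be convex or even connected. Hence the averaging step --- producing a $\Gamma$-fixed point by taking a barycenter of a $\Gamma$-orbit --- has no reason to land inside the realization space. This is not a technicality: the paper explicitly cites Bokowski--Ewald--Kleinschmidt, who built a polytope whose combinatorial symmetry group exceeds the affine symmetry group of \emph{every} realization, so for general polytopes the fixed point you seek simply fails to exist. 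Any valid argument must therefore exploit the orbit-polytope hypothesis in an essential way, and your proposal does not do so: the references to Theorems~\ref{t:concretesplitting}, \ref{t:genericsimilar}, and~\ref{t:mod_fdpolytop} only control the $\reals G$-module structure, not the face lattice, and the construction of the maps $\tau_\sigma$ for $\sigma \notin \widehat{G}$ is, as you yourself note, not actually carried out. The ``star-convexity of $C$ around its most symmetric point'' that you invoke at the end is precisely the content of the conjecture, not a tool available to prove it.
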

An interesting example in case is the rotation group $T$
of the tetrahedron in dimension $3$.
This group is isomorphic to the alternating group $A_4$
and has order $12$.
The generic orbit polytope is an icosahedron,
but of course a skew icosahedron having only $T$
as affine symmetry group.
However, for special points the orbit polytope is a regular
icosahedron with symmetry group of order $120$.
This is also the combinatorial symmetry group of the icosahedron.
We get such a special point by choosing the midpoint of a triangle
from the tessellation of the $2$-sphere associated to the reflection group
of the regular tetrahedron.
(As the referee has pointed out, 
this construction of the icosahedron 
is analogous to the construction of the
\emph{snub cube} from the rotation group of the cube
described by Coxeter~\cite[pp.~17--18]{CoxeterRCP91}.
This construction is a variant of Wythoff's construction.)
If the tetrahedron we begin with has rational coordinates,
then the points such that the orbit polytope is a regular icosahedron
all have irrational coordinates, 
because the $3$\nbd dimensional representation of the icosahedron group is not 
realizable over the rational numbers.

Bokowski, Ewald and Kleinschmidt~\cite{BokowskiEK84} constructed 
the first example of a polytope such that its combinatorial symmetry group
is bigger than the affine symmetry group of all possible
realizations. 
Other examples have been constructed since then, but none of them,
to the best of our knowledge, is an orbit polytope.
On the positive side, McMullen~\cite{mcmullen67} has shown
that a combinatorially regular polytope is combinatorially equivalent
to a regular polytope, and for such a polytope,
all combinatorial symmetries come from orthogonal symmetries. 

If $P(G)$ is a representation polytope belonging to the group 
$G\leq \GL(d,\reals)$, then $P(G)$ 
is affinely equivalent to every other
orbit polytope $P(G,A)$ which generates the same subspace of the matrix space
as $G$.
Thus the following conjecture would follow from the last one:
\begin{conjecture}
  The combinatorial and the affine symmetry group of representation polytopes
  agree.
\end{conjecture}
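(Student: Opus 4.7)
The approach is to take Theorem~\ref{t:reppolyaffsyms} as the central criterion. By Lemma~\ref{l:reppol_iso} one may work with the standard realization $P(D) = P(G,I)$. Every affine symmetry is a combinatorial symmetry, so only the converse is at stake: given a combinatorial symmetry $\pi$ of $P(D)$, we must show
\[
  \gamma\!\left(\pi(g)^{-1}\pi(h)\right) = \gamma(g^{-1}h)
  \quad\text{for all } g, h \in G,
\]
where $\gamma = \sum_{\chi\in\Irr D}\chi(1)\chi$. Equivalently, by Corollary~\ref{c:affsymchar}, $\pi$ must preserve the edge-color function $w_{g,h} = (1/\abs{G})\gamma(g^{-1}h)$ of the labeled graph from Corollary~\ref{c:bdss}.

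First I would normalize $\pi$ using the affine symmetries supplied by Proposition~\ref{p:reppol_bigsym}: composing with a suitable left multiplication (and, if useful, a right multiplication and the inverse involution from~\ref{it:invsym}) reduces to the case $\pi(1) = 1$. The real task is then to extract the edge-colors $w_{g,h}$, or equivalently the values of $\gamma$, from purely combinatorial (face-lattice) data. A natural attempt is to introduce combinatorial invariants of a pair $\{g,h\}$ of vertices---for instance, counts of $2$- or $3$-faces through the segment $[D(g),D(h)]$, or local $f$-vectors---and to show that these are fine enough to separate vertex pairs according to their $\gamma$-value. Combined with the transitivity of the left regular action, this would force a combinatorial symmetry to preserve $w_{g,h}$, hence to be affine.

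The main obstacle is exactly this reconstruction step: in general, face-lattice data does not determine metric quantities such as edge-colors, as the dihedral examples at the start of the paper already demonstrate. The hope specific to representation polytopes is that the very rigid structure imposed by the simultaneous left and right regular actions, together with the inverse involution of Proposition~\ref{p:reppol_bigsym}\ref{it:invsym}, forces combinatorially equivalent pairs to share a common $\gamma$-value; a fine analysis according to the decomposition $\gamma = \sum \chi(1)\chi$ into irreducible constituents seems unavoidable. An alternative, more indirect route is via McMullen's realization cone: show that the affine symmetry group is locally constant on the subcone of realizations of a given combinatorial type, and then verify the statement at a distinguished orthogonal representative, where the transpose symmetry and character-theoretic arguments become available. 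Either strategy reduces the conjecture to the same delicate question of combinatorial rigidity of the edge-colored graph, which is where I expect the bulk of the work to be.
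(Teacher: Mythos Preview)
The statement you are trying to prove is stated in the paper as a \emph{conjecture}, not a theorem: the paper does not give a proof. What the paper offers is computational verification for all rational representations of groups of order at most~$31$, together with the remark that for the Birkhoff polytope the known facet description makes it possible to check directly that every combinatorial symmetry is one of the affine symmetries described in Proposition~\ref{p:reppol_bigsym}. So there is no paper-proof to compare your proposal against.

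Your proposal is, in fact, honest about this: you correctly isolate the crux of the problem---recovering the values of $\gamma(g^{-1}h)$, equivalently the edge-color matrix of Corollary~\ref{c:bdss}, from the face lattice alone---and you acknowledge that face-lattice data does not in general determine such metric quantities. The normalization $\pi(1)=1$ using Proposition~\ref{p:reppol_bigsym} is a natural first step, and so is looking for combinatorial invariants of vertex pairs that are constant on $\gamma$-level sets. But neither of the two routes you sketch (local face counts, or a realization-cone argument) comes with an actual mechanism that forces combinatorially equivalent pairs to share a $\gamma$-value; you are describing a program, not a proof. In short: your proposal is not wrong, but it is not a proof either---it identifies exactly the gap that makes this a conjecture rather than a theorem in the paper.
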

We have verified this for all rational representations of groups 
of order $\leq 31$
using GAP~\cite{GAP475} and, in particular, Dutour Sikiri\'c's GAP-package
\texttt{polyhedral}~\cite{dutour13_polyh}.
(Both \texttt{polymake}~\cite{GawrilowJoswig00}
 and \texttt{polyhedral} can compute only with
 polytopes with rational vertices or vertices in quadratic extension fields.)
Another example is the Birkhoff polytope,
the representation polytope of the natural
representation of the symmetric group $S_n$.
Using the known facet structure of the Birkhoff polytope,
it is not too difficult to show that its combinatorial symmetry
group only contains the symmetries described in
Proposition~\ref{p:reppol_bigsym}, 
which are of course affine.

Finally, we mention the following question, which was already
posed by Onn~\cite{onn93} (in a slightly different form):
\begin{question}
  For which groups $G\leq \GL(d,\reals)$ is it true that all 
  generic orbit polytopes are combinatorially equivalent?
\end{question}
For example, this is true when $G$ is a finite 
reflection group~\cite[Theorem~14.1]{borovikmirrors}.
(See also~\cite[Proposition~3]{DutourSikiricEllis09}.)
As mentioned before, Onn~\cite{onn93} showed by an example that in general,
different generic orbit polytopes are not combinatorially 
equivalent. 
Onn's example is multiplicity free.
On the other hand, we have seen in this paper that when each
irreducible representation occurs in a representation with the same 
multiplicity as in the regular representation, or not at all,
then all generic orbit polytopes are even affinely equivalent.
(This follows from Proposition~\ref{p:allgeneric}, 
 since the orbit polytope of such a representation 
 is affinely $G$\nbd equivalent to a representation polytope.)

\section*{Acknowledgments}
We would like to thank Achill Schürmann and Mathieu Dutour Sikiri\'{c}
for many stimulating discussions. 
In particular, we acknowledge the efforts of Achill Schürmann  
who carefully read preliminary versions of this paper
and gave many useful hints to improve the exposition.
We are grateful to Mathieu Dutour Sikiri\'{c} also
for useful pointers to the literature and for his help with using his
GAP-functions in \texttt{polyhedral}~\cite{dutour13_polyh}.
Furthermore, we thank Christian Rosenke for his interest.
The idea of using complements of trees in Section~\ref{sec:elab2}
came up in conversations of the first author with him.
And we are grateful to Jan-Christoph Schlage-Puchta for communicating his proof
of Corollary~\ref{c:symgr_closed} to us (cf.~Remark~\ref{rem:inject}).

%
\printbibliography   
%

\end{document}